\documentclass{article}
\usepackage{graphicx} 
\usepackage{amsmath}
\usepackage{setspace}
\usepackage{amssymb}
\usepackage{amsthm}
\IfFileExists{yhmath.sty}{\usepackage{yhmath}}{} 
\usepackage{url}
\usepackage{amsfonts}
\usepackage[dvipsnames]{xcolor}
\usepackage{geometry}
\usepackage{tcolorbox}
\IfFileExists{algorithm.sty}{\usepackage{algorithm}}{
  \usepackage{float}
  \floatstyle{ruled}
  \newfloat{algorithm}{htbp}{loa}
  \floatname{algorithm}{Algorithm}
}
\usepackage{tikz}
\tikzset{
  leaf/.style={inner sep=0pt,font=\LARGE, outer sep=0pt},
  node/.style={inner sep=0pt,font=\normalsize, outer sep=0pt}
}

\tikzset{
  C/.style={
   minimum size=4mm, font=\Large,inner sep=0pt, outer sep=0pt,
   execute at begin node=\ensuremath{\times},
    path picture={
      \draw (path picture bounding box.center) circle (1.5mm);
    }
  }
}
\tikzset{
  S/.style={
    draw, rectangle, minimum size=5mm, inner sep=0pt, outer sep=0pt
  }
}
\tikzset{
  root/.style={
    node,
    execute at begin node=\ensuremath{\times}
  }
}
\tikzset{
  D/.style={
    leaf,
    execute at begin node=\ensuremath{\diamond}
  }
}
\tikzset{
  O/.style={
    leaf,inner sep=0pt,outer sep=0pt,
    execute at begin node=\ensuremath{\circ}
  }
}

\title{Generalized Feynman Graph Expansions for Lévy-Driven Pseudo-Differential Evolution Equations}
\author{Iskander O. Hamdiken$^1$\,\,\,\, Smii Boubaker$^1$\footnote{
corresponding author, and e-mail: boubaker@kfupm.edu.sa }\,\,\, 
\\$^1$ King Fahd University of Petroleum and Minerals\\Department of Mathematics\\Dhahran 31261, Saudi Arabia\\
 }

\theoremstyle{plain}
\newtheorem{theorem}{Theorem}[section]
\newtheorem{proposition}[theorem]{Proposition}
\newtheorem{lemma}[theorem]{Lemma}
\newtheorem{corollary}[theorem]{Corollary}

\theoremstyle{definition} 
\newtheorem{definition}[theorem]{Definition}
\newtheorem{assumption}[theorem]{Assumption}
\newtheorem{example}[theorem]{Example}

\theoremstyle{remark} 
\newtheorem{remark}[theorem]{Remark}

\date{\today}
\begin{document}

\onehalfspacing
\maketitle

\newcommand{\R}{\mathbb{R}}
\newcommand{\C}{\mathbb{C}}
\newcommand{\F}{\mathcal{F}}
\newcommand{\Sspace}{\mathcal{S}}
\newcommand{\Ssp}{\mathcal{S}'}
\newcommand{\N}{\mathbb{N}}
\newcommand{\Snorm}[3]{\left\|#1\right\|_{#2,#3}}
\newcommand{\pa}{\partial}
\newcommand{\dd}{\,\mathrm{d}}

\begin{center}
    \textbf{Abstract}
\end{center}

We study a class of Cauchy problems driven by pseudo-differential operators with Lévy-type symbols. We extend earlier results from exponential initial data to a broader set of initial conditions. We demonstrate the existence and uniqueness of the solution, which has an explicit convolution representation with the associated kernel. We then derive an asymptotic expansion of the solution, with coefficients expressed in terms of moments and, after taking logarithms, in terms of cumulants. These expansions are organized through generalized Feynman graph representations, providing a clear combinatorial description of both the solution and its logarithm. We also establish a Borel summability result under suitable conditions and recover the classical exponential case as a special instance. 
Finally, we illustrate the convolution kernel and solution through a one-dimensional Lévy-Gaussian example.

\textbf{Keywords:} Evolution equations, Feynman graph expansion, Lévy noise, Borel summability.

\section{Introduction}
Evolution equations model physical systems and processes evolving in time, describing the evolution of complex systems arising in physics, probability theory, and applied mathematics. Their mathematical formulation captures the dynamics of state function associated with various fields such as diffusion processes, quantum field theory, and astrophysics, see, e.g., \cite{lattanzi, Salmhofer, volpe}. 

In many situations, the evolution dynamics are represented by partial differential equations whose generator is given by an explicit representation, such as the Laplacian operator which appears in the Heat equation, see, e.g., \cite{cannon}. 

An important class of models is constructed by replacing differential operators by pseudo-differential operators characterized through their Fourier symbols, see, e.g., \cite{taylor}. Such operators can arise in the system evolving nonlocal dynamics, Lévy processes, anomalous diffusions and in general stochastic systems with jumps, see, e.g., \cite{Alb1, zabczyk, Sato}. 

In particular, asymptotic methods are effective tools for investigating the behavior of the solution beyond the classical semigroup techniques, see, e.g., \cite{Alb2, Alb4, Smii21}. Such expansions are usually divergent; however, they can be interpreted through suitable resummation techniques, such as Borel summability, see, e.g., \cite{Alb3, main,sokal}. 

Graphical techniques inspired by Feynman diagrams play a central role in perturbative analysis and stochastic quantization, see, e.g., \cite{ParisiWu, Salmhofer}. These methods provide a combinatorial framework for organizing moments, cumulants, and iteration terms appearing in the asymptotic expansions. 

These techniques were adopted to stochastic differential equations driven by Lévy space-time noise, see, e.g., \cite{schwinger, main, peccati, Smii14,Smii21}. In particular, the graph expansion developed in \cite{main} established a representation of convolution semigroups associated with Lévy statistics, which connects analytic expansion with combinatorial graph structures.

Such representations are essential when logarithmic expansions and cumulants are studied and partitions and connected graphs are used. 

The present work continues this line of research by considering a class of Cauchy problems driven by a pseudo-differential operator with Lévy type symbols. Our main objective is to generalize the work done in \cite{main} by extending the class of initial conditions and by reconstructing both the analytical and graphical perturbative expansions. 

More precisely, we establish existence and uniqueness results for the corresponding evolution equation, and derive its explicit convolution representation of the solution kernel. We obtain asymptotic expansion for the solution in terms of moments, and after a logarithmic transformation, in terms of cumulants. We then express the obtained expansion using generalized Feynman graphs. 

We will also study the summability property of the expansion.  Under suitable assumptions on the Lévy symbol and the initial data, we prove Borel summability result that extends previous studies on Gaussian and Lévy noise, see, e.g., \cite{Alb3, Smii21}. This provides a rigorous interpretation of the formal asymptotic series and recovers the classical exponential initial condition studied in \cite{main} as a particular case. Furthermore, the graphical approach we develop clarifies the roles of the connected and disconnected contributions in the cumulant expansion and establishes a direct correspondence between analytic coefficients and graph theoretic structures, see, e.g., \cite{bollobas, oystein, serre}. 

The paper is organized as follows. In Section 2, we introduce the mathematical setting, notation, and preliminary results required throughout this work. Section 3 is devoted to the derivation of the asymptotic expansion of the solution . In Section 4, we give its representation through generalized Feynman graphs. Section 5 analyzes the formal logarithm of the solution and its graphical interpretation in terms of connected graph structures and partitions. In Section 6, we establish the Borel summability of the formal series obtained previously. Finally, in Section 7, we recover the exponential case and provide an exact benchmark and the direct and logarithmic truncation procedures.

\section{Assumptions and Problem Setting}

Let $\Sspace(\R^d)$ be the Schwartz space with
seminorms
\begin{equation}
\|u\|_{\alpha,N}:=\sup_{x\in\R^d}(1+|x|)^N\,|\pa^\alpha u(x)|,
\qquad\quad N\in\N_0,
\end{equation}
where $\alpha=(\alpha_1,\dots,\alpha_d)\in\N_0^d=\Big\{\N \cup \{0\}\Big\}^d$ is a multi-index with
\begin{equation}
\pa^\alpha:=\frac{\partial^{|\alpha|}}{\partial x_1^{\alpha_1}\cdots \partial x_d^{\alpha_d}},
\qquad |\alpha|:=\alpha_1+\cdots+\alpha_d, \qquad \alpha!=\alpha_1!\alpha_2!\cdots\alpha_d!\,.
\end{equation}

For $\alpha, \gamma\in \N_0^d$, we introduce the multinomial coefficient $\binom{\alpha}{\gamma}$ by 

\begin{equation}
\binom{\alpha}{\gamma}
  := \prod_{j=1}^d \binom{\alpha_j}{\gamma_j},\qquad \gamma\leq\alpha,
\end{equation}
where $\gamma\leq\alpha$ means $\gamma_j \le \alpha_j, \forall j$. We use the convention that $\binom{\alpha}{\gamma}=0$ whenever a component of $\gamma$ is negative or $\gamma_j>\alpha_j$ for some $j\in\{1,\cdots,d\}$.

Let $\Sspace'(\R^d)$ be the topological dual of $\Sspace(\R^d)$, i.e the space of tempered distributions on 
$\R^d$. For $u\in \mathcal S(\R^d)$ and $\xi\in \R^d$, we define the Fourier transform (resp. Fourier transform inverse) by
\begin{equation}
\widehat u(\xi):=\F (u)(\xi):=\int_{\R^d} e^{i\langle x,\xi\rangle}\,u(x)\,dx,
\qquad
u(x):=\F^{-1}(\widehat{u})(x):=(2\pi)^{-d}\int_{\R^d} e^{-i\langle x,\xi\rangle}\,\widehat{u}(\xi)\,d\xi.
\end{equation}

We extend $\F$ to tempered distributions $T\in\mathcal S'(\R^d)$ by duality:
\begin{equation}
\langle \widehat T,u\rangle := \langle T,\widehat u\rangle,
\qquad u\in\mathcal S(\R^d),
\end{equation}
(where $\langle\cdot,\cdot\rangle$ is the canonical pairing between $\mathcal S$ and $\mathcal S'$).
Moreover, for $u,v\in \mathcal S(\R^d)$ the Parseval identity holds:
\begin{equation}
\int_{\R^d} u(x)\,{v(x)}\,dx
=(2\pi)^{-d}
\int_{\R^d} \widehat u(\xi)\,{\widehat v(-\xi)}\,d\xi.
\end{equation}

In this work we focus on the following Cauchy problem

\begin{equation}\label{problem}
\begin{cases}
\partial_t\Phi_t(\phi)=\Psi(\nabla)\Phi_t(\phi), \qquad (t,\phi)\in \R_+\times \R^d, \\ \Phi_0(\phi)=f\!\big(\beta V(\phi)\big), \qquad\qquad\qquad \phi\in\R^d,\,\,\beta\in\R^*, 
\end{cases}
\end{equation}
where $f:\R\rightarrow \R$, and $V:\R^d\to\R$ is a polynomial of the form
\begin{equation}\label{Vdef}
V(\phi)=\sum_{p=0}^{\bar p}\big\langle \lambda^{(p)},\phi^{\otimes p}\big\rangle_p=\sum_{p=0}^{\bar p}\sum_{i_1,\dots,i_p=1}^d
\lambda^{(p)}_{i_1\cdots i_p}\,\phi_{i_1}\cdots \phi_{i_p},
\qquad \bar p\in\N.
\end{equation}

Here $\lambda^{(p)}$ are symmetric tensors of $p$-th degree over $\R^d$. 

Equivalently, we can write for $\alpha\in\N_0^d$
\begin{equation}\label{Vdef2}
V(\phi)=\sum_{|\alpha|\le \bar p} c_\alpha \phi^\alpha,\qquad
c_\alpha=\frac{|\alpha|!}{\alpha!}\,\lambda^{(|\alpha|)}_\alpha.
\end{equation}
{

$\Psi(\nabla)$ represents a pseudo-differential operator with symbol defined for $\xi$ belonging to a neighbourhood of zero by the following expansion 

\begin{equation}\label{pseudo}
    \Psi(\xi) = \sum_{n=0}^{\infty} \langle C^{(n)}, \xi^{\otimes n}\rangle 
= \sum_{n=0}^{\infty} \sum_{i_1,\ldots,i_n=1}^d C^{(n)}_{i_1 \cdots i_n} \, \xi_{i_1} \cdots \xi_{i_n},
\end{equation}
where $C^{(n)}=(C^{(n)}_{i_1\cdots i_n})$ are symmetric tensors of $n$-th degree over $\R^d$. 

We denote the Fourier symbol of $\Psi(\nabla)$ by $\psi$. Clearly, for $\xi\in\R^d$, $\psi(\xi)=\Psi(-i\xi)$. We assume that the symbol satisfies the following assumptions}
\begin{itemize}
\item[(P1)] $\xi\mapsto\Psi(-i\xi)$ is analytic in a neighborhood of $0\in\R^d$.

\item[(P2)] $\Re \Psi(-i\xi)\le c\log(1+\|\xi\|)$ for all $\xi\in\R^d$ and some $c>0$, with $\Re(z)$ being the real part of $z$.
\item[(P3)] $\forall\,\alpha\in\N_0^d\ \exists\,N_\alpha\in\N,\ b_\alpha>0:\ 
|\pa^\alpha\Psi(-i\xi)|\le b_\alpha(1+\|\xi\|)^{N_\alpha}\ \forall\xi\in\R^d.$
\end{itemize}

For every $t\geq 0$, we define the convolution kernel $\nu_t$ associated with the solution $\Phi_t$ of the Cauchy problem \eqref{problem} by
\begin{equation}\label{nu}
\nu_t:=\F^{-1}(e^{t\Psi(-i\xi)}).
\end{equation}

{Denote by $\mathcal O_M(\R^d)$ the space of Schwartz Multipliers, that is, the space of infinitely differentiable functions whose derivatives are polynomially bounded. 
}
The following result holds:  

\begin{theorem}\label{nutemp}
    For every $t\geq0$, the convolution kernel $\nu_t$ defines a tempered distribution, i.e, $\nu_t\in \mathcal S'(\R^d)$.
\end{theorem}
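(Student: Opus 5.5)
The plan is to show that the function $g_t(\xi):=e^{t\Psi(-i\xi)}$ is itself a tempered distribution. Since $\F^{-1}$ is an isomorphism of $\mathcal S'(\R^d)$, it then follows that $\nu_t=\F^{-1}(g_t)\in\mathcal S'(\R^d)$. In fact I would aim for the stronger statement $g_t\in\mathcal O_M(\R^d)$, because multipliers embed continuously into $\mathcal S'$; this is presumably why the space $\mathcal O_M$ was introduced just before the theorem. The case $t=0$ is trivial, since $g_0\equiv 1$ and $\nu_0=\delta_0$, so fix $t>0$.

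\textbf{Step 1 (growth of $g_t$).} I will use (P2) directly. Since $|e^{t\Psi(-i\xi)}|=e^{t\Re\Psi(-i\xi)}$, we get
\begin{equation*}
|g_t(\xi)|\le e^{tc\log(1+\|\xi\|)}=(1+\|\xi\|)^{ct}.
\end{equation*}
Thus $g_t$ is continuous (smoothness is implicit in (P3)) and polynomially bounded. This already suffices for $g_t\in\mathcal S'$ via $\langle g_t,u\rangle=\int g_t u\,d\xi$: for any $u\in\mathcal S$,
\begin{equation*}
|\langle g_t,u\rangle|\le \int(1+\|\xi\|)^{ct-d-1}\,d\xi\;\|u\|_{0,\lceil ct\rceil+d+1},
\end{equation*}
and the integral on the right is finite. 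The bound is a finite Schwartz seminorm, which gives continuity.

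\textbf{Step 2 (derivatives, for $g_t\in\mathcal O_M$).} I apply the multivariate Faà di Bruno formula to $\pa^\alpha e^{t\Psi(-i\xi)}$. This writes it as $g_t(\xi)$ times a finite sum, indexed by partitions $\alpha=\gamma_1+\dots+\gamma_k$, of products $t^k\prod_j\pa^{\gamma_j}\Psi(-i\xi)$. By (P3) each product is bounded by $\prod_j b_{\gamma_j}(1+\|\xi\|)^{N_{\gamma_j}}$. Combining this with Step 1 gives
\begin{equation*}
|\pa^\alpha g_t(\xi)|\le C_{\alpha,t}(1+\|\xi\|)^{ct+M_\alpha},
\end{equation*}
where $M_\alpha$ is the maximum of $\sum_j N_{\gamma_j}$ over the partitions. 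Hence $g_t\in\mathcal O_M(\R^d)\subset\mathcal S'(\R^d)$.

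\textbf{Step 3 (conclusion).} By the duality definition of $\F$ on $\mathcal S'$ (and likewise $\F^{-1}$), $\nu_t$ is well defined, with $\langle\nu_t,u\rangle=\langle g_t,\F^{-1}u\rangle$ up to the reflection and normalisation conventions fixed in the paper. The map $u\mapsto\F^{-1}u$ is continuous on $\mathcal S$, so $\nu_t$ is a continuous linear functional on $\mathcal S$, that is, $\nu_t\in\mathcal S'(\R^d)$.

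\textbf{Main obstacle.} The only real subtlety is bookkeeping. For the bare statement $\nu_t\in\mathcal S'$, Step 1 alone suffices, so the key point is simply to apply (P2) to $\Re\Psi$; the analyticity in (P1) plays no role here. The Faà di Bruno estimate in Step 2 is routine but somewhat messy; it matters only for the regularity properties of $g_t$ used later, for example in justifying convolution with the initial data.
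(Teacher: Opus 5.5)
Your proposal is correct and follows the paper's route: you show $e^{t\Psi(-i\xi)}\in\mathcal O_M(\R^d)\subset\mathcal S'(\R^d)$ via (P2)--(P3), then obtain $\nu_t$ by duality from the inverse Fourier transform on $\mathcal S'$; you also supply the Fa\`a di Bruno bookkeeping the paper omits, and you rightly note that the polynomial bound from (P2) alone already suffices. One small slip in Step 1: after extracting $\|u\|_{0,\lceil ct\rceil+d+1}$, the remaining integrand is $(1+\|\xi\|)^{ct-\lceil ct\rceil-d-1}\le(1+\|\xi\|)^{-d-1}$, not $(1+\|\xi\|)^{ct-d-1}$, which is not integrable once $ct\ge1$.
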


\begin{proof} By assumptions $(P1), (P2)$, and $(P3)$, the function $\R^d\ni\xi\mapsto e^{t\Psi(-i\xi)}$, $t\geq0$, is infinitely differentiable
with polynomially bounded derivatives. Therefore it belongs to $\mathcal O_M(\R^d)$, and by \cite{horvath}, Chap.4, Sec.11, Theorem 2, it defines a tempered distribution. Hence, by duality,
$\nu_t\in\mathcal S'(\R^d)$ satisfying
\begin{equation}\label{nuhat}
\widehat{\nu_t}(\xi)=e^{t\Psi(-i\xi)},\qquad \xi\in\R^d.
\end{equation}
\end{proof}

\begin{assumption}\label{assumptions} Throughout this work, we will assume the following: 

\begin{enumerate}
    \item\label{I} $f:\R\rightarrow \R$ is a function in the Schwartz space on $\R$ (i.e $f\in \mathcal S(\R)$).
    \item\label{II} $V:\R^d\to\R$ is a coercive polynomial, i.e, $|V(\phi)|\to\infty$ as $|\phi|\to\infty$.
    \item\label{III} The symbol of the pseudodifferential operator $\Psi(\nabla)$ is of Lévy type, that is 
    \begin{equation}\label{levysymbol}
\Psi(-i\xi) = i\langle a, \xi\rangle - \langle \xi, D\xi\rangle 
+ z \int_{\mathbb{R}^d \setminus \{0\}} \big(e^{i\langle \varphi,\xi\rangle} - 1\big)\, dr(\varphi), \qquad \xi\in\R^d.
\end{equation}
where $a \in \mathbb{R}^d$, $D\in \mathcal M_d(\mathbb R)$ symmetric positive semi-definite matrix,  { $z\geq0$ is the jump intensity,}
 and $r$ is a probability measure on $\mathbb{R}^d \setminus \{0\}$ with entire analytic Fourier transform.
\end{enumerate}
\end{assumption} 

Since the initial condition of the Cauchy problem \eqref{problem} is of the form $f(\beta V)$, the regularity and stability of such compositions in $\Sspace(\R^d)$ must be established. 

The following result holds

\begin{theorem}\label{fschwartz}
    For $\beta \in \R^*$, we have $f(\beta V)\in\mathcal S(\R^d)$. 
\end{theorem}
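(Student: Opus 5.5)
We show that $g:=f(\beta V)$ is smooth and that every seminorm $\|g\|_{\alpha,N}$ is finite. Smoothness is immediate: $f\in\mathcal S(\R)$ is $C^\infty$ and $\beta V$ is a polynomial, so $g\in C^\infty(\R^d)$.

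\emph{Step 1: structure of derivatives.} By the multivariate chain rule (Faà di Bruno), for every multi-index $\alpha$,
\begin{equation*}
\pa^\alpha g(\phi)=\sum_{k=0}^{|\alpha|} f^{(k)}\big(\beta V(\phi)\big)\,P_{\alpha,k}(\phi),
\end{equation*}
where each $P_{\alpha,k}$ is a finite sum of products of $k$ derivatives $\beta\,\pa^{\gamma_j}V$ with $\gamma_1+\dots+\gamma_k=\alpha$. Each $P_{\alpha,k}$ is therefore a polynomial, of degree at most $k\bar p$. Consequently there are constants $A_\alpha$ and $M_\alpha$ with
\begin{equation*}
|\pa^\alpha g(\phi)|\le A_\alpha(1+|\phi|)^{M_\alpha}\max_{k\le|\alpha|}\big|f^{(k)}(\beta V(\phi))\big| .
\end{equation*}

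\emph{Step 2: polynomial lower bound on $|V|$.} This is the main obstacle. Coercivity alone only says $|V(\phi)|\to\infty$. To convert the rapid decay of $f^{(k)}$ in its argument into decay in $\phi$, we need a quantitative bound of the form
\begin{equation*}
|V(\phi)|\ge c\,|\phi|^{\delta}-C\qquad\text{for some } c,\delta>0 .
\end{equation*}
We obtain it from the Łojasiewicz-type inequality for coercive polynomials: coercive polynomial maps satisfy a growth estimate with a positive exponent. One route is via the Tarski--Seidenberg principle. The function $m(R):=\min_{|\phi|=R}|V(\phi)|$ is semialgebraic, hence eventually $m(R)\sim cR^{\delta}$ for a rational $\delta$. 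Coercivity gives $m(R)\to\infty$, which forces $\delta>0$. If one prefers to avoid semialgebraic geometry, one can instead assume the top homogeneous part $V_{\bar p}$ is definite. In that case $|V(\phi)|\ge c|\phi|^{\bar p}-C$ follows directly, since $|V_{\bar p}(\phi)|\ge c_0|\phi|^{\bar p}$ on the unit sphere by compactness and the lower-order terms are $O(|\phi|^{\bar p-1})$. We would state the general route and remark on the simpler special case.

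\emph{Step 3: conclusion.} Since $f\in\mathcal S(\R)$, for every $L$ and $k$ we have $|f^{(k)}(s)|\le B_{k,L}(1+|s|)^{-L}$. Combining this with Step 2 gives
\begin{equation*}
1+|\beta V(\phi)|\ge c'(1+|\phi|)^{\delta}
\end{equation*}
for $|\phi|$ large, with $c'>0$ depending on $\beta\neq0$. Hence
\begin{equation*}
(1+|\phi|)^N|\pa^\alpha g(\phi)|\le A_\alpha B\,(1+|\phi|)^{N+M_\alpha-\delta L}.
\end{equation*}
Choosing $L$ with $\delta L\ge N+M_\alpha$ makes this bounded for large $|\phi|$. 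On the remaining compact set, continuity gives boundedness. Thus every seminorm $\|g\|_{\alpha,N}$ is finite, and $g=f(\beta V)\in\mathcal S(\R^d)$.
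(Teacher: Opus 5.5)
Your proof is correct, but it takes a more direct route than the paper. The paper does not estimate the seminorms itself. It checks the sufficient conditions of Galbis's characterization of composition operators on $\mathcal S$ (Theorem 2.3 with Remark 2.4 there, adapted to $d$ variables). Those conditions are the lower bound $|\beta V(\phi)|\ge|\phi|^{1/K}$ for $|\phi|\ge K$, derived from the Bajbar--Stein estimate $V(\phi)^2\ge a|\phi|^\ell$ outside a ball, and the growth condition $|\partial^\alpha(\beta V)(\phi)|\le C_{\alpha,\beta}(1+|\beta V(\phi)|^2)^{p_\alpha}$. The paper then quotes that theorem.

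You instead prove the needed direction by hand. Faà di Bruno gives polynomial coefficients $P_{\alpha,k}$. A power-type lower bound on $|V|$ turns the rapid decay of $f^{(k)}$ into decay of arbitrary order in $|\phi|$, which beats those polynomial factors.

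The essential input is the same in both arguments: a power-type lower bound for a coercive polynomial. You obtain it from Tarski--Seidenberg and the Puiseux-type behaviour at infinity of the semialgebraic function $m(R)=\min_{|\phi|=R}|V(\phi)|$. The paper cites Bajbar--Stein, which rests on the same semialgebraic machinery.

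What your version buys is self-containment. It does not depend on the $d$-dimensional extension of a composition criterion, which the paper asserts rather than proves. It also never needs the derivative condition $|\partial^\alpha V|\lesssim(1+|V|^2)^{p}$, because you compare everything directly with powers of $|\phi|$. The paper's version is shorter on the page and places the statement inside a known characterization, but that adds nothing here.

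Your remark on the case where the top homogeneous part is definite is fine as an aside. It does not cover all coercive polynomials; for example, $x^4+y^2$ is coercive but its top part is not definite. So keeping the semialgebraic argument as the main route is the right choice.
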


\begin{proof} 
The idea of the proof is based on the extension of \cite[Theorem 2.3]{galbis} through \cite[Remark 2.4]{galbis}. We first prove that the map $\phi\mapsto \beta V(\phi)$ is a Schwartz symbol on $\R^d$. For that, we fix $\beta\in\R^*$. By Assumption \ref{assumptions},
the polynomial $P=V^2$ is coercive. By
\cite[Section~2, Eq.~(2), and Corollary~3.4]{bajbar},
there exist $a,R,\ell>0$ such that
\begin{equation}
V(\phi)^2\geq a|\phi|^\ell,
\qquad |\phi|\geq R.
\end{equation}

Hence
\begin{equation}
|\beta V(\phi)|
\geq |\beta|\sqrt a\,|\phi|^{\ell/2},
\qquad |\phi|\geq R.
\end{equation}

Let now $c=|\beta|\sqrt a>0$ and $q=\ell/2>0$, and choose $K>1$ such that
\begin{equation}
K\geq\max\left\{R,\frac{2}{q},c^{-2/q}\right\}.
\end{equation}

For $|\phi|\geq K$, we have $c|\phi|^{q/2}\geq1$
and $1/K\leq q/2$. Hence
\begin{equation}
|\beta V(\phi)|
\geq c|\phi|^q
=\bigl(c|\phi|^{q/2}\bigr)|\phi|^{q/2}
\geq|\phi|^{q/2}
\geq|\phi|^{1/K},
\end{equation}
which proves the $d$-dimensional analogue of condition (ii) in \cite[Theorem 2.3]{galbis}.

For every multiindex $\alpha$, the derivative
$\partial^\alpha(\beta V)$ is a polynomial.
Therefore we can find constants $C_{\alpha,\beta},p_\alpha>0$ such that
\begin{equation}
|\partial^\alpha(\beta V)(\phi)|
\leq C_{\alpha,\beta}
     (1+|\beta V(\phi)|^2)^{p_\alpha},
\qquad \phi\in\R^d.
\end{equation}

The analogue growth conditions in the sufficient
part of \cite[Theorem~2.3 and Remark~2.4]{galbis} are satisfied. Since $f\in\mathcal S(\R)$ by
Assumption \ref{assumptions}, we obtain
$f(\beta V)\in\mathcal S(\R^d)$.
\end{proof}

For $\beta\in\R^*$,  and for fixed $V$ as in Assumptions \ref{assumptions}, we denote $f_\beta(\cdot)=f(\beta V(\cdot))$. The following result holds.

\begin{theorem}\label{existuniq}
Under assumptions \ref{assumptions}, the Cauchy problem \eqref{problem} has a unique solution \\$\Phi_t^\beta\in\mathcal S'(\R^d)$ given by
\begin{equation}\label{phisol}
\Phi_t^\beta(\phi)=(\nu_t*f_\beta)(\phi),\qquad \phi\in \R^d, \quad \beta\in\R^*
\end{equation}

\end{theorem}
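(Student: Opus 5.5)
The plan is to pass to Fourier variables, where the Cauchy problem \eqref{problem} becomes a linear ODE in $t$ with parameter $\xi$. We solve that ODE explicitly and then return to physical space.

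\textbf{Step 1: transform the data.} By Theorem \ref{fschwartz}, $f_\beta\in\mathcal S(\R^d)$, so $\widehat{f_\beta}\in\mathcal S(\R^d)$. By definition of the symbol, $\F(\Psi(\nabla)u)(\xi)=\Psi(-i\xi)\widehat u(\xi)$, and this extends to $u\in\mathcal S'(\R^d)$. The extension is legitimate because, by (P1)--(P3), $\xi\mapsto\Psi(-i\xi)$ lies in $\mathcal O_M(\R^d)$, and multiplication by an $\mathcal O_M$ function is continuous on $\mathcal S$ and on $\mathcal S'$. Applying $\F$ to \eqref{problem} therefore gives
\begin{equation*}
\partial_t\widehat{\Phi_t}(\xi)=\Psi(-i\xi)\,\widehat{\Phi_t}(\xi),\qquad \widehat{\Phi_0}=\widehat{f_\beta}.
\end{equation*}

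\textbf{Step 2: existence.} Set $\widehat{\Phi_t}(\xi):=e^{t\Psi(-i\xi)}\widehat{f_\beta}(\xi)$. By Theorem \ref{nutemp}, $e^{t\Psi(-i\cdot)}=\widehat{\nu_t}\in\mathcal O_M$, so the product lies in $\mathcal S(\R^d)$. The exchange formula $\F(\nu_t*f_\beta)=\widehat{\nu_t}\,\widehat{f_\beta}$ holds for $\nu_t\in\mathcal S'$ and $f_\beta\in\mathcal S$. Hence $\Phi^\beta_t=\nu_t*f_\beta$, which is a smooth function of polynomial growth, indeed in $\mathcal S$. Differentiating in $t$ is justified in the topology of $\mathcal S$, because $t\mapsto e^{t\Psi(-i\xi)}$ has $t$-derivative $\Psi(-i\xi)e^{t\Psi(-i\xi)}$. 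Its $\xi$-derivatives are bounded locally uniformly in $t$ by polynomials, using (P2) and (P3): (P2) gives $|e^{t\Psi}|\le (1+\|\xi\|)^{ct}$. This verifies the equation, and the initial condition follows from $\nu_0=\delta$.

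\textbf{Step 3: uniqueness.} Let $\Phi_t$ be any solution in $\mathcal S'$ that is differentiable in $t$ in the weak sense. Put $G_t:=e^{-t\Psi(-i\xi)}\widehat{\Phi_t}$; this is well defined because $e^{-t\Psi(-i\cdot)}$ is smooth. The main obstacle is that $e^{-t\Psi(-i\cdot)}$ need not be polynomially bounded, since (P2) only bounds $\Re\Psi$ from above. My first attempt is to pair with test functions $g\in C_c^\infty(\R^d)$, for which multiplication by any smooth function is harmless. Then $\frac{d}{dt}\langle G_t,g\rangle=\langle\widehat{\Phi_t},(\Psi-\Psi)e^{-t\Psi}g\rangle=0$. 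So $\widehat{\Phi_t}=e^{t\Psi}\widehat{f_\beta}$ as distributions on $C_c^\infty$, and by density of $C_c^\infty$ in $\mathcal S$, also in $\mathcal S'$. For the Lévy symbol \eqref{levysymbol}, $\Re\Psi(-i\xi)=-\langle\xi,D\xi\rangle+z\int(\cos\langle\varphi,\xi\rangle-1)\,dr(\varphi)\le0$. Since the argument only uses compact support, no growth control on $e^{-t\Psi}$ is needed. Inverting $\F$ then gives \eqref{phisol} uniquely.
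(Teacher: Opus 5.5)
Your proposal is correct and follows the paper's route. Both arguments pass to the Fourier side to get the multiplier ODE, take the explicit solution $e^{t\Psi(-i\xi)}\widehat{f_\beta}$ for existence, and prove uniqueness by multiplying with $e^{-t\Psi(-i\xi)}$. Your restriction to $C_c^\infty$ test functions in the uniqueness step adds rigor the paper lacks: for $D\neq0$, $e^{-t\Psi(-i\xi)}$ grows like $e^{t\langle\xi,D\xi\rangle}$, so it is not an $\mathcal O_M$ multiplier on $\mathcal S'$, and the paper's one-line argument passes over this (the product rule you use there needs only the routine Banach--Steinhaus equicontinuity that follows from weak differentiability).
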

\begin{proof}{
By Theorem \ref{fschwartz}, we have $f_\beta\in\mathcal S$. On the other hand by (P2)-(P3), the multiplier $e^{t\Psi(-i\xi)}$ and its time derivative have polynomially bounded derivatives. Thus $e^{t\Psi(-i\xi)}\widehat{f_\beta}$ defines a tempered distribution, with initial value $\widehat{f_\beta}$ and derivative $\Psi(-i\xi)e^{t\Psi(-i\xi)}\widehat{f_\beta}$. Applying the Fourier transform proves existence and equality \eqref{phisol} holds. For uniqueness, the Fourier transform of the difference of two solutions satisfies the homogeneous multiplier equation with zero initial value. Multiplying by $e^{-t\Psi(-i\xi)}$ gives zero, hence the difference is zero in $\mathcal S'$.
}\end{proof}

\section{Power Series Expansion of the Solution}

The Cauchy problem \eqref{problem} is well posed, as shown in the previous section. However, no information about the asymptotic behavior of the solution was provided. The aim of this section is therefore to derive the power series expansion of the solution $\Phi_t^\beta$, $\beta \in \R^*$.

The following lemma, called the Multiindex Leibniz formula, is used for multiindex order derivatives of functions products (See \cite{horvath}, Chap.2, Sec.5, Prop. 3).
\begin{lemma}\label{leib} Let $f,g\in \mathcal C^\infty(\R^d)$ be real valued functions. Then, for $\alpha \in \mathbb{N}_0^d$, we have
\begin{equation}
\pa^\alpha (fg)
  = \sum_{\gamma \le \alpha} \binom{\alpha}{\gamma}\,
    \pa^\gamma f \, \pa^{\alpha-\gamma} g.
\end{equation}
\end{lemma}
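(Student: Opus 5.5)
We prove the multi-index Leibniz formula by induction on the order $|\alpha|$, using only the one-variable product rule $\pa_j(fg)=(\pa_j f)g+f(\pa_j g)$ and the fact that partial derivatives of $\mathcal C^\infty$ functions commute (Schwarz's theorem).

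For $|\alpha|=0$ the identity reads $fg=fg$. Now suppose it holds for all multi-indices of order $n$. Let $|\alpha|=n+1$, choose a coordinate $j$ with $\alpha_j\geq1$, and write $\alpha=\alpha'+e_j$, where $e_j$ is the $j$-th unit multi-index. Since derivatives commute, $\pa^\alpha=\pa_j\pa^{\alpha'}$. Apply the induction hypothesis to $\pa^{\alpha'}(fg)$, then differentiate each term with the product rule:
\begin{equation*}
\pa^\alpha(fg)=\sum_{\gamma\le\alpha'}\binom{\alpha'}{\gamma}\Big(\pa^{\gamma+e_j}f\,\pa^{\alpha'-\gamma}g+\pa^\gamma f\,\pa^{\alpha'-\gamma+e_j}g\Big).
\end{equation*}
In the first sum, shift the index $\gamma\mapsto\gamma-e_j$. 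Both sums then have the form $\sum_{\gamma\le\alpha}c_\gamma\,\pa^\gamma f\,\pa^{\alpha-\gamma}g$, with coefficient
\begin{equation*}
c_\gamma=\binom{\alpha'}{\gamma-e_j}+\binom{\alpha'}{\gamma}.
\end{equation*}
The paper's convention, that a binomial symbol vanishes when a component is negative or exceeds $\alpha'$, handles the boundary cases $\gamma_j=0$ and $\gamma_j=\alpha_j$ automatically.

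The remaining step is to show $c_\gamma=\binom{\alpha}{\gamma}$. Because the multinomial coefficient is a product over coordinates, and $\alpha'$ and $\alpha$ differ only in coordinate $j$, the factors for $k\neq j$ are common to both terms. Factor them out. What remains is Pascal's rule in coordinate $j$:
\begin{equation*}
\binom{\alpha_j-1}{\gamma_j-1}+\binom{\alpha_j-1}{\gamma_j}=\binom{\alpha_j}{\gamma_j}.
\end{equation*}
This completes the induction.

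The only delicate point is the bookkeeping at the boundary indices and the factorization of the product coefficient. An alternative route avoids it: apply the one-dimensional Leibniz rule successively in each variable $x_1,\dots,x_d$ and multiply out the resulting binomial factors.
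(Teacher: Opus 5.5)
Your proof is correct and follows the same route as the paper: induction on $|\alpha|$ using the product rule, with the two resulting sums recombined by Pascal's identity for the multi-index binomial coefficients. Your handling of the boundary indices through the vanishing convention, and your coordinatewise factorization of the coefficient, fill in details the paper leaves implicit.
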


\begin{proof}
The proof is achieved via induction on $|\alpha|$ by applying the product rule. The two terms at each step are combined by Pascal's identity for the multiindex binomial coefficients.
\end{proof}

We now state the generalized Faà di Bruno theorem, which provides the formula for multiindex-order derivatives of composite functions.

\begin{theorem}\label{faadibruno}
Let $h\in \mathcal C^\infty(\R^d)$, $g\in \mathcal C^\infty(\R)$ be real valued functions. Then, for every  $\alpha\in{ \N_0^d\setminus\{0\}}$,
\begin{equation}\label{faadibrunoeq}
\pa^\alpha\bigl[g(h(\phi))\bigr]
=
\sum_{k=1}^{|\alpha|}
g^{(k)}(h(\phi))
\sum_{(j_\mu)\in\mathcal J_{\alpha,k}}
\frac{\alpha!}{\displaystyle\prod_\mu j_\mu!\,(\mu!)^{j_\mu}}
\prod_\mu \bigl(\pa^\mu h(\phi)\bigr)^{j_\mu},
\end{equation}
\end{theorem}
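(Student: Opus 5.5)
The formula will be proved by induction on $|\alpha|$. Here $\mathcal J_{\alpha,k}$ is the set of families $(j_\mu)_{0\neq\mu\le\alpha}$ of nonnegative integers with $\sum_\mu j_\mu=k$ and $\sum_\mu j_\mu\,\mu=\alpha$. Before running the induction, I will pass to a combinatorial form of the formula in which the coefficient bookkeeping is transparent. The right-hand side of \eqref{faadibrunoeq} equals
\begin{equation*}
\sum_{\pi} g^{(|\pi|)}(h(\phi))\prod_{B\in\pi}\pa^{\alpha_B}h(\phi).
\end{equation*}
In this form we view $\pa^\alpha$ as a product of $|\alpha|$ first-order derivatives $\pa_{e_1}\cdots\pa_{e_{|\alpha|}}$, with the labelled list $(e_1,\dots,e_{|\alpha|})$ containing $\alpha_i$ copies of $\pa_{x_i}$. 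The sum runs over set partitions $\pi$ of $\{1,\dots,|\alpha|\}$, and $\alpha_B:=\sum_{m\in B}e_m$ is the multi-index of block $B$.

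The equivalence of the two forms is a counting step. Fix a block type $(j_\mu)$, meaning $j_\mu$ blocks of multi-index $\mu$. The number of set partitions of the labelled list with this type, counted up to permuting the indistinguishable copies of each $\pa_{x_i}$, is $\alpha!/\prod_\mu j_\mu!(\mu!)^{j_\mu}$. The argument distributes the $\alpha_i$ copies of direction $i$ among the blocks, giving a multinomial $\prod_i\alpha_i!/\prod_{\text{blocks}}\prod_i\mu_i!$. One then divides by $\prod_\mu j_\mu!$ because blocks of equal type are unordered. More cleanly, I will count ordered assignments and compare, so that each collection of partitions contributing the same monomial $\prod_\mu(\pa^\mu h)^{j_\mu}$ is grouped together.

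The induction on $|\alpha|$ then proceeds as follows.

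\emph{Base case.} For $|\alpha|=1$, say $\alpha=e_i$, the formula is the chain rule $\pa_i g(h)=g'(h)\pa_i h$.

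\emph{Inductive step.} Write $\alpha'=\alpha+e_i$ and apply $\pa_i$ to the partition form for $\alpha$, using the product rule (Lemma \ref{leib} with a first-order derivative) and the chain rule. There are two kinds of terms.
\begin{itemize}
\item If $\pa_i$ hits $g^{(|\pi|)}(h)$, it produces $g^{(|\pi|+1)}(h)\pa_ih$. This corresponds to adjoining the new element $|\alpha|+1$ as a singleton block.
\item If $\pa_i$ hits a factor $\pa^{\alpha_B}h$, it produces $\pa^{\alpha_B+e_i}h$. This corresponds to inserting the new element into the block $B$.
\end{itemize}
Every partition of $\{1,\dots,|\alpha|+1\}$ arises exactly once from these two operations: remove the last element and record whether it was a singleton. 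Hence the partition form holds for $\alpha'$.

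The main obstacle is the coefficient identification. It can be avoided by working throughout with the labelled set-partition form, where each partition carries coefficient exactly $1$, and converting to the $(j_\mu)$ form only at the end via the count above. Alternatively, one can set $e_m$ to be distinct variables, prove the classical multivariate formula for distinct first-order derivatives, and then specialize repeated variables. That specialization is exactly what produces the factor $\alpha!/\prod_\mu j_\mu!(\mu!)^{j_\mu}$.
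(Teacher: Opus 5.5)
Your argument is correct, but it does not follow the paper. The paper gives no proof of its own and simply invokes Theorem 1.1 of Hern\'andez Encinas and Mu\~noz Masqu\'e. You instead give a self-contained combinatorial proof in two stages.

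First, you prove the labelled set-partition form $\pa_{e_1}\cdots\pa_{e_n}[g(h)]=\sum_{\pi\in\mathcal P([n])}g^{(|\pi|)}(h)\prod_{B\in\pi}\pa^{\alpha_B}h$ by induction. The singleton/insertion bijection in the inductive step is exactly right, and commutativity of partial derivatives makes the ordering of the list $(e_1,\dots,e_n)$ irrelevant.

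Second, you collapse this to the $(j_\mu)$-form by counting partitions of each type. This count is the only delicate step, and your computation is correct. You distribute the $\alpha_i$ copies of direction $i$ over ordered blocks with prescribed multi-indices, which gives $\alpha!/\prod_b\mu^{(b)}!$. You then divide by $\prod_\mu j_\mu!$, which is legitimate because distinct blocks of equal type can be permuted. The types that occur with $k$ blocks are exactly $\mathcal J_{\alpha,k}$, since every $\mu$ in the index set is nonzero and so every block is nonempty.

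Your route makes the origin of the coefficient transparent. The number $\alpha!/\prod_\mu j_\mu!\,(\mu!)^{j_\mu}$ is literally a count of set partitions, which is the same partition language the paper later uses for truncated moments and cumulants. The citation buys only brevity. Another standard proof reduces to $g(y)=y^k$ via jets and reads the coefficient off the multinomial theorem, which avoids the counting step altogether.

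One wording fix is needed. The number $\alpha!/\prod_\mu j_\mu!\,(\mu!)^{j_\mu}$ counts labelled set partitions of $\{1,\dots,|\alpha|\}$ of the given type. It does not count them ``up to permuting the indistinguishable copies of each $\pa_{x_i}$''. Modulo such permutations, all partitions of one type are identified, so that quotient has exactly one element. Your actual derivation computes the labelled count, so only the qualifier should be removed.
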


where 
\begin{equation}\label{Jalphak}
\mathcal J_{\alpha,k}
:=
\left\{
(j_\mu)_{\mu\in\mathbb N_0^d\setminus\{0\}}:
j_\mu\in\mathbb N_0,\ 
\sum_\mu j_\mu=k,\ 
\sum_\mu j_\mu\mu=\alpha
\right\}.
\end{equation}
\begin{proof}
This follows from \cite{hernandez} Theorem 1.1 .
\end{proof}

\begin{corollary}\label{faadibrunopol}
If $V:\R^d\to\R$ is a polynomial, $g\in \mathcal C^\infty(\R)$, and $\alpha\in{ \N_0^d\setminus\{0\}}$, then $\pa^\alpha[g(V(\phi))]$ is a finite sum of terms of the form
\begin{equation}
P_{\alpha,k}(\phi)\,g^{(k)}(V(\phi)),\qquad 1\leq k \leq |\alpha|,
\end{equation}
where each $P_{\alpha,k}$ is a polynomial in $\phi$.
\end{corollary}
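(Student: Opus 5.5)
The plan is to obtain the corollary directly from Theorem \ref{faadibruno}, applied with $h=V$. Since $V$ is a polynomial, it lies in $\mathcal C^\infty(\R^d)$, and $g\in\mathcal C^\infty(\R)$ by hypothesis, so the theorem applies for every $\alpha\in\N_0^d\setminus\{0\}$ and gives formula \eqref{faadibrunoeq} with $h=V$. Grouping the terms by the order $k$ of the derivative of $g$, I will set
\begin{equation*}
P_{\alpha,k}(\phi):=\sum_{(j_\mu)\in\mathcal J_{\alpha,k}}
\frac{\alpha!}{\prod_\mu j_\mu!\,(\mu!)^{j_\mu}}
\prod_\mu \bigl(\pa^\mu V(\phi)\bigr)^{j_\mu},
\qquad 1\le k\le |\alpha|,
\end{equation*}
so that $\pa^\alpha[g(V(\phi))]=\sum_{k=1}^{|\alpha|}P_{\alpha,k}(\phi)\,g^{(k)}(V(\phi))$. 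This already has the required form, and the outer sum over $k$ has only $|\alpha|$ terms.

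It remains to check two things: that each $P_{\alpha,k}$ is a finite sum, and that it is a polynomial.

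\textbf{Finiteness.} Take $(j_\mu)\in\mathcal J_{\alpha,k}$. The constraint $\sum_\mu j_\mu\mu=\alpha$, with all $\mu\neq 0$ and all $j_\mu\ge 0$, forces $j_\mu=0$ unless $\mu\le\alpha$ componentwise. It also forces $j_\mu\le|\alpha|$ for every $\mu$. So only finitely many multi-indices $\mu$ can occur, each exponent $j_\mu$ ranges over a finite set, and therefore $\mathcal J_{\alpha,k}$ is finite. For the same reason, each product $\prod_\mu(\pa^\mu V)^{j_\mu}$ has only finitely many factors different from $1$.

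\textbf{Polynomiality.} Every partial derivative $\pa^\mu V$ of a polynomial is again a polynomial; explicitly, from \eqref{Vdef2} it vanishes when $|\mu|>\bar p$. Finite products of polynomials, and finite linear combinations of them with the real coefficients $\alpha!/\prod_\mu j_\mu!(\mu!)^{j_\mu}$, are polynomials. Hence each $P_{\alpha,k}$ is a polynomial in $\phi$.

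There is no real obstacle in this argument. The only point needing care is the finiteness of $\mathcal J_{\alpha,k}$, which is what justifies calling $P_{\alpha,k}$ a polynomial rather than a formal series; it follows from the bound $\mu\le\alpha$ noted above. As an optional remark, counting degrees gives $\deg P_{\alpha,k}\le k\bar p-|\alpha|$, which may be useful later but is not required for the statement.
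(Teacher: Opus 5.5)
Your proposal is correct and follows the paper's proof: apply Theorem \ref{faadibruno} with $h=V$, then use that every $\pa^\mu V$ is again a polynomial. You also make explicit that $\mathcal J_{\alpha,k}$ is finite because only multi-indices $\mu\le\alpha$ can occur, which the paper's one-line proof only alludes to.
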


\begin{proof}
By applying Theorem \ref{faadibruno}, and by the fact that for every multiindex $\mu\leq\alpha$, 
$\pa^\mu V$ is again a polynomial, the result follows.
\end{proof}

The following theorem establishes the differentiability property of the initial condition $f$ with respect to $\beta\in \R^*$ in $\mathcal S(\R^d)$.

\begin{theorem}\label{infdiff}For $m\in\N$, the { map $\beta\mapsto f(\beta V)$} is infinitely differentiable { on $\R^*$} and
    \begin{equation}\label{differen}\pa^{m}_\beta f(\beta V(\phi)):=\frac{\dd^{m}}{\dd \beta^{m}}f(\beta V(\phi))=V^{m}(\phi)f^{(m)}(\beta V(\phi))\qquad \text{ in } \Sspace(\R^d),
\end{equation}
where $V^m(\phi)=\bigl(V(\phi)\bigr)^m$.
\end{theorem}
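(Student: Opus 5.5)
We will prove the statement by induction on $m$, reducing each step to the case $m=1$ applied with $f$ replaced by $f^{(m-1)}$ and with an extra polynomial factor. Pointwise the formula is just the chain rule: for fixed $\phi$, $\beta\mapsto f(\beta V(\phi))$ is smooth with $m$-th derivative $V(\phi)^m f^{(m)}(\beta V(\phi))$. The content of the theorem is that the difference quotients converge in the topology of $\Sspace(\R^d)$, i.e.\ in every seminorm $\|\cdot\|_{\alpha,N}$.

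\emph{Step 1 (membership in $\Sspace$).} Since $f\in\Sspace(\R)$, every $f^{(k)}$ lies in $\Sspace(\R)$. Theorem \ref{fschwartz} then gives $f^{(k)}(\beta V)\in\Sspace(\R^d)$ for $\beta\in\R^*$. Multiplication by the polynomial $V^m$ preserves $\Sspace(\R^d)$, so $V^m f^{(m)}(\beta V)\in\Sspace(\R^d)$.

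\emph{Step 2 (the case $m=1$, with a polynomial weight).} We prove the following statement for any polynomial $Q$ and any $g\in\Sspace(\R)$: the map $\beta\mapsto Q\,g(\beta V)$ is differentiable in $\Sspace(\R^d)$ with derivative $QV g'(\beta V)$. Fix $\beta_0\ne0$ and $0<|h|<|\beta_0|/2$. By Taylor's formula with integral remainder,
\begin{equation*}
\frac{g((\beta_0+h)V)-g(\beta_0V)}{h}-Vg'(\beta_0V)=hV^2\int_0^1(1-s)\,g''\big((\beta_0+sh)V\big)\,ds .
\end{equation*}
It therefore suffices to show that $\|QV^2 g''(\beta V)\|_{\alpha,N}$ is bounded uniformly for $\beta$ in the compact interval $I=[\beta_0-|\beta_0|/2,\beta_0+|\beta_0|/2]$, which does not contain $0$. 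We expand $\pa^\alpha$ using Lemma \ref{leib} and Corollary \ref{faadibrunopol} (applied to $\beta V$, whose derivatives are $\beta$ times polynomials with $|\beta|$ bounded on $I$). This reduces every term to the form $P(\phi)\,g^{(k)}(\beta V(\phi))$, where $P$ is a polynomial with coefficients bounded uniformly in $\beta\in I$.

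\emph{Step 3 (main obstacle: a uniform bound).} We must bound $\sup_{\beta\in I}\sup_\phi (1+|\phi|)^{N}|P(\phi)|\,|g^{(k)}(\beta V(\phi))|$. From the coercivity estimate in the proof of Theorem \ref{fschwartz} we have $|V(\phi)|\ge \sqrt a|\phi|^{q}$ for $|\phi|\ge R$, hence $|\beta V(\phi)|\ge \tfrac{|\beta_0|}{2}\sqrt a|\phi|^{q}$ uniformly on $I$. Given $M$, choose $C_M$ with $|g^{(k)}(y)|\le C_M(1+|y|)^{-M}$. Then for $|\phi|\ge R$ the term is at most $C(1+|\phi|)^{N+\deg P-qM}$, which is bounded once $M$ is large. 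On the ball $|\phi|\le R$ the term is bounded by continuity and compactness. This gives the uniform bound, so the remainder tends to $0$ in each seminorm and Step 2 is established. The key point here is that $\beta$ stays away from $0$; this is exactly why the theorem is stated on $\R^*$.

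\emph{Step 4 (induction).} Suppose $\pa_\beta^{m-1}f(\beta V)=V^{m-1}f^{(m-1)}(\beta V)$ in $\Sspace(\R^d)$. Apply Step 2 with $Q=V^{m-1}$ and $g=f^{(m-1)}$. This gives $\pa_\beta^m f(\beta V)=V^m f^{(m)}(\beta V)$ in $\Sspace(\R^d)$. Continuity of this derivative in $\beta$ (in $\Sspace$) follows from the same Taylor argument at first order together with the uniform bounds of Step 3. Hence the map is $C^\infty$ on $\R^*$ with values in $\Sspace(\R^d)$, and \eqref{differen} holds.
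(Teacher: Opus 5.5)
Your proposal is correct and follows essentially the same route as the paper: a Taylor integral-remainder identity for the difference quotient, a seminorm bound on the remainder that is uniform for $\beta$ in a compact interval avoiding $0$ (from the polynomial lower bound on $|V|$ given by coercivity together with the rapid decay of each $f^{(k)}$), and induction on $m$. Your formulation with a general polynomial weight $Q$ and $g=f^{(m-1)}$ is just the paper's identity for $V^m f^{(m)}$ with the index shifted.
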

\begin{proof}
Let $\beta\ne0$ and let $J\subset\R^*$ be a compact interval  containing $\beta$. By the coercivity of $V$, $|V(\phi)|\ge c|\phi|^q$ on $\{|\phi|>r\}$ for some $r,c,q>0$. The chain rule and rapid decay of every derivative of $f$ imply
\begin{equation}
\sup_{\sigma\in J}\big\|V^r f^{(m)}(\sigma V)\big\|_{\alpha,N}<\infty,
\end{equation}
for all $r,N,m\in\N_0$ and $\alpha\in\N_0^d$. Since spatial derivative is a finite sum of polynomial factors times $f^{(m+j)}(\sigma V)$, the lower bound absorbs any polynomial factor uniformly for $\sigma\in J$. By Taylor's integral remainder, we get
\begin{equation}\label{taylor1}
\frac{V^m f^{(m)}((\beta+h)V)-V^m f^{(m)}(\beta V)}{h}
-V^{m+1}f^{(m+1)}(\beta V)
=h\!\int_0^1(1-\theta)V^{m+2}f^{(m+2)}((\beta+\theta h)V)\,d\theta.
\end{equation}

The Schwartz seminorm of the L.H.S is at most $C_{\alpha,N,m,\beta}|h|$ for small $h$. Finally, proceeding by induction on $m$ proves \eqref{differen}.
\end{proof}

\begin{definition}
\label{def:nu-pairing}
For every $f\in\mathcal S(\mathbb R^d)$, we use the notation $\langle f\rangle_{\nu_t}$ to indicate the canonical pairing between $f\in \Sspace$ and $\nu_t\in \Sspace'(\R^d)$, that is
\begin{equation}\label{eq:nu-pairing}
\langle f\rangle_{\nu_t}
:=
\langle \nu_t,f\rangle.
\end{equation}
Under Assumption \ref{assumptions}(3), $\nu_t$ is the probability measure with characteristic function $e^{t\Psi(-i\xi)}$. The analyticity of the Fourier transform of $r$ implies finite polynomial moments of every order; the same holds for the Gaussian and compound-Poisson convolution in \eqref{levysymbol}. We use the same notation for any $\nu_t$-integrable function:
$$
\langle f\rangle_{\nu_t}:=\int_{\R^d}f(\phi)\,\nu_t(d\phi).
$$
\end{definition}

We extend the pairing
\eqref{eq:nu-pairing} { by this integral} to the monomial
$f(\phi)=\phi^\alpha$. Since
\begin{equation}\label{eq:fourier-pairing}
\widehat{\nu_t}(\xi)
=
\left\langle
e^{i\langle\xi,\phi\rangle}
\right\rangle_{\nu_t}
=
e^{t\Psi(-i\xi)},
\end{equation}
{ differentiation under the integral} with respect to $\xi$ gives
\begin{equation}\label{eq:fourier-moment-derivative}
\partial_\xi^\alpha\widehat{\nu_t}(\xi)
=
i^{|\alpha|}
\left\langle
\phi^\alpha e^{i\langle\xi,{ \phi}\rangle}
\right\rangle_{\nu_t}.
\end{equation}

Setting $\xi=0$ in equation \eqref{eq:fourier-moment-derivative}, we obtain
\begin{equation}\label{distmoment}
\langle\phi^\alpha\rangle_{\nu_t}
=
i^{-|\alpha|}
\left.
\partial_\xi^\alpha e^{t\Psi(-i\xi)}
\right|_{\xi=0}.
\end{equation}

This is called the $\alpha$-th
distributional moment of $\nu_t$. { Under Assumption \ref{assumptions}, this is an identity of ordinary moments. For a general tempered distribution, the derivative formula is a formal extension and is not a canonical pairing with a non-Schwartz function.} Since every polynomial is a finite linear combination of monomials, the
distributional moments determine its pairing with $\nu_t$. In
particular, for the polynomial $V$ defined in \eqref{Vdef2}, we set
\begin{equation}\label{polymoment}
\langle V\rangle_{\nu_t}
:=
\sum_{|\alpha|\leq\bar p}
c_\alpha\langle\phi^\alpha\rangle_{\nu_t}{ .}
\end{equation}

Denote $V_\phi(\varphi)=V(\phi-\varphi)$. 
We now have the following theorem

\begin{theorem}\label{extend} For every $m\in\N_0$, the $m^\text{th}$ derivative of the solution $\Phi_t^\beta$ extends to zero, i.e

\begin{equation}
\lim_{\beta\rightarrow0}\pa^m_\beta \Phi^\beta_t=f^{(m)}(0)\langle V^m_\phi\rangle_{\nu_t}\qquad\text{ in }\Sspace'(\R^d),
\end{equation}
\end{theorem}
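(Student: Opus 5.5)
We will combine the convolution representation of Theorem \ref{existuniq} with the parameter differentiability of Theorem \ref{infdiff}, and then pass to the limit $\beta\to0$ under the integral against the probability measure $\nu_t$.

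\textbf{Step 1 (differentiating the solution).} Under Assumption \ref{assumptions}(3), $\nu_t$ is a probability measure with finite moments of all orders. Hence
\[
\Phi_t^\beta(\phi)=\int_{\R^d} f\bigl(\beta V(\phi-\varphi)\bigr)\,\nu_t(d\varphi)=\langle f(\beta V_\phi)\rangle_{\nu_t}.
\]
For fixed $\phi$, the integrand satisfies $\pa_\beta^m f(\beta V_\phi(\varphi))=V_\phi^m(\varphi)f^{(m)}(\beta V_\phi(\varphi))$. This is \eqref{differen} read pointwise, and it is valid for all real $\beta$ by the ordinary chain rule. It is dominated by $\|f^{(m)}\|_\infty |V_\phi(\varphi)|^m$, which is $\nu_t$-integrable. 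We may therefore differentiate under the integral and obtain
\[
\pa_\beta^m\Phi_t^\beta(\phi)=\bigl\langle V_\phi^m f^{(m)}(\beta V_\phi)\bigr\rangle_{\nu_t}.
\]
This agrees with the $\mathcal S'$-derivative: by Theorem \ref{infdiff} the map $\beta\mapsto f_\beta$ is smooth in $\Sspace$ on $\R^*$, and convolution with $\nu_t$ is continuous from $\Sspace$ into $\mathcal O_M\subset \Sspace'$, so the distributional derivative is the function displayed above.

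\textbf{Step 2 (pointwise limit and a uniform bound).} By continuity of $f^{(m)}$, the integrand converges pointwise to $V_\phi^m(\varphi)f^{(m)}(0)$ as $\beta\to0$. Dominated convergence with the same majorant then gives
\[
\pa_\beta^m\Phi_t^\beta(\phi)\to f^{(m)}(0)\langle V_\phi^m\rangle_{\nu_t}
\]
for every $\phi$. We also need a bound uniform in $\beta$ and polynomial in $\phi$. Since $V$ is a polynomial of degree $\bar p$, we have $|V_\phi(\varphi)|^m\le C(1+|\phi|)^{m\bar p}(1+|\varphi|)^{m\bar p}$. This yields
\[
\sup_{\beta}\bigl|\pa_\beta^m\Phi_t^\beta(\phi)\bigr|\le C'\|f^{(m)}\|_\infty(1+|\phi|)^{m\bar p}\int(1+|\varphi|)^{m\bar p}\,\nu_t(d\varphi)<\infty .
\]

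\textbf{Step 3 (convergence in $\mathcal S'$).} Pair with a test function $u\in\Sspace(\R^d)$. The function $(1+|\phi|)^{m\bar p}|u(\phi)|$ is integrable, so a second application of dominated convergence gives
\[
\int \pa_\beta^m\Phi_t^\beta\, u\,d\phi\to f^{(m)}(0)\int\langle V^m_\phi\rangle_{\nu_t}u(\phi)\,d\phi .
\]
This is weak-$*$ convergence in $\Sspace'(\R^d)$, which is what the statement asserts. One can also note that $\langle V^m_\phi\rangle_{\nu_t}$ is a polynomial in $\phi$, obtained by expanding $V(\phi-\varphi)^m$ and using the moments \eqref{distmoment}, so the limit is a genuine tempered distribution.

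\textbf{Main obstacle.} The delicate point is the identification in Step 1. Theorem \ref{infdiff} only gives smoothness in $\Sspace$ for $\beta\neq0$, and the Schwartz seminorms of $V^m f^{(m)}(\beta V)$ blow up as $\beta\to0$, because coercivity no longer controls decay once $\beta V$ is small. A limit taken in $\Sspace$ therefore fails. One must work directly with the measure representation of $\nu_t$ and its finite moments, and take the limit only in the weak topology of $\Sspace'$, as above. If one prefers to avoid the measure structure, the fallback is to write $\langle\nu_t, V_\phi^m f^{(m)}(\beta V_\phi)\rangle$ using the fact that $\widehat{\nu_t}$ is analytic near $0$. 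That route is harder.
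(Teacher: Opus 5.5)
Your proof is correct, but it takes a different route from the paper's. The paper works on the Fourier side in $\Sspace'(\R^d)$. It differentiates $\widehat{\Phi_t^\beta}=e^{t\Psi(-i\xi)}\widehat{f_\beta}$ in $\beta$ via Theorem \ref{infdiff}, and proves $V^mf^{(m)}(\beta V)\to f^{(m)}(0)V^m$ in $\Sspace'$ by dominated convergence against test functions (Lemma \ref{physical}). It then pushes this limit through the Fourier transform, multiplication by the $\mathcal O_M$ multiplier $e^{t\Psi(-i\xi)}$, and the inverse Fourier transform, all of which are continuous on $\Sspace'$. The probability-measure structure of $\nu_t$ enters only at the end, to identify $\F^{-1}\bigl(e^{t\Psi(-i\xi)}\widehat{V^m}\bigr)$ with $\nu_t*V^m=\langle V_\phi^m\rangle_{\nu_t}$.

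You instead use the measure structure and finite moments from the start. You apply dominated convergence first in $\varphi$ against $\nu_t$, then in $\phi$ against the test function, with the majorant from $1+|\phi-\varphi|\le(1+|\phi|)(1+|\varphi|)$. This gives more than the statement asks for:
\begin{itemize}
\item $\beta\mapsto\Phi_t^\beta(\phi)$ is $C^\infty$ on all of $\R$ for each fixed $\phi$, so ``extends to zero'' holds literally.
\item You get pointwise convergence together with a $\beta$-uniform polynomial bound.
\item The limit appears directly as a moment, without handling $\widehat{V^m}$, which is a combination of derivatives of $\delta$.
\end{itemize}

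The paper's route is softer and more general. Its convergence step uses only $e^{t\Psi(-i\xi)}\in\mathcal O_M$, so that step would survive for a kernel that is merely tempered, with the limit read through the distributional moments \eqref{distmoment}.

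Your closing remark that avoiding the measure structure would be ``harder'' does not hold. The Fourier-side argument needs neither analyticity of $\widehat{\nu_t}$ nor anything beyond Lemma \ref{physical} and standard continuity properties of $\F$ and $\mathcal O_M$-multiplication on $\Sspace'$. It takes only a few lines.
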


\medskip
The proof of Theorem \ref{extend} is based on the following lemma.

\begin{lemma}\label{physical}
Under Assumptions \ref{assumptions} and Theorem \ref{fschwartz} 
\begin{equation}\label{physical-limit}
\lim_{\beta\to0} V^m f^{(m)}(\beta V)=f^{(m)}(0)V^m
\qquad\text{ in }\Sspace'(\R^d), 
\end{equation}
\end{lemma}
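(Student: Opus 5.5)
We prove \eqref{physical-limit} by pairing with a fixed test function $u\in\Sspace(\R^d)$ and applying dominated convergence. The claim is that
\begin{equation*}
\int_{\R^d} V^m(\phi) f^{(m)}(\beta V(\phi))\,u(\phi)\,d\phi \longrightarrow f^{(m)}(0)\int_{\R^d} V^m(\phi)u(\phi)\,d\phi, \qquad \beta\to0 .
\end{equation*}
Both sides are well defined for $\beta\neq0$. The left side involves a Schwartz function (this follows from Theorem \ref{fschwartz} applied to $f^{(m)}\in\Sspace(\R)$ together with the polynomial factor $V^m$). The right side involves the polynomial $V^m$, which lies in $\mathcal O_M(\R^d)\subset\Sspace'(\R^d)$. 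Hence both functions define tempered distributions through integration against $u$. Convergence in $\Sspace'$ is taken in the weak-$*$ sense, so pointwise convergence of these pairings for every $u$ is exactly what must be shown.

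\textbf{Step 1 (pointwise limit).} For fixed $\phi$, we have $\beta V(\phi)\to0$ as $\beta\to0$. Since $f^{(m)}$ is continuous, the integrand converges pointwise to $f^{(m)}(0)V^m(\phi)u(\phi)$.

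\textbf{Step 2 (domination).} Because $f\in\Sspace(\R)$, the derivative $f^{(m)}$ is bounded, say $\|f^{(m)}\|_\infty=:M_m<\infty$. This gives the bound
\begin{equation*}
|V^m(\phi) f^{(m)}(\beta V(\phi)) u(\phi)|\le M_m |V(\phi)|^m|u(\phi)|,
\end{equation*}
uniformly in $\beta$. Moreover, $V$ is a polynomial of degree $\bar p$, so $|V(\phi)|^m\le C(1+|\phi|)^{m\bar p}$. Choosing $N=m\bar p+d+1$, we get $|u(\phi)|\le \|u\|_{0,N}(1+|\phi|)^{-N}$. The dominating function $M_mC\|u\|_{0,N}(1+|\phi|)^{-d-1}$ is therefore integrable. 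Dominated convergence then yields the limit.

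\textbf{Main point of care.} The only subtlety is that the limit must be taken in $\Sspace'$ and not in $\Sspace$. Convergence in $\Sspace$ fails, because the limit $f^{(m)}(0)V^m$ is not Schwartz. The lower bound $|\beta V|\ge c|\beta||\phi|^q$ used in Theorem \ref{fschwartz} degenerates as $\beta\to0$, so no uniform Schwartz estimates in $\beta$ are available. Working with weak-$*$ convergence avoids this entirely, since only the uniform sup bound on $f^{(m)}$ is needed. Coercivity of $V$ is not needed for the limit itself; it only guarantees, via Theorem \ref{fschwartz}, that the pre-limit objects lie in $\Sspace$.
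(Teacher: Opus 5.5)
Your proof is correct and follows the paper's argument: pair with a Schwartz test function, observe pointwise convergence, and dominate the integrand by $\|f^{(m)}\|_\infty|V^m u|\in L^1$ so that dominated convergence applies. Your explicit choice $N=m\bar p+d+1$ just spells out the integrability claim that the paper states without detail.
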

for every $m\in\N_0$.
\begin{proof}
For each $\psi\in\mathcal S(\R^d)$, the integrand $V^m(\phi)f^{(m)}(\beta V(\phi))\psi(\phi)$ converges pointwise to \\$f^{(m)}(0)V^m(\phi)\psi(\phi)$ and is bounded by $\|f^{(m)}\|_\infty|V^m\psi|\in L^1$. Hence, \eqref{physical-limit} follows by a direct application of the dominated convergence theorem.

\end{proof}

\subsection{Proof of Theorem \ref{extend}}

By Theorem \ref{existuniq}, for $\beta\neq0$
\begin{equation}
\Phi_t^\beta(\phi)=(\nu_t*f_\beta)(\phi), \qquad \phi\in\R^d,
\end{equation}
and by Fourier transform and equation \eqref{nu}
\begin{equation}\label{fourier-sol}
\widehat{\Phi_t^\beta}(\xi)=e^{t\Psi(-i\xi)}\,\widehat{f_\beta}(\xi), \qquad \xi\in\R^d.
\end{equation}

Hence, by Theorem \ref{nutemp}, Theorem \ref{infdiff}, and the continuity of Fourier transform, we differentiate with respect to $\beta$
\begin{equation}\label{dbeta-fourier}
\pa_\beta^m\widehat{\Phi_t^\beta}
=\widehat{\pa_\beta^m\Phi_t^\beta}
=
e^{t\Psi{ (-i\xi)}}\,{ \widehat{V^m f^{(m)}(\beta V)}}.
\end{equation}

By \cite{horvath} (Chap. 4, Sec. 11, Prop. 5) and Lemma \ref{physical}
\begin{equation}
\lim_{\beta\rightarrow0}e^{t\Psi{ (-i\xi)}}\,\widehat{V^m f^{(m)}(\beta V)}
=
f^{(m)}(0)\,e^{t\Psi{ (-i\xi)}}\,\widehat{V^m}
\qquad\text{in }\Sspace'(\R^d),
\end{equation}
that is
\begin{equation}\label{lastconv}
\lim_{\beta\rightarrow0}\widehat{\pa_\beta^m\Phi_t^\beta}
=
f^{(m)}(0)\,e^{t\Psi{ (-i\xi)}}\,\widehat{V^m}
\qquad\text{in }\Sspace'(\R^d).
\end{equation}

By continuity of the inverse Fourier transform operator, we obtain from equation \eqref{lastconv}
\begin{equation}
\lim_{\beta\rightarrow0}\pa_\beta^m\Phi_t^\beta
=
f^{(m)}(0)\,\F^{-1}\!\big(e^{t\Psi{ (-i\xi)}}\,\widehat{V^m}\big)
\qquad\text{in }\Sspace'(\R^d).
\end{equation}

{ The finite polynomial moments of $\nu_t$ give
$$
\mathcal F^{-1}\!\bigl(e^{t\Psi(-i\xi)}\widehat{V^m}\bigr)(\phi)
=(\nu_t*V^m)(\phi)
=\int_{\R^d}V(\phi-\varphi)^m\,\nu_t(d\varphi)
=\langle V_\phi^m\rangle_{\nu_t}.
$$

The convolution is well defined and is a polynomial in $\phi$.}
Hence

\begin{equation}
\lim_{\beta\rightarrow0}\pa^m_\beta \Phi^\beta_t=f^{(m)}(0)\langle V^m_\phi\rangle_{\nu_t}\qquad\text{ in }\Sspace'(\R^d).
\end{equation}
which proves the theorem.\hfill $\qed$

Theorem \ref{extend} extends the solution and its derivatives to zero. The solution therefore admits an asymptotic expansion around zero in the parameter $\beta$.

\begin{theorem}\label{expansion} The formal $V-$expansion of the solution $\Phi_t^\beta$ is given by 

\begin{equation}
    \Phi_t^\beta(\phi){ \mathrel{\sim}}\sum_{m=0}^\infty \beta^m\frac{f^{(m)}(0)}{m!}\langle V^m_\phi(\varphi)\rangle_{\nu_t}, \qquad { \beta\to0}, 
\end{equation}
where $\nu_t$ is the tempered distribution given by equation \eqref{nu}.
The expansion holds for fixed $t\geq0$ and $\phi\in\R^d$, and in $\mathcal S^{\prime}(\R^d)$. 

\end{theorem}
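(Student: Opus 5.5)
The plan is to obtain the expansion as a Taylor formula in $\beta$ with integral remainder, in which the coefficients are the limits supplied by Theorem \ref{extend}. The argument runs at the level of the initial data, where Theorem \ref{infdiff} gives smoothness in $\beta$, and is then transported to $\Phi_t^\beta$ by convolution with $\nu_t$.

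First, for $\beta\neq0$ and every fixed $M\in\N$, I apply Taylor's formula on $[0,\beta]$ pointwise in $\phi$ to the scalar function $\sigma\mapsto f(\sigma V(\phi))$. This function is smooth on all of $\R$, including $\sigma=0$, because $f\in\Sspace(\R)$. This gives
\begin{equation*}
f(\beta V(\phi))=\sum_{m=0}^{M}\frac{\beta^m}{m!}f^{(m)}(0)V^m(\phi)+\frac{\beta^{M+1}}{M!}\int_0^1(1-\theta)^M V^{M+1}(\phi)f^{(M+1)}(\theta\beta V(\phi))\,d\theta .
\end{equation*}
Next I convolve with $\nu_t$. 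By Definition \ref{def:nu-pairing}, $\nu_t$ is a probability measure with finite moments of all orders. The polynomial terms therefore give $\langle V_\phi^m\rangle_{\nu_t}$, exactly as in the last step of the proof of Theorem \ref{extend}. By Fubini, justified by the bound $|V^{M+1}f^{(M+1)}|\le \|f^{(M+1)}\|_\infty |V|^{M+1}$ and the integrability of $|V(\phi-\cdot)|^{M+1}$ against $\nu_t$, the remainder becomes
\begin{equation*}
R_M^\beta(\phi)=\frac{\beta^{M+1}}{M!}\int_0^1(1-\theta)^M\int_{\R^d}V^{M+1}(\phi-\varphi)f^{(M+1)}(\theta\beta V(\phi-\varphi))\,\nu_t(d\varphi)\,d\theta .
\end{equation*}
With the same bound this yields, for fixed $t$ and $\phi$,
\begin{equation*}
|R_M^\beta(\phi)|\le \frac{|\beta|^{M+1}}{(M+1)!}\|f^{(M+1)}\|_\infty\langle |V_\phi|^{M+1}\rangle_{\nu_t}=O(|\beta|^{M+1}),
\end{equation*}
which is the pointwise asymptotic statement. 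The coefficients agree with $\lim_{\beta\to0}\pa_\beta^m\Phi_t^\beta/m!$ from Theorem \ref{extend}, so the series is indeed the one built from the extended derivatives.

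For the statement in $\Sspace'(\R^d)$, I pair $R_M^\beta$ with a test function $\psi\in\Sspace(\R^d)$. The function $\phi\mapsto\langle|V_\phi|^{M+1}\rangle_{\nu_t}$ is a polynomial in $\phi$, because it is dominated by a finite combination of moments of $\nu_t$ times powers of $|\phi|$. Hence $|\langle R_M^\beta,\psi\rangle|\le C_{M,t}|\beta|^{M+1}\|\psi\|_{0,N}$ for suitable $N$, uniformly over bounded subsets of $\Sspace$. This gives $\beta^{-M}R_M^\beta\to0$ in $\Sspace'$, even in the strong topology.

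The main obstacle is the justification of Fubini and of the polynomial growth in $\phi$. It rests on the finiteness of all moments of $\nu_t$, which follows from the entire analytic Fourier transform of $r$. I would verify this through \eqref{distmoment} together with the expansion of $(a+b)^{M+1}$ applied to $V(\phi-\varphi)$. One must also stress that the series is only asymptotic, not convergent; its summability is left to the Borel analysis in Section 6.
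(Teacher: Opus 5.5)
Your proposal is correct and follows essentially the same route as the paper: Taylor's formula with integral remainder for $\sigma\mapsto f(\sigma V)$, integrated against the probability measure $\nu_t$, gives exactly the paper's bound $\frac{|\beta|^{N+1}}{(N+1)!}\|f^{(N+1)}\|_\infty\int_{\R^d}|V(\phi-\varphi)|^{N+1}\,\nu_t(d\varphi)$, and its polynomial growth in $\phi$ yields the result both pointwise and after pairing with Schwartz functions. One small correction: because of the absolute value, $\phi\mapsto\langle|V_\phi|^{M+1}\rangle_{\nu_t}$ is only polynomially bounded, not a polynomial itself, but that bound is all your $\Sspace'$ estimate needs.
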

\begin{proof}
    The proof follows from Theorem \ref{infdiff}, Theorem \ref{extend}, and { Taylor's formula with integral remainder. For each $N\in\N_0$,
$$
\left|\Phi_t^\beta(\phi)-\sum_{m=0}^N
\frac{f^{(m)}(0)}{m!}\beta^m\langle V_\phi^m\rangle_{\nu_t}\right|
\leq\frac{|\beta|^{N+1}}{(N+1)!}\|f^{(N+1)}\|_\infty
\int_{\R^d}|V(\phi-\varphi)|^{N+1}\,\nu_t(d\varphi).
$$

The integral is finite and grows at most polynomially in $\phi$. This proves the finite-order asymptotics pointwise and after pairing with a Schwartz function.}
\end{proof} 

\begin{remark}
    By Theorem \ref{expansion}, the solution $\Phi_t^\beta$ admits a formal expansion whose coefficients are distributional moments of the powers of $V_\phi$. These moments can be reorganized combinatorially and represented as sums indexed by a specific class of graphs. Each graph corresponds to one contribution in the expansion, and its value is determined by a fixed evaluation procedure, referred to as the \emph{Feynman rules}.

\end{remark}

\section{Graphical Expansion}

 In this section, we introduce the class of graphs associated with the moments
$\langle V_\phi^m\rangle_{\nu_t}$, define their Feynman rules, and rewrite the formal $V$-expansion of $\Phi_t^\beta$ as a graphical expansion.

Let $m\in\mathbb N$, we denote $[m]:=\{1,\dots,m\}$. For a partition $\pi\in\mathcal P([m])$ of $[m]$, 
\begin{equation}
\pi=\{B_1,\dots,B_{|\pi|}\},
\end{equation}
we denote $|\pi|$ as the number of blocks in $\pi$, and $|B|$ as the cardinality of each block $B\in\pi$, and for $i,j\in [m]$, we write $i\sim_\pi j$ whenever $i$ and $j$ belong to the same block of $\pi$.

Given two partitions $\pi$ and $\sigma$, we can compare them according to whether one is finer or coarser than the other. The following definition formalizes this comparison.
\begin{definition}
Let $\sigma,\pi\in\mathcal P([m])$. 
\begin{enumerate}

\item We say that $\sigma$ is finer than $\pi$ (or that $\pi$ is coarser than $\sigma$) and we denote it $\sigma\le \pi$ whenever every block of $\sigma$
is contained in some block of $\pi$.
Equivalently, $\pi$ can be obtained by repeatedly merging blocks of $\sigma$.

\item We define the join of $\sigma$ and $\pi$, denoted $\sigma\vee\pi$, as the the smallest partition which is coarser than both $\sigma$ and $\pi$.

\item If $\sigma\le \pi$, we define the \textit{Möbius inversion function} $\mu(\sigma,\pi)$
\begin{equation}
\mu(\sigma,\pi)
=
(-1)^{|\sigma|-|\pi|}
\prod_{B\in\pi}
\bigl(k_B-1\bigr)!,
\end{equation}
where $k_B$ is the number of blocks of $\sigma$ contained in $B$.
If $\sigma\not\le\pi$, then $\mu(\sigma,\pi)=0$.
\end{enumerate}
\end{definition}

\begin{proposition}\label{prop:refinement-poset}
Let $m\in\N$. The relation "$\le$" on $\mathcal P([m])$ is a partial order.
\end{proposition}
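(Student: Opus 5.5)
The plan is to verify the three axioms of a partial order — reflexivity, antisymmetry and transitivity — directly from the definition: $\sigma\le\pi$ means that every block of $\sigma$ is contained in some block of $\pi$. Throughout we use that the blocks of a partition of $[m]$ are nonempty, pairwise disjoint, and cover $[m]$.

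\emph{Reflexivity.} Every block $B\in\pi$ is contained in itself, so $\pi\le\pi$.

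\emph{Transitivity.} Suppose $\sigma\le\tau$ and $\tau\le\pi$, and take $B\in\sigma$. There is a block $C\in\tau$ with $B\subseteq C$, and a block $D\in\pi$ with $C\subseteq D$. Hence $B\subseteq D$, so $\sigma\le\pi$.

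\emph{Antisymmetry.} This is the step needing an actual argument. Assume $\sigma\le\pi$ and $\pi\le\sigma$, and take $B\in\sigma$.
\begin{itemize}
\item By $\sigma\le\pi$ there is $C\in\pi$ with $B\subseteq C$.
\item By $\pi\le\sigma$ there is $B'\in\sigma$ with $C\subseteq B'$.
\item Then $B\subseteq B'$. Since $B$ is nonempty, $B\cap B'\neq\emptyset$, and disjointness of distinct blocks of $\sigma$ forces $B=B'$.
\item Therefore $B=C\in\pi$.
\end{itemize}
This shows $\sigma\subseteq\pi$ as sets of blocks. The symmetric argument gives $\pi\subseteq\sigma$, hence $\sigma=\pi$.

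An equivalent route, which could replace the block-level argument, is to rephrase $\sigma\le\pi$ as: $i\sim_\sigma j$ implies $i\sim_\pi j$. The order then becomes inclusion of the corresponding equivalence relations on $[m]$. The three axioms are inherited from set inclusion, and antisymmetry follows because a partition is determined by its equivalence relation. I expect no real obstacle beyond correctly using nonemptiness and disjointness in the antisymmetry step.
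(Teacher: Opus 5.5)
Your proof is correct and follows the same route as the paper: a direct check of reflexivity, transitivity and antisymmetry from the block-containment definition. Your antisymmetry step is more explicit than the paper's one-line appeal to disjointness and covering, since you use nonemptiness of blocks to force $B=B'$ and hence $B=C\in\pi$.
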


\begin{proof}
Reflexivity: every block of $\sigma$ is contained in itself, so $\sigma\le\sigma$.
Transitivity: if $\sigma\le\pi$ and $\pi\le\rho$, then every block of $\sigma$ lies in a block of $\pi$,
and that block lies in a block of $\rho$, hence $\sigma\le\rho$.
Antisymmetry: if $\sigma\le\pi$ and $\pi\le\sigma$, then each block of $\sigma$ is contained in a block of $\pi$
and conversely; because the blocks are disjoint and cover $[m]$, this forces $\sigma=\pi$.
\end{proof}

The truncated moments of $\nu_t$ are defined through the logarithm of the Fourier transform by the following definition

\begin{definition}\label{distruncdef}
For $\alpha=(\alpha_1,\cdots,\alpha_d)\in \N_0^d$ { with $|\alpha|\geq1$}, the $\alpha$-th truncated distributional moment of $\nu_t$ is defined by
\begin{equation}\label{distrunc}
\langle \phi^\alpha\rangle_{\nu_t}^{T}:=\langle { \underbrace{\phi_1,\ldots,\phi_1}_{\alpha_1},\ldots,\underbrace{\phi_d,\ldots,\phi_d}_{\alpha_d}}\rangle_{\nu_t}^{T}
=
t\,{ (-1)^{|\alpha|}}\pa_\xi^\alpha\Psi(\xi)\big|_{\xi=0},
\end{equation}
that is
\begin{equation}
\langle \phi^\alpha\rangle_{\nu_t}^{T}=t{ (-1)^{|\alpha|}|\alpha|!}C^{(|\alpha|)}_{{ \underbrace{1\cdots1}_{\alpha_1}\cdots\underbrace{d\cdots d}_{\alpha_d}}}.
\end{equation}
\end{definition}

\begin{theorem}\label{cluster}
Let $\phi=(\phi_1,\dots,\phi_d)\in\R^d$, and let
$i_1,\dots,i_m\in\{1,\dots,d\}$. Then, for every $m\ge1$,
\begin{equation}\label{linked}
\Big\langle \prod_{r=1}^m \phi_{i_r}\Big\rangle_{\nu_t}
=
\sum_{\pi\in\mathcal P([m])}
\prod_{B\in\pi}
\Big\langle \prod_{r\in B}\phi_{i_r}\Big\rangle_{\nu_t}^T.
\end{equation}
and
\begin{equation}\label{reverse-linked}
\Big\langle \prod_{r=1}^m \phi_{i_r}\Big\rangle_{\nu_t}^T
:=
\sum_{\pi\in\mathcal P([m])}\mu(\pi,\hat 1)
\prod_{B\in\pi}
\Big\langle \prod_{r\in B}\phi_{i_r}\Big\rangle_{\nu_t},
\end{equation}
where $\hat{1}$ is the maximal element of $\mathcal P([m])$, i.e, the coarsest partition.
\end{theorem}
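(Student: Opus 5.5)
The plan is to derive \eqref{linked} from the exponential relation between moments and truncated moments on the Fourier side. Then \eqref{reverse-linked} will follow by Möbius inversion on the partition lattice $(\mathcal P([m]),\le)$, which is a poset by Proposition \ref{prop:refinement-poset}.

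\emph{Step 1 (reduction to distinct variables).} Introduce auxiliary variables $s=(s_1,\dots,s_m)\in\R^m$ near $0$ and set $\xi(s):=\sum_{r=1}^m s_r e_{i_r}\in\R^d$, where $e_j$ is the $j$-th unit vector. By \eqref{eq:fourier-pairing} and the finiteness of all moments (Definition \ref{def:nu-pairing}), differentiation under the integral gives
\begin{equation*}
\Big\langle \prod_{r=1}^m\phi_{i_r}\Big\rangle_{\nu_t}
=(-i)^m\,\pa_{s_1}\cdots\pa_{s_m}\,e^{t\Psi(-i\xi(s))}\Big|_{s=0}.
\end{equation*}
Indeed, $\pa_{s_r}$ acts on $e^{i\langle\xi(s),\phi\rangle}$ as multiplication by $i\phi_{i_r}$. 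In the same way, the chain rule shows that the mixed derivative $\pa_{s_{r_1}}\cdots\pa_{s_{r_k}}$ of $t\Psi(-i\xi(s))$ at $s=0$ equals $t\,\pa_{\xi_{i_{r_1}}}\cdots\pa_{\xi_{i_{r_k}}}$ of $\Psi(-i\xi)$ at $0$. Up to the normalization conventions in Definition \ref{distruncdef}, this is exactly the truncated moment $\langle\prod_{r\in B}\phi_{i_r}\rangle^T_{\nu_t}$ multiplied by $i^{|B|}$, with $B=\{r_1,\dots,r_k\}$. I will fix this constant once: the factor $(-i)^m$ splits as $\prod_B(-i)^{|B|}$, so the phase bookkeeping is consistent across blocks. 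I will interpret the $(-1)^{|\alpha|}$ and $\Psi(\xi)$ in \eqref{distrunc} via $\psi(\xi)=\Psi(-i\xi)$ so that the constants match. This convention-matching is the only delicate point, and I expect it to be the main obstacle (a notational one, not a conceptual one).

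\emph{Step 2 (multivariate Faà di Bruno for the exponential).} Use the combinatorial form of Theorem \ref{faadibruno} with $g=\exp$ and $h(s)=t\Psi(-i\xi(s))$, where $h$ is analytic near $0$ by (P1). For the square-free derivative $\pa_{s_1}\cdots\pa_{s_m}$, the index sets $\mathcal J_{\alpha,k}$ with $\alpha=(1,\dots,1)$ correspond bijectively to set partitions of $[m]$ into $k$ blocks, and the coefficient equals $1$. This gives
\begin{equation*}
\pa_{s_1}\cdots\pa_{s_m}e^{h(s)}=e^{h(s)}\sum_{\pi\in\mathcal P([m])}\prod_{B\in\pi}\pa_{s_B}h(s),
\end{equation*}
which is also easy to prove directly by induction on $m$: differentiating in $s_{m+1}$ either creates the singleton block $\{m+1\}$ or adds $m+1$ to an existing block. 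Evaluating at $s=0$, where $h(0)=t\Psi(0)=0$ because $\Psi(-i\xi)$ in \eqref{levysymbol} vanishes at $\xi=0$, and combining with Step 1 yields \eqref{linked}.

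\emph{Step 3 (inversion).} Formula \eqref{linked}, applied to every sub-multiset and hence to every block, states that $M(\sigma):=\prod_{B\in\sigma}\langle\cdot\rangle_{\nu_t}$ satisfies $M(\sigma)=\sum_{\pi\le\sigma}K(\pi)$, where $K(\pi):=\prod_{B\in\pi}\langle\cdot\rangle^T_{\nu_t}$. This holds because partitions finer than $\sigma$ are exactly products of partitions of its blocks. By Möbius inversion on the finite poset $\mathcal P([m])$, we get $K(\hat1)=\sum_{\pi}\mu(\pi,\hat1)M(\pi)$. It remains to check that the stated $\mu(\sigma,\pi)=(-1)^{|\sigma|-|\pi|}\prod_B(k_B-1)!$ is the Möbius function of this lattice. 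Since intervals $[\sigma,\pi]$ factor as products of full partition lattices $\mathcal P([k_B])$, it suffices to show $\mu(\hat0,\hat1)=(-1)^{k-1}(k-1)!$ on $\mathcal P([k])$. I will verify this by comparing coefficients in $\log\big(\sum_{n\ge0} x^n/n!\big)=\log e^{x}=x$, equivalently via the exponential formula, which is again Step 2 read backwards. This gives \eqref{reverse-linked}, and the case $m=1$ is trivially consistent.
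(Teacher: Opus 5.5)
Your proof is correct, and it does more than the paper does. The paper gives no argument for this theorem; it simply refers to \cite{schwinger} and \cite{peccati} for the classical moment--cumulant formula and its M\"obius-inverted form. You instead derive \eqref{linked} directly from $\widehat{\nu_t}$ through the square-free case of Theorem \ref{faadibruno}. Your identification of $\mathcal J_{\alpha,k}$, $\alpha=(1,\dots,1)$, with partitions into $k$ blocks, all with coefficient $1$, is right. You then obtain \eqref{reverse-linked} by M\"obius inversion, after identifying the paper's explicit $\mu$ with the M\"obius function of $\mathcal P([m])$.

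What your route buys is a check of two things the citation takes for granted. First, $h(0)=t\Psi(0)=0$: this holds for the L\'evy symbol \eqref{levysymbol} but is not forced by (P1)--(P3), and without it \eqref{linked} acquires a factor $e^{t\Psi(0)}$. Second, the normalization in Definition \ref{distruncdef} is the classical cumulant, namely $\pa_{s_B}h(0)=t(-i)^{|B|}\pa^{\alpha_B}\Psi(0)=i^{|B|}\langle\prod_{r\in B}\phi_{i_r}\rangle^T_{\nu_t}$, with $\alpha_B\in\N_0^d$ the multiplicity vector of $(i_r)_{r\in B}$.

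On that second point you need not choose a reading so that the constants match. The second display of Definition \ref{distruncdef}, $t(-1)^{|\alpha|}|\alpha|!\,C^{(|\alpha|)}$, forces $\Psi$ in \eqref{distrunc} to be the series \eqref{pseudo}. With that reading your identity holds exactly. Reading it instead as $\psi(\xi)=\Psi(-i\xi)$ would put a spurious factor $(-i)^m$ into \eqref{linked}.

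Your verification of $\mu(\hat 0,\hat 1)=(-1)^{k-1}(k-1)!$ via $\log e^x=x$ is worth keeping, since the proof of Theorem \ref{logexpgraphthm} later uses that this explicit $\mu$ is the lattice M\"obius function. If you only wanted \eqref{reverse-linked}, there is a shorter path. Apply the same square-free Fa\`a di Bruno formula to $\log$ composed with $s\mapsto e^{t\Psi(-i\xi(s))}$, which equals $1$ at $s=0$. Since $\log^{(k)}(1)=(-1)^{k-1}(k-1)!=\mu(\pi,\hat1)$ for $|\pi|=k$, this gives \eqref{reverse-linked} at once. The citation route, for its part, buys brevity and places the result in the general diagram-formula framework that the later graphical sections rely on.
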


\begin{proof}
For the proof, we refer to \cite{schwinger} and \cite{peccati}.
\end{proof}
\begin{remark}
Theorem \ref{cluster} allows a back and forth transition between moments and truncated moments of the tempered distribution $\nu_t$.
\end{remark}

The formal power series given in Theorem \ref{expansion} can be expanded in terms of the powers of $V$. We introduce the index set $\Omega(p_1,\dots,p_m)$ as follows 
\begin{equation}\label{indexset}
\Omega(p_1,\dots,p_m)
:=
\bigcup_{n=1}^{m}\{(1,n),\dots,(p_n,n)\}.
\end{equation}

For $I_l=\{(a_1,b_1),\dots,(a_q,b_q)\}$, we denote 
\begin{equation}\label{Cil}
C_{I_l}:=C^{(q)}_{{ i_{a_1}^{b_1}\cdots i_{a_q}^{b_q}}},
\end{equation}
where $C^{(k)}_{{ i_1\cdots i_k}}$
are the symmetric tensors of the pseudodifferential operator given in equation \eqref{pseudo}.

\begin{theorem}\label{expansion2}
{ The} solution $\Phi_t^\beta$ of the Cauchy problem
\eqref{problem} admits the asymptotic expansion { as $\beta\to0$}
\begin{equation}
\begin{aligned}
\Phi_t^\beta(\phi)
&{ \mathrel{\sim}}
\sum_{m=0}^{\infty}
\frac{f^{(m)}(0)}{m!}\beta^m
\sum_{p_1,\dots,p_m=0}^{\bar p}
\sum_{\substack{
K\subseteq \Omega\\
I\in\mathcal{P}\bigl(\Omega\setminus K\bigr)\\
I=\{I_1,\dots,I_k\}
}}
\sum_{i^{1}_1,\dots,i^{1}_{p_1}=1}^{d}
\cdots
\sum_{i^{m}_1,\dots,i^{m}_{p_m}=1}^{d}
\lambda^{(p_1)}_{i^{1}_1,\dots,i^{1}_{p_1}}
\cdots
\lambda^{{ (p_m)}}_{i^{m}_1,\dots,i^{m}_{p_m}}
\\
&\qquad\qquad\times
\prod_{(s,q)\in K}
\bigl({ +}\phi_{i^{q}_s}\bigr)
\prod_{l=1}^{k} t\,{ |I_l|!}C_{I_l},
\end{aligned}
\end{equation}
where $\Omega:=\Omega(p_1,\dots,p_m)$.
\end{theorem}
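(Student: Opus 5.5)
The plan is to start from Theorem \ref{expansion} and compute each coefficient $\langle V_\phi^m\rangle_{\nu_t}$ explicitly. The argument has three steps.

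\emph{Step 1: expand the power of $V$.} Write $V_\phi(\varphi)=V(\phi-\varphi)=\sum_{p=0}^{\bar p}\langle\lambda^{(p)},(\phi-\varphi)^{\otimes p}\rangle$. The $m$-th power is then a sum over $p_1,\dots,p_m\in\{0,\dots,\bar p\}$ and over indices $i^q_s$, with $(s,q)$ running through $\Omega=\Omega(p_1,\dots,p_m)$. Each term is
\begin{equation*}
\lambda^{(p_1)}_{i^1_1\cdots i^1_{p_1}}\cdots\lambda^{(p_m)}_{i^m_1\cdots i^m_{p_m}}\prod_{(s,q)\in\Omega}\bigl(\phi_{i^q_s}-\varphi_{i^q_s}\bigr).
\end{equation*}
Expanding the product of binomials amounts to choosing the subset $K\subseteq\Omega$ of positions that carry the external variable $\phi$. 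The remaining positions $\Omega\setminus K$ carry $-\varphi$. Everything here is a finite sum, so integration against $\nu_t$ commutes with it.

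\emph{Step 2: pass from moments to truncated moments.} By Definition \ref{def:nu-pairing}, $\nu_t$ has finite moments of all orders. Hence $\langle\prod_{(s,q)\in\Omega\setminus K}\varphi_{i^q_s}\rangle_{\nu_t}$ is an ordinary moment. Apply \eqref{linked} of Theorem \ref{cluster} with the index set $\Omega\setminus K$ in place of $[m]$. This writes the moment as a sum over partitions $I=\{I_1,\dots,I_k\}\in\mathcal P(\Omega\setminus K)$ of products of truncated moments $\langle\prod_{(a,b)\in I_l}\varphi_{i^b_a}\rangle^T_{\nu_t}$.

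\emph{Step 3: evaluate each block.} Use Definition \ref{distruncdef} together with \eqref{pseudo}. Differentiating $\langle C^{(n)},\xi^{\otimes n}\rangle$ at $\xi=0$ with respect to the $|I_l|$ variables $\xi_{i^{b_1}_{a_1}},\dots$ uses the symmetry of $C^{(n)}$ and gives $|I_l|!\,C_{I_l}$, with $C_{I_l}$ as in \eqref{Cil}. Each truncated moment is therefore $t(-1)^{|I_l|}|I_l|!\,C_{I_l}$. The sign $(-1)^{|I_l|}$ from the definition is to be combined with the $(-1)^{|\Omega\setminus K|}=\prod_l(-1)^{|I_l|}$ coming from the $-\varphi$ factors. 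These cancel exactly, which leaves $\prod_{(s,q)\in K}(+\phi_{i^q_s})\prod_l t\,|I_l|!\,C_{I_l}$.

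Finally, insert this into Theorem \ref{expansion} with the prefactor $\beta^m f^{(m)}(0)/m!$. Since the rearrangement of each coefficient is a finite identity, the asymptotic statement (pointwise and in $\mathcal S'$) is inherited unchanged.

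The main obstacle is the sign and convention bookkeeping in Step 3. The paper's Fourier convention ($e^{+i\langle x,\xi\rangle}$), the relation $\psi(\xi)=\Psi(-i\xi)$, and the $(-1)^{|\alpha|}$ in \eqref{distrunc} must all be checked for consistency with \eqref{distmoment}. A spurious factor $i^{|I_l|}$ or $(-1)^{|I_l|}$ must not survive, and the stated $+\phi$ sign must come out. I would verify this first on the case $m=1$, $V(\phi)=\phi_1$ and $m=1$, $V=\phi_1^2$, comparing against the mean and the variance of $\nu_t$. One should also check that $C_{I_l}$ is well defined as a function of the unordered block $I_l$, which follows from the symmetry of $C^{(q)}$.
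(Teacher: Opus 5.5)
Your proposal is correct and is essentially the paper's own proof. You expand the $m$ copies of $V(\phi-\varphi)$ and let $K\subseteq\Omega$ select the $+\phi$ factors. You apply \eqref{linked} of Theorem \ref{cluster} to the remaining $-\varphi$ factors and use \eqref{distrunc}, so that the two signs $(-1)^{|I_l|}$ cancel and each block contributes $t\,|I_l|!\,C_{I_l}$, before inserting the result into Theorem \ref{expansion}.

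The sign check you flag also goes through. With the convention of \eqref{distmoment}, the joint cumulants are $i^{-|\alpha|}\partial_\xi^\alpha\bigl(t\Psi(-i\xi)\bigr)\big|_{\xi=0}=t(-1)^{|\alpha|}\partial^\alpha\Psi(0)$, which is exactly \eqref{distrunc}. The symmetry of $C^{(q)}$ then gives the factor $|I_l|!$.
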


\begin{proof}
First, we expand the $m$ labeled copies of $V(\phi-\varphi)$ using the definition of the polynomial $V$ in \eqref{Vdef}. Every  $K\subseteq\Omega$ selects the factors $+\phi_i$ while the remaining factors $-\varphi_i$ are grouped by a partition $I$ using Theorem \ref{cluster}. By \eqref{distrunc},  a block $I_l$ contributes
\begin{equation}
\big\langle-\varphi_{i_a}:a\in I_l\big\rangle_{\nu_t}^T
=(-1)^{|I_l|}t(-1)^{|I_l|}|I_l|!C_{I_l}
=t|I_l|!C_{I_l}.
\end{equation}

Then, summing over $K$, $I$, the degrees $p_q$, and the indices gives the coefficient of $\beta^m$. Finally, we conclude via Theorem \ref{expansion}.
\end{proof}

\begin{definition}
    A (finite) graph is a pair $G=(V,E)$, where $V$ is a set of vertices and 
\begin{equation}
{ E\text{ is a finite family of pairs }\{u,v\},\qquad u,v\in V,\ u\neq v.}\end{equation}

Different edges may have the same endpoints. Denote by $E(G)$ the edge { family} of $G$, and by $V(G)$ the vertex set of $G$.
If no loops or multiple edges are allowed, $G$ is called a \emph{simple} graph.

In such graphs, we call a path, any sequence of distinct vertices $(v_0,\dots,v_k)$ such that 

\begin{equation}
\{v_{i-1},v_i\}\in E\quad  \text{for all } i=1,\dots,k.
\end{equation}
\end{definition}

For more details about graph theory concepts, we refer the reader to \cite{bollobas}, \cite{oystein}, and \cite{serre}.

\begin{definition}\label{Feyndef}
A \emph{generalized amputated Feynman graph} is a graph with three types of vertices:
\emph{inner full}, \emph{inner empty}, and \emph{outer empty} vertices.
Inner full vertices are distinguishable and have distinguishable legs; empty vertices are non-distinguishable and have non-distinguishable legs.
Each outer empty vertex is met by exactly one edge. { Each inner empty vertex is met by at least one edge. Every edge joins an inner full vertex to an empty vertex. Parallel edges are distinguished by the labels of their legs at the full vertex.}

Fix $m\in\mathbb N_0$. Let $p_i$, $1\leq i \leq m$, be the numbers of legs of the $m$ inner full vertices, with ${ 0\leq}p_j\le \bar p$ and $\lambda^{(p_j)}\neq 0$.
The collection of all such graphs is denoted by ${\mathcal F}(m)$.
\end{definition}

Let $\Omega(p_1,\dots,p_m)$ be as in equation \eqref{indexset}. For each $j=1,\dots,m$, we label the legs of the $j$th inner full vertex by
\begin{equation}\label{pj}
(1,j),\dots,(p_j,j),
\end{equation}
so that the set of all legs of inner full vertices is in bijection with $\Omega(p_1,\dots,p_m)$. 

We will now construct a one to one correspondence between the pairs $(K,I)$ and the Feynman amputated graphs $\mathcal F(m)$.
Let $G$ be a generalized amputated Feynman graph with $m$ inner full vertices.
\begin{enumerate}
\item {Define $K\subseteq \Omega(p_1,\dots,p_m)$ as the subset corresponding to the legs of inner full vertices of $G$ { that end at outer empty vertices, }{ with labels given in equation \eqref{pj}.}}
\item Let $k$ be the number of inner empty vertices of $G$. For each $q=1,\dots,k$, let
$I_q\subseteq \Omega(p_1,\dots,p_m)$, $1\leq q \leq k$, be the subset corresponding to those legs of inner full vertices that run into the $q$th inner empty vertex, and set
\begin{equation}
I:=\{I_1,\dots,I_k\}.
\end{equation}
Then $I\in \mathcal P(\Omega(p_1,\dots,p_m)\setminus K)$. 
\end{enumerate}

Conversely, let $K\subseteq \Omega(p_1,\dots,p_m)$ and let
$I\in \mathcal P(\Omega(p_1,\dots,p_m)\setminus K)$ with $I=\{I_1,\dots,I_k\}$ be given.
Construct a graph $G\in {\mathcal F}(m)$ as follows:
\begin{enumerate}
\item Draw $m$ inner full vertices with $p_1,\dots,p_m$ legs, draw $k$ inner empty vertices with $|I_1|,\dots,|I_k|$ legs, and draw $|K|$ outer empty vertices. 
\item For each $q=1,\dots,k$, connect the legs of the $q$th inner empty vertex to the legs of inner full vertices corresponding to the elements of $I_q$. 
\item Connect the legs of the outer empty vertices to the legs of inner full vertices corresponding to the elements of $K$.
\end{enumerate}

\begin{remark} The above construction establishes the one-to-one correspondence between pairs $(K,I)$ and graphs in ${\mathcal F}(m)$, see \cite{main}, Lemma 2.9 for more details. 
\end{remark}
\begin{definition}\label{feynman}
Let $G\in{\mathcal F}(m)$, and let $p_1,\dots,p_m$ denote the numbers of legs attached to the inner full vertices of $G$.
For $\phi\in\R^d$ and $t{ \geq}0$, the value $V[G](t,\phi)\in\R$ is defined according to the following procedure:
\begin{enumerate}
\item To each leg of an inner full vertex, assign an index $i_a\in\{1,\dots,d\}$, where $a\in\Omega(p_1,\dots,p_m)$ labels that leg. 
\item For each outer empty vertex, multiply by the factor $({ +}\phi_{i_b})$, where $b\in\Omega(p_1,\dots,p_m)$ is the label of the leg of the inner full vertex that is directly connected to that outer empty vertex. 
\item For each inner empty vertex with $l$ legs, multiply by the factor
\begin{equation}
    t\,{ l!}C_J \;=\; t\,{ l!}C{ ^{(l)}}_{i_{a_1}\cdots i_{a_l}},
\end{equation}
where $J=\{a_1,\dots,a_l\}\subseteq\Omega(p_1,\dots,p_m)$ is the subset of labels corresponding to the $l$ legs of inner full vertices directly connected to that inner empty vertex. 
\item For the $j$th inner full vertex (with $p_j$ legs), multiply by the factor
\begin{equation}
\lambda^{(p_j)}_{i^j_1\cdots i^j_{p_j}}
\end{equation}
and sum over all assignments of the indices $i^j_1,\dots,i^j_{p_j}\in\{1,\dots,d\}$. 
\end{enumerate}
\end{definition}
\begin{remark}\hfill
\begin{enumerate}

\item  We use the convention that $\mathcal F(0)$ consists of the empty graph only, denoted $\varnothing$, and that
\begin{equation}
V[\varnothing](t,\phi)=1.
\end{equation}

\item By construction, the value of a graph $G$ is obtained by multiplying the local factors associated with { all its vertices} and then summing over all assignments of indices. It is therefore natural to expect this value to factorize when the graph splits into disjoint components.
\end{enumerate}
\end{remark}

\begin{example}
    Consider the third order expansion $G\in \mathcal F(3)$ with the following partition $(K,I)$, $I=\{I_1,I_2\}$ for $p_1=3, p_2=2,p_3=4$: 
    \begin{equation}
        K=\{{ (3,1),(3,3),(4,3)}\}\quad I_1=\{(1,1),(1,2),(2,1)\}\quad I_2=\{(2,2),{ (1,3),(2,3)}\}.
    \end{equation} 

Then the associated graph is given by Figure (\ref{graph1}):
\begin{figure}[h]
    \centering
\begin{tikzpicture}
    \node[node] (p1) at (0,0) {$\bullet$};
    \node[node] (p2) at (-2,0) {$\bullet$};
    \node[node] (p3) at (2,0) {$\bullet$};
    \node[O] (I1) at (-1,0) {};
    \node[O] (I2) at (1,0) {};
    \node (I11) at (-1,-0.3) {$I_1$};
    \node (I22) at (1,-0.3) {$I_2$};

    \node[C] (K1) at (-2.75,0) {};
    \node[C] (K2) at (2.75,0.5) {};
    \node[C] (K3) at (2.75,-0.5) {};

    \draw[thick, shorten <=-0.1pt, shorten >=-0.1pt]
    (K1)--(p2)
    (I1)--(p1)
    (p1)--(I2)
    (I2) to[out=30,in=150] (p3)
    (I2) to[out=-30,in=-150] (p3)
    (p2) to[out=30,in=150] (I1)
    (p2) to[out=-30,in=-150] (I1)
    (p3)--(K2)
    (p3)--(K3);
\end{tikzpicture}
    \caption{Construction of $G\in\mathcal F(3)$.}
    \label{graph1}
\end{figure}
\end{example}

The following proposition provides the factorization property of the Feynman rules.

\begin{proposition}\label{multiplicativity}
Let $m_1,m_2\in\mathbb N_0$, and let $G_1\in \mathcal F(m_1)$ and $G_2\in \mathcal F(m_2)$ be two generalized amputated Feynman graphs. Let 
\begin{equation}
G=G_1\sqcup G_2,
\end{equation}
where $\sqcup$ denotes the disjoint union{ , with the full-vertex labels of $G_2$ shifted by $m_1$}. Then $G\in \mathcal F(m_1+m_2)$ and
\begin{equation}
V[G](t,\phi)=V[G_1](t,\phi)\,V[G_2](t,\phi).
\end{equation}
\end{proposition}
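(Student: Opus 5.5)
The plan is to check first that $G=G_1\sqcup G_2$ belongs to $\mathcal F(m_1+m_2)$ directly from Definition \ref{Feyndef}, and then to show that the Feynman rules of Definition \ref{feynman} factor over the two components. For membership, the vertex set of $G$ is the disjoint union of those of $G_1$ and $G_2$. After the shift, the inner full vertices of $G_2$ carry labels $m_1+1,\dots,m_1+m_2$, so $G$ has exactly $m_1+m_2$ distinguishable inner full vertices. Their leg numbers $p_j$ still satisfy $0\le p_j\le\bar p$ and $\lambda^{(p_j)}\neq0$. No edge of $G$ joins $G_1$ to $G_2$, so every local condition is inherited from $G_1$ or from $G_2$: each outer empty vertex meets one edge, each inner empty vertex meets at least one edge, and each edge joins a full vertex to an empty vertex.

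Next I would describe $G$ through the correspondence with pairs $(K,I)$. The leg-label set splits as
\[
\Omega(p_1,\dots,p_{m_1+m_2})=\Omega_1\sqcup\Omega_2 ,
\]
where $\Omega_1=\Omega(p_1,\dots,p_{m_1})$ and $\Omega_2$ is the shifted copy of $\Omega(p_{m_1+1},\dots,p_{m_1+m_2})$. Under this splitting $K=K_1\sqcup K_2$ and $I=I^{(1)}\sqcup I^{(2)}$. Every block of $I$ lies entirely in $\Omega_1$ or entirely in $\Omega_2$, because an inner empty vertex belongs to a single component.

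For the value, write $V[G](t,\phi)$ as a sum over index assignments $(i_a)_{a\in\Omega}\in\{1,\dots,d\}^{\Omega}$ of a product of local factors. Each factor is one of
\[
\phi_{i_b}\ (b\in K),\qquad t\,|J|!\,C_J\ (J\in I),\qquad \lambda^{(p_j)}_{i^j_1\cdots i^j_{p_j}} .
\]
Each factor depends only on indices attached to legs in $\Omega_1$ or only on legs in $\Omega_2$. This is the one point to verify carefully, for the inner empty vertex factors $C_J$, using that $J\subseteq\Omega_1$ or $J\subseteq\Omega_2$. It follows that the summand is a product $F_1\big((i_a)_{a\in\Omega_1}\big)\,F_2\big((i_a)_{a\in\Omega_2}\big)$.

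Since $\{1,\dots,d\}^{\Omega}=\{1,\dots,d\}^{\Omega_1}\times\{1,\dots,d\}^{\Omega_2}$ and all sums are finite, the sum factorizes by distributivity, $\sum F_1F_2=\big(\sum F_1\big)\big(\sum F_2\big)$. The shift of labels merely renames summation variables, so the second factor is exactly $V[G_2](t,\phi)$ and the first is $V[G_1](t,\phi)$.

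Finally, the degenerate cases $m_1=0$ or $m_2=0$ follow from the convention $V[\varnothing]=1$ together with $\varnothing\sqcup G_2=G_2$. I do not expect any real obstacle here. The only care needed is bookkeeping: the relabeling must be a bijection of leg labels that preserves the local factors, and empty vertices, being non-distinguishable, introduce no extra combinatorial multiplicity.
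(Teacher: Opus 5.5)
Your proposal is correct and follows essentially the same route as the paper. Membership in $\mathcal F(m_1+m_2)$ is inherited componentwise from Definition \ref{Feyndef}, and the value factorizes because every local factor and index assignment in Definition \ref{feynman} splits between $G_1$ and $G_2$; you make explicit the bookkeeping the paper leaves implicit (the splitting $\Omega=\Omega_1\sqcup\Omega_2$, each block of $I$ lying on one side, distributivity over $\{1,\dots,d\}^{\Omega_1}\times\{1,\dots,d\}^{\Omega_2}$, and the degenerate cases).
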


\begin{proof}
Since $G_1\in\mathcal F(m_1)$ and $G_2\in\mathcal F(m_2)$, the graphs $G_1$ and $G_2$ have respectively $m_1$ and $m_2$ inner full vertices. Their disjoint union $G=G_1\sqcup G_2$ has $m_1+m_2$ inner full vertices and still satisfies the defining properties of a generalized amputated Feynman graph given in Definition \ref{Feyndef}, hence $G\in\mathcal F(m_1+m_2)$. By Definition \ref{feynman}, the value $V[G](t,\phi)$ is obtained by multiplying the factors assigned to the vertices of $G$ and summing over all index assignments. 

Since $G=G_1\sqcup G_2$, both the factors and the index assignments split into indices associated with $G_1$ and $G_2$ respectively. Therefore the evaluation defining $V[G](t,\phi)$ factorizes into the product of the evaluations defining $V[G_1](t,\phi)$ and $V[G_2](t,\phi)$. 

Thus
\begin{equation}
V[G](t,\phi)=V[G_1](t,\phi)\,V[G_2](t,\phi).
\end{equation}
\end{proof}

\begin{theorem}\label{graphexpansion}
    The solution $\Phi_t^\beta$ of the Cauchy problem \eqref{problem} has the following graphical expansion { as $\beta\to0$}
\begin{equation}
    \Phi_t^\beta(\phi){ \mathrel{\sim}}\sum_{m=0}^\infty \beta^m\frac{f^{(m)}(0)}{m!}\sum_{G\in \mathcal F(m)} V[G](t,\phi)
\end{equation}
\end{theorem}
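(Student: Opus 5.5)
The plan is to start from the expansion in Theorem \ref{expansion2} and show that, for each fixed $m$, the inner sum over $(p_1,\dots,p_m)$, $K$, $I$ and the indices equals $\sum_{G\in\mathcal F(m)}V[G](t,\phi)$. Since the asymptotic statement (pointwise in $\phi$ and in $\mathcal S'(\R^d)$) concerns only the coefficients of $\beta^m$, this term-by-term identity of coefficients transfers the asymptotics of Theorem \ref{expansion2}, hence of Theorem \ref{expansion}, directly to the graphical series. The case $m=0$ is handled separately by the convention $\mathcal F(0)=\{\varnothing\}$ and $V[\varnothing]=1$. It matches the zeroth term $f(0)\langle V_\phi^0\rangle_{\nu_t}=f(0)$, because $\nu_t$ is a probability measure.

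For $m\ge1$, I fix the degrees $(p_1,\dots,p_m)$ and use the bijection between pairs $(K,I)$, with $K\subseteq\Omega(p_1,\dots,p_m)$ and $I\in\mathcal P(\Omega\setminus K)$, and graphs in $\mathcal F(m)$ whose full vertices have $p_1,\dots,p_m$ legs. This is the construction preceding Definition \ref{feynman}, cf. \cite{main}, Lemma 2.9. I will check that it is well defined in both directions and that the two maps are mutually inverse. Legs of full vertices are labelled by $\Omega$. An outer empty vertex is met by exactly one edge, so outer vertices correspond to elements of $K$. Inner empty vertices are undistinguishable, so they correspond to unordered nonempty blocks, which is exactly a set partition $I$. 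Parallel edges are distinguished by leg labels, so no multiplicity is lost. The union over all admissible $(p_1,\dots,p_m)$ then exhausts $\mathcal F(m)$.

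Under this bijection, I compare the summand of Theorem \ref{expansion2} with the Feynman rules of Definition \ref{feynman}. The factor $\prod_{(s,q)\in K}\phi_{i^q_s}$ matches rule 2. The factor $\prod_l t\,|I_l|!\,C_{I_l}$ matches rule 3. The $\lambda^{(p_j)}$ factors and the index sums match rule 4. Hence each summand equals $V[G](t,\phi)$.

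The delicate point is the restriction $\lambda^{(p_j)}\neq0$ in Definition \ref{Feyndef}. Terms of Theorem \ref{expansion2} with $\lambda^{(p_j)}=0$ vanish, so dropping them is harmless. A second subtlety is the $p_j=0$ case, a full vertex with no legs contributing $\lambda^{(0)}$. This is consistent on both sides, since $\Omega$ simply receives no elements from that vertex. I would also double check that the sign conventions, $+\phi$ for outer vertices and the cancellation $(-1)^{|I_l|}(-1)^{|I_l|}$ already done in the proof of Theorem \ref{expansion2}, agree with rules 2 and 3. Summing over $G\in\mathcal F(m)$, multiplying by $\beta^m f^{(m)}(0)/m!$ and invoking Theorem \ref{expansion2} concludes the proof. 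I expect the main obstacle to be verifying rigorously that the graph/partition correspondence is a genuine bijection, with no overcounting from the indistinguishability of empty vertices. I would settle this by identifying a graph with the partition of its full-vertex legs induced by their empty endpoints.
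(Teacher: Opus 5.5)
Your proposal is correct and follows the paper's route. The paper likewise derives the result directly from Theorem~\ref{expansion2}: it identifies each pair $(K,I)$ with a graph in $\mathcal F(m)$ through the construction preceding Definition~\ref{feynman} (cf.\ \cite{main}, Lemma 2.9), and reads each summand off as $V[G](t,\phi)$. Your extra checks (the $m=0$ term, legless full vertices, vanishing $\lambda^{(p_j)}$, the sign cancellation) spell out what the paper calls a ``straightforward calculation.''
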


\begin{proof} The proof is a straightforward calculation using Theorem \ref{expansion2} and Definition \ref{feynman}.

\end{proof}

\begin{example}
We consider the exponential initial condition ${ f(\beta V)=e^{-\beta V}}$, where
$V(\phi)=\phi^4$, { for $\beta>0$. We use the Schwartz function in \eqref{eq:smoothed-f} with $P\equiv1$; it agrees with $e^{-x}$ for $x\geq0$ and near zero.} In particular, $V$ is an
even quartic potential, also called a $\phi^4$-type potential, which has applications in describing stimulated side scattering processes, see e.g \cite{wkb}. Assume moreover that the pseudo-differential operator
has vanishing odd cumulants, namely
\begin{equation}
C^{(2k+1)}=0,\qquad k\ge0.
\end{equation}

Then the first nontrivial { first-order} expansion { in $\beta$} of $\Phi_t{ ^\beta}$ is given by

\begin{equation}
\begin{aligned}
    \Phi_t^{{ \beta}}(\phi)&=1-{ \beta}\left(\phi^4+{ 12}tC^{(2)}\phi^2+{ 24}tC^{(4)}+{ 12}t^2 (C^{(2)})^2\right)+{ O(\beta^2)}\\
    &={1-\beta\Bigg(}
{\begin{tikzpicture}[baseline=-0.4ex,scale=0.65,every node/.append style={text=black}]
\node[node] (v) at (0,0) {$\bullet$};
\node[C] (a) at (-.55,.45) {}; \node[C] (b) at (-.55,-.45) {};
\node[C] (c) at (.55,.45) {}; \node[C] (d) at (.55,-.45) {};
\draw[thick] (v)--(a) (v)--(b) (v)--(c) (v)--(d);
\end{tikzpicture}}
{+6\,}
{\begin{tikzpicture}[baseline=-0.4ex,scale=0.65,every node/.append style={text=black}]
\node[node] (v) at (0,0) {$\bullet$};
\node[C] (a) at (-.6,.45) {}; \node[C] (b) at (-.6,-.45) {};
\node[O] (c) at (.75,0) {};
\draw[thick] (v)--(a) (v)--(b) (v) to[out=40,in=140] (c) (v) to[out=-40,in=-140] (c);
\end{tikzpicture}}
{+}
{\begin{tikzpicture}[baseline=-0.4ex,scale=0.65,every node/.append style={text=black}]
\node[node] (v) at (0,0) {$\bullet$}; \node[O] (c) at (1,0) {};
\draw[thick] (v) to[out=65,in=115] (c) (v) to[out=22,in=158] (c)
(v) to[out=-22,in=-158] (c) (v) to[out=-65,in=-115] (c);
\end{tikzpicture}}
{+3\,}
{\begin{tikzpicture}[baseline=-0.4ex,scale=0.65,every node/.append style={text=black}]
\node[node] (v) at (0,0) {$\bullet$};
\node[O] (a) at (-.75,0) {}; \node[O] (b) at (.75,0) {};
\draw[thick] (v) to[out=140,in=40] (a) (v) to[out=-140,in=-40] (a)
(v) to[out=40,in=140] (b) (v) to[out=-40,in=-140] (b);
\end{tikzpicture}}
{\Bigg)+O(\beta^2),\qquad\beta\to0.}
\end{aligned}
\end{equation}

\end{example}
\section{Logarithm of the Solution} 

The study of the { formal} logarithm of the solution $\Phi_t^\beta$ { at $\beta=0$}, leads naturally to a graph expansion involving { contributions connected after grouping full vertices according to a partition}, reflecting the fact that the logarithm extracts the cumulant structure, or equivalently the truncated-moment structure, of the perturbative series, see \cite{beisert,Salmhofer}. More explicitly, the passage from $\Phi_t^\beta$ to its logarithm is equivalent to passing from a general graph expansion to its { counterpart connected through these blocks}. To formulate this precisely, we first introduce the appropriate notion of connectedness for the class of graphs under consideration, and then derive the corresponding expansion for $\log \Phi_t^\beta(\phi)$.

Let $n\in\N$ and denote $J_1=\cdots=J_n:=V_\phi$.
For $B\subset[n]=\{1,\dots,n\}$, we define
\begin{equation}\label{JB}
J_B:=\prod_{i\in B}J_i.
\end{equation}

Clearly, $J_B=V_\phi^{|B|}$. The notation $J_B$ is employed to retain
the labels of the factors belonging to the block $B$. This
allows partitions of $[n]$ to index the block-moment and graph
decompositions.

The following lemma provides the combinatorial step needed to rewrite the multiindex sum in partition notation.

\begin{lemma}\label{lem:multi-to-part}
Let $n\ge 1$ and $1\le k\le n$. Fix integers
\begin{equation}
r_1,\dots,r_k\ge 1,
\qquad
r_1+\cdots+r_k=n.
\end{equation}

Then there is a one-to-one correspondence between:
\begin{enumerate}
\item A decomposition of the $n$ labeled copies of $V_\phi$ into $k$ ordered groups of respective sizes $r_1,\dots,r_k$.

\item An ordered set partition $(B_1,\dots,B_k)$ of $[n]$, that is, a family of nonempty pairwise disjoint subsets $B_1,\dots,B_k\subset [n]$ such that
\begin{equation}
\bigcup_{a=1}^k B_a=[n],
\qquad
|B_a|=r_a
\quad\text{for all }a=1,\dots,k.
\end{equation}
\end{enumerate}

This is equivalent to a representation by pair $(\pi,l)$, where $\pi\in\mathcal P([n])$ and $l:\pi\to\{1,\dots,k\}$
is a bijection labeling the blocks of $\pi$, with the property that for each
$a\in\{1,\dots,k\}$, the unique block $B\in\pi$ satisfying $l(B)=a$ has
cardinality $|B|=r_a$.
\end{lemma}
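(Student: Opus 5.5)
The plan is to make the correspondence completely explicit. I will identify the $n$ labeled copies $J_1,\dots,J_n$ of $V_\phi$ with their labels in $[n]$, and then build maps in both directions. A decomposition into $k$ ordered groups of sizes $r_1,\dots,r_k$ is, by definition, an assignment of each label $i\in[n]$ to exactly one group index $g(i)\in\{1,\dots,k\}$ such that the $a$-th group has exactly $r_a$ elements. Equivalently, it is a map $g:[n]\to\{1,\dots,k\}$ with $|g^{-1}(a)|=r_a$ for every $a$.

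From such a $g$ I will set $B_a:=g^{-1}(a)$. These fibres are pairwise disjoint and cover $[n]$. They are nonempty because $r_a\ge1$, and they satisfy $|B_a|=r_a$, so $(B_1,\dots,B_k)$ is an ordered set partition of the required type. Conversely, given such an ordered partition, I will define $g(i):=a$ for $i\in B_a$. This is well defined because the $B_a$ are disjoint and cover $[n]$, and it has the prescribed fibre sizes. The two constructions are clearly mutually inverse, since $g^{-1}(a)=B_a$ in both directions. This proves the first equivalence. In terms of the notation \eqref{JB}, the $a$-th group is then just the product $J_{B_a}=V_\phi^{r_a}$.

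For the second equivalence, I will send $(B_1,\dots,B_k)$ to the pair $(\pi,l)$ with $\pi:=\{B_1,\dots,B_k\}$ and $l(B_a):=a$. Since the blocks are nonempty and disjoint, they are pairwise distinct, so $\pi\in\mathcal P([n])$ has exactly $k$ blocks and $l$ is a bijection $\pi\to\{1,\dots,k\}$. It also has the required property $|l^{-1}(a)|=r_a$. Conversely, from $(\pi,l)$ I will set $B_a:=l^{-1}(a)$, which recovers an ordered set partition with $|B_a|=r_a$. Checking that the compositions are identities is immediate in both directions.

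The only delicate point, and the one I would state carefully, is the last one. Distinct ordered partitions can give the same unordered $\pi$; this happens when some $r_a$ coincide. The labeling $l$ is exactly what records the ordering, so no information is lost. As a sanity check I will note the resulting count: the number of objects of each kind is the multinomial $n!/(r_1!\cdots r_k!)$. This is the count that the later rewriting of the multi-index sum in partition notation will rely on.
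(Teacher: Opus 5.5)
Your argument is correct, and it is more self-contained than the paper's. The paper gives no construction at all; it simply appeals to the ``multinomial correspondence theorem'' (Theorem A in Comtet).

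You instead make item 1 precise as a map $g:[n]\to\{1,\dots,k\}$ with $|g^{-1}(a)|=r_a$. This is the only sensible reading, since all $J_i$ equal $V_\phi$ and only the labels can be distributed. You take fibres to get $(B_1,\dots,B_k)$, and you pass to $(\pi,l)$ by forgetting the order while recording it in $l$. Your remark that nonempty disjoint blocks are pairwise distinct is exactly what guarantees that $\pi$ has $k$ blocks and that $l$ is well defined.

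Your version gains two things over the citation. First, it makes clear where the labeling matters, namely only when some $r_a$ coincide. Second, it gives the explicit count $n!/(r_1!\cdots r_k!)$. Each $\pi$ with $k$ blocks carries exactly $k!$ labelings $l$, and this combined with your count is precisely what converts $\frac{1}{k!}\sum_{r_1+\cdots+r_k=n}\prod_a\frac{\kappa_{r_a}}{r_a!}(\cdots)$ into $\frac{1}{n!}\sum_{|\pi|=k}\prod_{B\in\pi}\kappa_{|B|}(\cdots)$ in the proof of Theorem~\ref{logexpgraphthm}.

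The citation is shorter, but it leaves both the meaning of item 1 and this bookkeeping to the reader.
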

\begin{proof}

The proof is based on the multinomial correspondence theorem. We refer the reader to (\cite{Comtet}, Theorem A).
\end{proof}

\begin{theorem}\label{logarithm}
Let $\Phi_t^{\beta}$, $\beta>0$ be the solution of the Cauchy problem \eqref{problem}. Assume that $f(0)=1$. Then, as a formal power series in $\beta$,
\begin{equation}
\log\Phi_t^\beta(\phi)=\sum_{n\ge1}\gamma_n\,\beta^n,
\end{equation}
with
\begin{equation}
\gamma_n
=
\sum_{k=1}^{n}\frac{1}{k!}
\sum_{\substack{r_1,\dots,r_k\ge1\\ r_1+\cdots+r_k=n}}
\Big(\prod_{a=1}^k\frac{\kappa_{r_a}}{r_a!}\Big)\,
\big\langle V_\phi^{\,r_1}{ ,\ldots,} V_\phi^{\,r_k}\big\rangle_{\nu_t}^T.
\end{equation}
\end{theorem}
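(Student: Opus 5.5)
The plan is to reduce the statement to the classical identity ``logarithm of a moment generating function equals the generating function of joint cumulants,'' applied at the level of formal power series in $\beta$. Throughout, the constants $\kappa_r$ are read as the Taylor coefficients at $0$ of $\log f$. This is meaningful as a formal series because $f(0)=1$:
\begin{equation*}
g(x):=\log f(x)=\sum_{r\ge1}\frac{\kappa_r}{r!}x^r \quad\text{(formally)},\qquad \kappa_r=g^{(r)}(0).
\end{equation*}
By the one-dimensional case of Theorem \ref{faadibruno}, $f^{(m)}(0)$ is the complete Bell polynomial $B_m(\kappa_1,\dots,\kappa_m)$. Equivalently, $\sum_m f^{(m)}(0)x^m/m!=\exp(\sum_r \kappa_r x^r/r!)$ as formal series.

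\textbf{Step 1 (rewriting the moment series).} Start from Theorem \ref{expansion}, $\Phi_t^\beta(\phi)\sim\sum_m \beta^m \frac{f^{(m)}(0)}{m!}\langle V_\phi^m\rangle_{\nu_t}$. Insert the Bell-polynomial form of $f^{(m)}(0)$ and use linearity of $\langle\cdot\rangle_{\nu_t}$ on polynomials in $\varphi$, which are integrable by Definition \ref{def:nu-pairing}. This gives the formal identity
\begin{equation*}
\Phi_t^\beta(\phi)=\Big\langle \exp\Big(\sum_{r\ge1}\frac{\kappa_r}{r!}\beta^r V_\phi^r\Big)\Big\rangle_{\nu_t}=\Big\langle e^{Y_\beta}\Big\rangle_{\nu_t},\qquad Y_\beta:=\sum_{r\ge1}\frac{\kappa_r\beta^r}{r!}V_\phi^r.
\end{equation*}
At each order $\beta^n$ only finitely many terms contribute, and each is a finite moment of a polynomial in $\varphi$. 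The identity therefore holds coefficientwise in $\mathbb{R}[[\beta]]$.

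\textbf{Step 2 (cumulant expansion).} Define the joint truncated moments $\langle X_1,\dots,X_k\rangle^T_{\nu_t}$ of polynomial variables by the Möbius formula \eqref{reverse-linked}, with the $X_j$ in place of the coordinates. Theorem \ref{cluster}, whose combinatorial content is purely the Möbius inversion on $\mathcal P([k])$ and hence holds for arbitrary integrable variables, gives the moment--cumulant relation. The standard exponential formula then yields, in $\mathbb{R}[[\beta]]$,
\begin{equation*}
\log\langle e^{Y_\beta}\rangle_{\nu_t}=\sum_{k\ge1}\frac{1}{k!}\langle \underbrace{Y_\beta,\dots,Y_\beta}_{k}\rangle^T_{\nu_t}.
\end{equation*}
I would prove this by expanding $\langle Y^m\rangle$ via \eqref{linked} into partitions of $[m]$. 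I would then group partitions by block sizes and recognize $\exp$ of the connected sum, using the labelled-block counting of Lemma \ref{lem:multi-to-part}.

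\textbf{Step 3 (extracting $\gamma_n$).} By multilinearity and symmetry of joint cumulants, expand each $Y_\beta$ in $\langle Y_\beta,\dots,Y_\beta\rangle^T$. This gives a sum over ordered tuples $(r_1,\dots,r_k)$ of $\prod_a\kappa_{r_a}\beta^{r_a}/r_a!$ times $\langle V_\phi^{r_1},\dots,V_\phi^{r_k}\rangle^T_{\nu_t}$. Collecting the coefficient of $\beta^n$, i.e.\ imposing $r_1+\cdots+r_k=n$ with $k\le n$ since $r_a\ge1$, gives exactly $\gamma_n$. The constant term vanishes because $f(0)=1$.

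\textbf{Main obstacle.} The delicate point is Step 2: justifying the exponential formula rigorously as an identity of formal series. The difficulty is that $Y_\beta$ is itself a series and the cumulants are defined only via \eqref{reverse-linked}. I would handle it by truncating $Y_\beta$ at order $n$ in $\beta$, which changes no coefficient up to $\beta^n$, and then proving the finite combinatorial identity order by order. The identity follows from \eqref{linked} together with the Lemma \ref{lem:multi-to-part} bijection between ordered block decompositions and labelled partitions. That bijection accounts for the $1/k!$ and the $1/r_a!$ factors.
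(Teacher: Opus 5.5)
Your proposal is correct and follows essentially the same route as the paper. The paper writes $f=\exp(K)$ with $K=\log f$ formally, so that $\Phi_t^\beta(\phi)=\langle e^{K(\beta V_\phi)}\rangle_{\nu_t}$, and applies the moment--cumulant exponential formula of Lemma~\ref{exp-linked} coefficient by coefficient. It then expands the cumulants of $K(\beta V_\phi)$ by multilinearity and collects the coefficient of $\beta^n$. Your Bell-polynomial justification in Step 1 and the truncation of $Y_\beta$ at order $n$ in Step 2 only make explicit the formal-series bookkeeping that the paper compresses into ``applying that argument coefficient by coefficient''.
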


First, we give a sense to the distributional moments of formal power series.
\begin{definition}\label{formalmoment}Let $\mathcal P[\phi]$ be the space of polynomials in $\phi$ for which the
distributional moments with respect to $\nu_t$ are defined (as in equation \eqref{polymoment}). For a formal series, 
\begin{equation}
F(\beta,\phi)=\sum_{n\ge0}\beta^n P_n(\phi),
\qquad P_n\in\mathcal P[\phi],
\end{equation}
we define its distributional moment with respect to $\nu_t$ as follows
\begin{equation}\label{eq:coefficientwise-pairing}
\big\langle F(\beta,\phi)\big\rangle_{\nu_t}
:=
\sum_{n\ge0}\beta^n
\big\langle P_n(\phi)\big\rangle_{\nu_t}.
\end{equation}

The distributional moment of $F(\beta,\phi)$ with respect to $\nu_t$ is understood therefore as a map from the ring of formal power series with coefficients in $\mathcal P[\phi]$ to the ring of formal power series with complex coefficients.
\end{definition}

Now we have the following lemma.
\begin{lemma}\label{exp-linked}
Let $P$ be a polynomial in $\phi\in\R^d$. Assume that the distributional moments
$\langle P(\phi)^n\rangle_{\nu_t}$ are defined for every $n\ge0$ as in equation \eqref{polymoment}. Then, as a formal power series in $s$,
\begin{equation}\label{explinked}
\big\langle e^{sP(\phi)}\big\rangle_{\nu_t}
:=
\sum_{n\ge0}\frac{s^n}{n!}\big\langle P(\phi)^n\big\rangle_{\nu_t}
=
\exp\Big(
\sum_{n\ge1}\frac{s^n}{n!}\big\langle P(\phi)^n\big\rangle_{\nu_t}^T
\Big).
\end{equation}

Equivalently,
\begin{equation}\label{loglinked}
\log\big\langle e^{sP(\phi)}\big\rangle_{\nu_t}
=
\sum_{n\ge1}\frac{s^n}{n!}\big\langle P(\phi)^n\big\rangle_{\nu_t}^T.
\end{equation}
\end{lemma}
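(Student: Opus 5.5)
The plan is to reduce the statement to two purely combinatorial facts: the moment--cumulant relation of Theorem \ref{cluster}, applied with polynomial entries, and the exponential formula for set partitions. Throughout, $\langle P(\phi)^n\rangle_{\nu_t}^T$ is read as the joint truncated moment $\langle P,\dots,P\rangle_{\nu_t}^T$ with $n$ copies of $P$.

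\emph{Step 1: moment--cumulant relation with polynomial entries.} For polynomials $Q_1,\dots,Q_n$ I will define the joint truncated moment $\langle Q_1,\dots,Q_n\rangle_{\nu_t}^T$ by the right-hand side of \eqref{reverse-linked}, with $\phi_{i_r}$ replaced by $Q_r$. Every moment $\langle\prod_{r\in B}Q_r\rangle_{\nu_t}$ is a finite ordinary moment, by Definition \ref{def:nu-pairing} and \eqref{polymoment}. The inversion between \eqref{linked} and \eqref{reverse-linked} is Möbius inversion on the lattice $\mathcal P([n])$. It uses only the zeta--Möbius relation $\sum_{\sigma\le\rho\le\pi}\mu(\sigma,\rho)=\delta_{\sigma\pi}$ and no property of the entries. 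Hence
\begin{equation}
\Big\langle\prod_{r=1}^nQ_r\Big\rangle_{\nu_t}=\sum_{\pi\in\mathcal P([n])}\prod_{B\in\pi}\big\langle Q_r:r\in B\big\rangle_{\nu_t}^T
\end{equation}
holds for arbitrary polynomial entries. Both sides are multilinear in the $Q_r$, and on monomials this definition agrees with Definition \ref{distruncdef} via Theorem \ref{cluster}. Taking $Q_1=\cdots=Q_n=P$, and using the labelled notation $J_B$ of \eqref{JB} to keep track of copies, gives
\begin{equation}
\langle P^n\rangle_{\nu_t}=\sum_{\pi\in\mathcal P([n])}\prod_{B\in\pi}\kappa_{|B|},\qquad \kappa_j:=\langle P^j\rangle_{\nu_t}^T.
\end{equation}
Each factor depends only on $|B|$, because all entries coincide.

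\emph{Step 2: exponential formula.} I will group the partitions of $[n]$ by type $(k_j)_{j\ge1}$, where $k_j$ is the number of blocks of size $j$ and $\sum_j jk_j=n$. The number of partitions of a given type is $n!/\prod_j (j!)^{k_j}k_j!$; this is the counting behind Lemma \ref{lem:multi-to-part}, where ordered partitions are divided by $k!$, equivalently by $\prod_j k_j!$ for unordered blocks. This gives
\begin{equation}
\sum_{n\ge0}\frac{s^n}{n!}\langle P^n\rangle_{\nu_t}=\sum_{(k_j)}\prod_{j\ge1}\frac{1}{k_j!}\Big(\frac{s^j\kappa_j}{j!}\Big)^{k_j}=\prod_{j\ge1}\exp\Big(\frac{s^j\kappa_j}{j!}\Big),
\end{equation}
which is \eqref{explinked}. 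The $n=0$ term equals $1$ on both sides, since $\langle 1\rangle_{\nu_t}=1$ for a probability measure and the empty partition contributes the empty product. All manipulations are identities in $\C[[s]]$. Each coefficient of $s^n$ involves only finitely many terms, so rearranging the sum is legitimate.

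\emph{Step 3: logarithm.} The series in \eqref{explinked} has constant term $1$, so its formal logarithm is well defined in $\C[[s]]$. Because $\log$ and $\exp$ are mutually inverse on series with constant terms $1$ and $0$ respectively, \eqref{loglinked} follows from \eqref{explinked}.

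The main obstacle is Step 1, namely justifying that Theorem \ref{cluster}, which is stated for coordinate monomials, transfers to polynomial entries. I would handle this by taking \eqref{reverse-linked} as the definition for general entries and proving the forward formula directly by Möbius inversion, rather than trying to expand $P$ into monomials. The monomial expansion route also works, via multilinearity, but it is messier.
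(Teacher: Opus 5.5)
Your proposal is correct and follows the same route as the paper: the exponential formula over set partitions, combined with the moment--cumulant relation of Theorem \ref{cluster} for $n$ copies of $P$, and then a formal logarithm in $\C[[s]]$. Your Step 1 usefully makes explicit a point the paper uses without comment: Theorem \ref{cluster} is stated only for coordinate entries, and \eqref{linked} transfers to polynomial entries once $\langle P,\dots,P\rangle_{\nu_t}^T$ is defined by \eqref{reverse-linked}, because the inversion is pure Möbius inversion on $\mathcal P([n])$. One small fix: rename your $\kappa_j$, since it clashes with the paper's $\kappa_r=(\log f)^{(r)}(0)$.
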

\begin{proof}
We expand the right-hand side of \eqref{explinked} as a formal power series. At order $s^n/n!$, its exponential formula sums the products $\prod_{B\in\pi}\langle P^{|B|}\rangle_{\nu_t}^T$, where $\pi\in\mathcal P([n])$. Theorem \ref{cluster} identifies this sum with $\langle P^n\rangle_{\nu_t}$. Taking their formal logarithms yields equation \eqref{loglinked}.

\end{proof}

\subsection{Proof of Theorem \ref{logarithm}}    
Let
\begin{equation}\label{def-K}
K(z):=\log f(z)=\sum_{r\ge1}\frac{\kappa_r}{r!}\,z^r,
\qquad \kappa_r:=(\log f)^{(r)}(0),
\end{equation}
be the formal logarithm of { the Taylor series of} the initial condition $f$. Since $f(0)=1$, the formal power series $\log f$ is well defined, and therefore
\begin{equation}
f(z)=\exp(K(z)),
\end{equation}
is well defined as a formal power series.

Replacing $f$ by $\exp(K)$ in { the formal expansion associated with \eqref{phisol}}, we obtain
\begin{equation}
\Phi_t^\beta(\phi)=\big\langle f(\beta V_\phi)\big\rangle_{\nu_t}
=
\Big\langle \exp\!\big(K(\beta V_\phi)\big)\Big\rangle_{\nu_t},
\end{equation}

Denote $X:=K(\beta V_\phi).$ The constant term of $X$ in $\beta$ vanishes. Thus, in the argument of Lemma~\ref{exp-linked}, the coefficient of $\beta^n$ receives contributions only from $k\leq n$. Applying that argument coefficient by coefficient, with the joint cumulants extended by multilinearity, we get

\begin{equation}\label{logPhi-double}
\begin{aligned}
\log\Phi_t^\beta(\phi)
&=
\log\big\langle e^{X}\big\rangle_{\nu_t}=\sum_{k\ge1}\frac{1}{k!}\,\big\langle X^k\big\rangle_{\nu_t}^T\\
&=\sum_{k\ge1}\frac{1}{k!}
\sum_{r_1,\dots,r_k\ge1}
\Big(\prod_{a=1}^k\frac{\kappa_{r_a}}{r_a!}\Big)\,
\beta^{r_1+\cdots+r_k}\,
\big\langle V_\phi^{\,r_1},\cdots ,V_\phi^{\,r_k}\big\rangle_{\nu_t}^T.
\end{aligned}
\end{equation}

Finally, expressing the logarithm as 
\begin{equation}
\log \Phi_t^\beta(\phi)=\sum_{n\ge1}\gamma_n\,\beta^n,
\end{equation}
and identifying coefficients of $\beta^n$ in \eqref{logPhi-double}, we obtain
\begin{equation}\label{gamma_n}
\gamma_n
=
\sum_{k=1}^{n}\frac{1}{k!}
\sum_{\substack{r_1,\dots,r_k\ge1\\ r_1+\cdots+r_k=n}}
\Big(\prod_{a=1}^k\frac{\kappa_{r_a}}{r_a!}\Big)\,
\big\langle V_\phi^{\,r_1},\cdots, V_\phi^{\,r_k}\big\rangle_{\nu_t}^T.
\end{equation}\hfill $\qed$

\subsection{Graphical Representation of The Logarithm}

In Theorem \ref{logarithm} we proved { a formal} expansion of the logarithm of the solution $\Phi_t^\beta$. In this subsection we will establish an analogue of the graphical representation.

For block variables $J_B$, the notation
$\big\langle \prod_{B\in\pi} J_B \big\rangle_{\nu_t}^{T}$
denotes the truncated moment of the family $(J_B)_{B\in\pi}$, that is

\begin{equation}
\Big\langle \prod_{B\in\pi} J_B \Big\rangle_{\nu_t}^{T}
:=
\big\langle J_B : B\in\pi \big\rangle_{\nu_t}^{T},
\end{equation}
where $J_B$ are defined as in equation \eqref{JB}

We give a sense now to the connectedness of a graph. We start by the labelling of a graph. Let $p_i, p_j$ be two inner full vertices of the graph $G$. We define the relation "is connected" and we denote it by $\sim_G$ between two vertices as follows: $p_i \sim_G p_j$ if and only if there exists a path between these two vertices. Indeed this is an equivalence relation.

In terms of partitions, connectedness can be understood as follows:

\begin{definition}
    A graph $G\in \mathcal F(n)$ is said to be $\sigma$-connected (connected with label $\sigma\in \mathcal P([n])$), { with $\sigma(G)=\sigma$,} if its inner full vertices $(p_i)_{i\in[n]}$ are only connected block-wise, that is, for each block { $B_i\in \sigma$}, $p_k\sim_G p_l$ for $k,l\in B_i$. In other words, $(B_i)_{i\in \{1,\cdots,|\sigma|\}}$ are the equivalence classes of the relation $\sim_G$.
\end{definition} 

\begin{example} The following graph $G\in\F(2)$ is a $\hat{1}$-connected graph corresponding to the partition $\sigma=\{\{1,2\}\}$:
\begin{figure}[h!]
    \centering
\begin{tikzpicture}
    \node[node] (p1) at (0,0) {$\bullet$};
    \node[node] (p2) at (-2,0) {$\bullet$};
    \node[O] (I1) at (-1,0) {};
    \node[C] (I2) at (1,0) {};
    \node (I11) at (-1,-0.3) {$I_1$};

    \node[C] (K1) at (-2.75,0) {};

    \draw[thick, shorten <=-0.1pt, shorten >=-0.1pt]
    (K1)--(p2)
    (I1)--(p1)
    (p1)--(I2)
    (p2) to[out=30,in=150] (I1)
    (p2) to[out=-30,in=-150] (I1);
\end{tikzpicture}
    \caption{The graph $G$}
    \label{graph2}
\end{figure}
\end{example}

The following proposition links the truncated moments with the generalized amputated Feynman graphs in Definition \ref{Feyndef}. 

\begin{proposition}\label{truncated-to-graph}
For every nonempty subset $C\subset [n]$
\begin{equation}\label{trunctograph}
\Big\langle \prod_{i\in C} J_i \Big\rangle_{\nu_t}^{T}
=
\sum_{\substack{G\in \mathcal F(|C|)\\ \sigma(G)=\hat 1_C}} V[G](t,\phi),\qquad t\in\R_+\quad, \phi\in\R^d.
\end{equation}
In other words, the truncated moment over the block $C$ is given by the sum of the values of all graphs on $C$ whose connectedness partition is the one-block partition $\hat 1_C$. { Here the full vertices are relabeled by $C$; similarly, $\mathcal F(D)$ below denotes the same graph class with full vertices labeled by $D$.}
\end{proposition}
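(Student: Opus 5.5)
The strategy is to prove \eqref{trunctograph} by strong induction on $|C|$, combining the moment--cumulant relation of Theorem \ref{cluster} with the graphical expansion of moments. The graphical expansion of moments comes from the proof of Theorem \ref{expansion2}. Concretely, the argument in Theorem \ref{expansion2} (expanding each copy $J_i=V_\phi$, choosing $K$ and a partition $I$ of the remaining legs, and invoking the bijection $(K,I)\leftrightarrow G$ together with Definition \ref{feynman}) shows, for every nonempty $D\subset[n]$,
\begin{equation*}
\Big\langle \prod_{i\in D} J_i\Big\rangle_{\nu_t}=\sum_{G\in\mathcal F(D)} V[G](t,\phi).
\end{equation*}
I treat this identity as the starting point, with full vertices labeled by $D$.

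Next I decompose the right-hand side according to connectedness. Every $G\in\mathcal F(D)$ has a unique connectedness partition $\sigma(G)\in\mathcal P(D)$, namely the classes of $\sim_G$ on full vertices. Each inner empty or outer empty vertex is attached only to full vertices of one class, since all edges join a full vertex to an empty vertex. Hence $G$ splits uniquely as a disjoint union $\bigsqcup_{B\in\sigma(G)} G_B$ with $G_B\in\mathcal F(B)$ and $\sigma(G_B)=\hat 1_B$. Conversely, any such family of connected graphs glues to a graph in $\mathcal F(D)$ with $\sigma=\{B\}$. This bijection is the point needing care: the labels of legs and full vertices are carried along, and empty vertices are indistinguishable, so no extra symmetry factors arise. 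By Proposition \ref{multiplicativity}, this gives
\begin{equation*}
\Big\langle \prod_{i\in D} J_i\Big\rangle_{\nu_t}=\sum_{\sigma\in\mathcal P(D)}\prod_{B\in\sigma} W(B),\qquad W(B):=\sum_{\substack{G\in\mathcal F(B)\\ \sigma(G)=\hat 1_B}}V[G](t,\phi).
\end{equation*}

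On the other hand, Theorem \ref{cluster}, extended by multilinearity to the polynomial variables $J_i$, gives
\begin{equation*}
\Big\langle \prod_{i\in D} J_i\Big\rangle_{\nu_t}=\sum_{\sigma\in\mathcal P(D)}\prod_{B\in\sigma}\Big\langle \prod_{i\in B}J_i\Big\rangle^T_{\nu_t}.
\end{equation*}
The multilinear extension is legitimate because joint cumulants are multilinear, so the identity holds after expanding each $J_i$ into monomials in the $\varphi$-variables. The shift by $\phi$ is handled as a constant, whose joint cumulants with other variables vanish beyond order one. This vanishing is consistent with the fact that outer empty vertices produce the factor $\phi$ locally.

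Finally I compare the two expansions for $D=C$ and apply strong induction on $|C|$. For $|C|=1$ the only partition is $\hat 1_C$, so the two expansions coincide term by term. For larger $|C|$, every partition $\sigma\neq\hat 1_C$ has blocks of strictly smaller size. By the induction hypothesis the products over those blocks agree in the two expansions, and subtracting them leaves exactly $W(C)=\langle\prod_{i\in C}J_i\rangle^T_{\nu_t}$. Equivalently, one can apply Möbius inversion \eqref{reverse-linked} directly to both expansions.

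I expect the main obstacle to be the connected-component bijection together with the justification of the multilinear cumulant identity for the polynomial variables $J_i$. Once these are secured, the induction itself is routine.
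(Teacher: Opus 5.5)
Your proposal is correct and follows the same route as the paper's proof. You write the full moment $\langle\prod_{i\in C}J_i\rangle_{\nu_t}$ as a sum over all of $\mathcal F(C)$ via Theorem \ref{expansion2}, group graphs by $\sigma(G)$ using Proposition \ref{multiplicativity}, compare with the moment--cumulant sum from Theorem \ref{cluster}, and cancel the $\tau\neq\hat 1_C$ contributions by induction on $|C|$. Your use of strong induction and your explicit connected-component bijection just spell out what the paper leaves implicit; the moment--cumulant relation for the family $(J_i)$ follows directly by M\"obius inversion of \eqref{reverse-linked}, so the detour through multilinearity is unnecessary.
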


\begin{proof}{
We proceed by induction on $|C|$. Equation \eqref{trunctograph} is valid when $|C|=1$. Assume now it is valid for some $n=|C|$. For a nontrivial partition $\tau\in\mathcal P(C)$, we apply the induction hypothesis on each block and then Proposition \ref{multiplicativity}. The product of the block cumulants equals the sum of $V[G]$ over graphs with connectedness partition $\sigma(G)=\tau$. On the other hand, the moment expansion in Theorem \ref{expansion2} together with Definition \ref{feynman}, gives the full moment as a sum over all $G\in\mathcal F(C)$. Finally, Theorem \ref{cluster} gives the same moment as a sum over $\tau\in\mathcal P(C)$ of products of block cumulants. We conclude by cancelling contributions of $\tau\ne\hat1_C$, which proves the Proposition.
}\end{proof}

We have now the following graphical representation of the logarithm of the solution.

\begin{theorem}\label{logexpgraphthm} { Assume that $f(0)=1$.} The { formal} logarithm of the solution $\Phi_t^\beta$ admits the following graphical expansion { in $\beta$} 
\begin{equation}\label{logexpgrapheq}
\log\Phi_t^\beta(\phi)
=
\sum_{n\ge1}\beta^n
\sum_{\pi\in\mathcal P([n])}
w(\pi)\;
\sum_{\substack{G\in\mathcal F(n)\\ \sigma(G)\vee\pi=\hat 1}} V[G](t,\phi),
\end{equation}
with 
\begin{equation}
w(\pi):=
\frac{1}{n!}\,
\prod_{B\in\pi}\kappa_{|B|}. 
\end{equation}
\end{theorem}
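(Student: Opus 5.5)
We start from Theorem \ref{logarithm} and rewrite $\gamma_n$ as a sum over set partitions rather than over ordered compositions. By Lemma \ref{lem:multi-to-part}, the $n!/\prod_a r_a!$ ordered set partitions $(B_1,\dots,B_k)$ of $[n]$ with $|B_a|=r_a$ correspond to the decompositions of the $n$ labelled copies $J_1,\dots,J_n$ of $V_\phi$. Forgetting the order of the blocks removes the factor $1/k!$. The joint truncated moment $\langle V_\phi^{r_1},\dots,V_\phi^{r_k}\rangle^T_{\nu_t}$ is symmetric in its arguments and equals $\langle J_B:B\in\pi\rangle^T_{\nu_t}$ for any $\pi$ with those block sizes. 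Hence we obtain
\begin{equation*}
\gamma_n=\frac{1}{n!}\sum_{\pi\in\mathcal P([n])}\Big(\prod_{B\in\pi}\kappa_{|B|}\Big)\Big\langle \prod_{B\in\pi}J_B\Big\rangle^T_{\nu_t}
=\sum_{\pi\in\mathcal P([n])}w(\pi)\Big\langle \prod_{B\in\pi}J_B\Big\rangle^T_{\nu_t}.
\end{equation*}
Indeed, $\frac1{k!}\prod_a\frac1{r_a!}\cdot\frac{n!}{\prod_a r_a!}^{-1}$-type bookkeeping reduces exactly to the single factor $1/n!$.

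It then remains to prove the graphical identity
\begin{equation*}
\Big\langle \prod_{B\in\pi}J_B\Big\rangle^T_{\nu_t}=\sum_{\substack{G\in\mathcal F(n)\\ \sigma(G)\vee\pi=\hat1}}V[G](t,\phi).
\end{equation*}
This is the classical \emph{truncated moments of products} formula (Leonov--Shiryaev): a joint cumulant of block products equals the sum over $\tau\in\mathcal P([n])$ with $\tau\vee\pi=\hat1$ of $\prod_{C\in\tau}\langle\prod_{i\in C}J_i\rangle^T_{\nu_t}$.

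I would derive it from Theorem \ref{cluster} applied to the variables $J_B$, together with the moment--cumulant relation for the $J_i$. Expanding each moment $\langle\prod_{B\in\rho}J_B\rangle_{\nu_t}$, for $\rho$ a partition of the blocks of $\pi$, into cumulants of the individual $J_i$ gives a sum over $\tau$ with $\tau\le$ the partition induced by $\rho$. Möbius inversion on the interval $[\pi,\hat1]$ then isolates exactly those $\tau$ with $\tau\vee\pi=\hat1$. For this step I would cite \cite{peccati}, where the result is proved in this language.

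Next, each $\prod_{C\in\tau}\langle\prod_{i\in C}J_i\rangle^T_{\nu_t}$ is rewritten graphically. By Proposition \ref{truncated-to-graph} applied blockwise and Proposition \ref{multiplicativity}, it equals $\sum_{G\in\mathcal F(n),\,\sigma(G)=\tau}V[G](t,\phi)$. Here the graphs with $\sigma(G)=\tau$ are, via the $(K,I)$ correspondence, exactly the disjoint unions of $\hat1_C$-connected graphs on the blocks $C$ of $\tau$. Summing over $\tau$ with $\tau\vee\pi=\hat1$ yields the claim, since each $G$ has a unique connectedness partition $\sigma(G)$.

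The main obstacle is making the Leonov--Shiryaev step rigorous in the present formal setting. The $J_i$ are polynomials in the integration variable $\varphi$, and "joint cumulants" of polynomials are defined only through multilinear extension of Definition \ref{distruncdef} and Theorem \ref{cluster}. I would handle this by working with genuine random variables: under Assumption \ref{assumptions}(3), $\nu_t$ is a probability measure with all moments finite, so the $J_i(\varphi)=V(\phi-\varphi)$ are honest random variables with moments of all orders. The combinatorial identities therefore hold as identities of classical cumulants. A secondary check is that the bijection between graphs with $\sigma(G)=\tau$ and tuples of connected block graphs is exact, including the labelling of inner full vertices and legs, so that no symmetry factors appear. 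This follows because inner full vertices and their legs are distinguishable, while empty vertices are determined by their leg sets.
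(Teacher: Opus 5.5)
Your proposal is correct, and its first half is the same as the paper's: both regroup the compositions $(r_1,\dots,r_k)$ into set partitions of $[n]$. The paper does this via the multiplicities $(m_j)$, you via ordered set partitions. Your displayed ``bookkeeping'' expression is garbled, although your prose is right. The clean statement is $\prod_a 1/r_a!=\frac{1}{n!}\cdot\frac{n!}{\prod_a r_a!}$. So each ordered set partition carries weight $\frac{1}{k!\,n!}$, and the $k!$ orderings of an unordered $\pi$ cancel the $1/k!$.

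The second half differs from the paper only in the order of operations.
\begin{itemize}
\item The paper applies \eqref{reverse-linked} to the family $(J_B)_{B\in\pi}$. It expands each resulting product of moments directly as a sum over all graphs with $\sigma(G)\le\tau$, using Theorem \ref{expansion2} and Proposition \ref{multiplicativity}. It then uses $\sum_{\tau\ge\pi\vee\sigma(G)}\mu(\tau,\hat 1)=\delta_{\pi\vee\sigma(G),\hat 1}$ graph by graph, and so never needs Proposition \ref{truncated-to-graph}.
\item You run the identical M\"obius cancellation one level up, with $\prod_{C\in\tau}\langle J_i:i\in C\rangle^T_{\nu_t}$ in place of $\sum_{\sigma(G)=\tau}V[G](t,\phi)$. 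This gives the Leonov--Shiryaev formula. Only then do you translate to graphs, via Proposition \ref{truncated-to-graph}, multiplicativity, and the unique decomposition of a graph into its $\sigma(G)$-components.
\end{itemize}

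Your route separates the purely probabilistic identity from the graphical one. That separation is legitimate here, since $\nu_t$ is a genuine probability measure with all moments finite. It also makes clear that the condition $\sigma(G)\vee\pi=\hat 1$ is just the Leonov--Shiryaev condition $\tau\vee\pi=\hat 1$ transported along $\tau=\sigma(G)$. The price is an extra dependence on Proposition \ref{truncated-to-graph}, which is itself proved by the same cancellation. The paper's one-step argument needs only the full moment--graph expansion.
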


\begin{proof} By Theorem \ref{logarithm}, it suffices to prove that
\begin{equation}\label{eq:gamma_n-partition}
\gamma_n
=
\sum_{\pi\in\mathcal P([n])}
w(\pi)\;
\Big\langle \prod_{B\in\pi} J_B\Big\rangle_{\nu_t}^T.
\end{equation}

Starting from equation (\ref{gamma_n})
\begin{align}
\gamma_n
&=\sum_{k=1}^{n}\frac{1}{k!}
\sum_{\substack{r_1,\dots,r_k\ge1\\ r_1+\cdots+r_k=n}}
\Big(\prod_{a=1}^k\frac{\kappa_{r_a}}{r_a!}\Big)\,
\big\langle V_\phi^{\,r_1}{ ,\ldots,} V_\phi^{\,r_k}\big\rangle_{\nu_t}^T
\nonumber\\
&=\sum_{\substack{(m_j)_{j\ge1}\\ \sum_{j\ge1}j\,m_j=n}}
\Big(\frac{n!}{\prod_{j\ge1}(j!)^{m_j}\,m_j!}\Big)\,
\frac{1}{n!}\Big(\prod_{j\ge1}\kappa_j^{m_j}\Big)\,
\big\langle \underbrace{V_\phi^{1},\dots,V_\phi^{1}}_{m_1},
\underbrace{V_\phi^{2},\dots,V_\phi^{2}}_{m_2},\dots\big\rangle_{\nu_t}^T
\nonumber\\
&=\sum_{\pi\in\mathcal P([n])}\frac{1}{n!}\Big(\prod_{B\in\pi}\kappa_{|B|}\Big)\,
\Big\langle \prod_{B\in\pi}J_B\Big\rangle_{\nu_t}^T.
\end{align}

{ To pass from block cumulants to graphs, apply \eqref{reverse-linked} to the family $(J_B)_{B\in\pi}$. Partitions of this family correspond to partitions $\tau\geq\pi$ of $[n]$. By the moment expansion and Proposition \ref{multiplicativity}, a graph $G$ occurs precisely when $\sigma(G)\leq\tau$. Its total coefficient is therefore
$$
\sum_{\tau\geq\pi\vee\sigma(G)}\mu(\tau,\hat1)
=\begin{cases}1,&\pi\vee\sigma(G)=\hat1,\\0,&\pi\vee\sigma(G)\neq\hat1,\end{cases}
$$
by M\"obius inversion on the partition lattice. Consequently, we obtain}
\begin{equation}
\gamma_n
=
\sum_{\pi\in\mathcal P([n])}
w(\pi)\,
\sum_{\substack{G\in\mathcal F(n)\\ \sigma(G)\vee\pi=\hat 1}} V[G](t,\phi).
\end{equation}

Therefore
\begin{equation}
\log\Phi_t^\beta(\phi)
=
\sum_{n\ge1}\beta^n
\sum_{\pi\in\mathcal P([n])}
w(\pi)\,
\sum_{\substack{G\in\mathcal F(n)\\ \sigma(G)\vee\pi=\hat 1}} V[G](t,\phi).
\end{equation}

This concludes the proof.
\end{proof}

\begin{remark} The result given in Theorem \ref{logexpgraphthm} allows us to characterise which graphs $G\in \mathcal F(m)$ contribute to the graphical representation of the logarithm of the solution. It also highlights an additional characterization through the initial condition's logarithm and its nonzero derivatives. 
\end{remark}

 We notice that the representation provided in Theorem \ref{logexpgraphthm} depends on the Feynman amputated graphs given in Definition \ref{feynman}. For this, we introduce a new graphical representation layer added the same graphical representation given in Definition (\ref{feynman}) which encodes the partition $\pi$: 

 \begin{definition}\label{feynman2} Let $\pi\in \mathcal P({ [n]})$ and $G\in \mathcal F(n), n\in\N$, be fixed such that $\pi\vee \sigma{ (G)}=\hat{1}$, and let $V[G](t,\phi)$ be defined as in Definition \ref{feynman}. Then 

 \begin{itemize}
     \item For each block $B_j\in\pi,\, j\in\{1,\cdots,|\pi|\}$, connect the inner full vertices $p_i$ { with $i\in B_j$} with an inner triangular vertex, denoted $\Delta_j$
     \item For each inner triangular vertex $\Delta_j$, assign a coefficient $\kappa_{|B_j|}=(\log{f})^{(|B_j|)}(0)$.
     \item Multiply all coefficients of inner triangular vertices $\Delta$ with $V[G](t,\phi)$.
 \end{itemize}
     
 \end{definition}

\begin{remark} From Definition \ref{feynman2}, we can have some disconnected graphs (i.e $\sigma$-connected with $\sigma<\hat{1}$) that contribute to the logarithmic expansion, due to the fact that they become connected via the condition $\pi\vee\sigma=\hat{1}$. 
\end{remark}
\begin{example} Consider a graph $G\in \mathcal F(3)$ such that $\sigma(G)=\{\{1,2\},\{3\}\}$. For $\pi=\{\{1\},\{2,3\}\}$, we have
\begin{figure}[h]
    \centering
\begin{tikzpicture}
    \node[node] (p1) at (0,0) {$\bullet$};
    \node[node] (p2) at (-2,0) {$\bullet$};
    \node[node] (p3) at (2,0) {$\bullet$};
    \node[node] (D1) at (-2,-1) {$\Delta$};
    \node[node] (D2) at (1,-1) {$\Delta$};
    \node[O] (I1) at (-1,0) {};
    \node[O] (I2) at (1,0) {};

    \node[C] (K1) at (-2.75,0) {};
    \node[C] (K2) at (2.75,0.5) {};
    \node[C] (K3) at (2.75,-0.5) {};

    \draw[thick, shorten <=-0.1pt, shorten >=-0.1pt]
    (K1)--(p2)
    (I1)--(p1)
    (I2) to[out=30,in=150] (p3)
    (I2) to[out=-30,in=-150] (p3)
    (p2) to[out=30,in=150] (I1)
    (p2) to[out=-30,in=-150] (I1)
    (p3)--(K2)
    (p3)--(K3);
    \draw[thick, dashed, shorten <=-0.1pt, shorten >=-0.1pt]
    (p2)--(D1)
    (p1) to[out=-90,in=180] (D2)
    (p3) to[out=-90,in=0] (D2);
\end{tikzpicture}
    \caption{From $\sigma$-connected to $\hat{1}$-connected}
    \label{graph3}
\end{figure}

In this example, the new graph contributes to the logarithm expansion since the condition $\pi\vee\sigma=\hat{1}$ holds, that is graphically speaking, the graph becomes connected after connecting the inner full vertices corresponding to the $p_i$ via the partition $\pi$. 

To obtain the analytical value of each triangular inner vertex, observe that we have two triangular vertices, $\Delta_1$ corresponding to the block $B_1=\{1\}$ and $\Delta_2$ corresponding to $B_2=\{2,3\}$. Thus $|B_1|=1$ and $|B_2|=2$ and 
\begin{equation}
    \kappa_{|B_1|}=(\log f)'(0),\qquad \kappa_{|B_2|}=(\log f)''(0),\qquad \text{from which} \qquad \tilde{w}(\pi)=(\log f)''(0)(\log f)'(0).
\end{equation}

\end{example}

\section{Borel Summability}

In general, the expansion given in Theorem { \ref{expansion}}
{ does not converge as a power series.} However, it may still encode the
desired analytic function through Borel summation, once analyticity in
a suitable complex domain and a factorial bound on the corresponding
remainder have been established. For the exponential initial condition
$e^{-\beta V}$, $\beta>0$, this methodology was used in \cite{main},
based on a version of Watson's theorem due to Sokal \cite{sokal}.

For $R>0$, we define
\begin{equation}\label{eq:CR}
\mathcal C_R
:=
\left\{
\zeta\in\mathbb C:
\operatorname{Re}(\zeta^{-1})>R^{-1}
\right\}.
\end{equation}

Equivalently, $\mathcal C_R$ is the open disk of radius $R/2$
centered at $R/2$. We also set
\begin{equation}\label{eq:Cinfty}
\mathcal C_\infty
:=
\bigcup_{R>0}\mathcal C_R
=
\left\{
\zeta\in\mathbb C:
\operatorname{Re}\zeta>0
\right\}.
\end{equation}

We first recall the Watson-Sokal theorem.

\begin{theorem}\label{Watson-Sokal}
Let $F$ be analytic in $\mathcal C_R$. Assume that there exist
constants $A,\sigma>0$ and coefficients $(a_m)_{m\geq0}$ such that
\begin{equation}\label{eq:Sokal-remainder}
\left|
F(\zeta)-\sum_{m=0}^{N-1}a_m\zeta^m
\right|
\leq
A\sigma^N N!|\zeta|^N,
\qquad
N\geq1,\quad \zeta\in\mathcal C_R.
\end{equation}

Then the Borel transform
\begin{equation}\label{eq:Borel-transform}
\mathcal BF(\tau)
:=
\sum_{m\geq0}\frac{a_m}{m!}\tau^m
\end{equation}
converges for $|\tau|<\sigma^{-1}$ and admits an analytic continuation
to the strip-like region
\begin{equation}\label{eq:Ssigma}
\mathcal S_\sigma
:=
\left\{
\tau\in\mathbb C:
\operatorname{dist}(\tau,\mathbb R_+)<\sigma^{-1}
\right\}.
\end{equation}

For every $\sigma'>\sigma$, there exists $K_{\sigma'}>0$ such that
\begin{equation}\label{eq:Borel-growth}
|\mathcal BF(\tau)|
\leq
K_{\sigma'}
\exp\left(\frac{|\tau|}{R}\right),
\qquad
\tau\in\mathcal S_{\sigma'}.
\end{equation}

Moreover, for every $\zeta\in\mathcal C_R$,
\begin{equation}\label{eq:Borel-Laplace}
F(\zeta)
=
\frac{1}{\zeta}
\int_0^\infty
e^{-\tau/\zeta}\mathcal BF(\tau)\,d\tau.
\end{equation}

Conversely, suppose that $B$ is analytic in some
$\mathcal S_\sigma$ and, for every $\sigma'>\sigma$, satisfies
\begin{equation}\label{eq:Borel-growth-converse}
|B(\tau)|
\leq
K_{\sigma'}
\exp\left(\frac{|\tau|}{R}\right),
\qquad
\tau\in\mathcal S_{\sigma'}.
\end{equation}

Then
\begin{equation}\label{eq:Laplace-converse}
F(\zeta)
:=
\frac{1}{\zeta}
\int_0^\infty e^{-\tau/\zeta}B(\tau)\,d\tau
\end{equation}
is analytic in $\mathcal C_R$ and satisfies
\eqref{eq:Sokal-remainder}, uniformly on every
$\mathcal C_{R'}$ with $0<R'<R$, with
$$
a_m=B^{(m)}(0).
$$
\end{theorem}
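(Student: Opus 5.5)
The plan follows Sokal's original argument: first control the coefficients, then build the Borel transform as an inverse Laplace transform along the boundary circles of $\mathcal C_{R'}$, and finally verify the Laplace representation and the converse directly.

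\textbf{Coefficient bound.} Subtracting the remainder estimate \eqref{eq:Sokal-remainder} at orders $N$ and $N+1$ gives
$$|a_N||\zeta|^N\le A\sigma^N N!|\zeta|^N+A\sigma^{N+1}(N+1)!|\zeta|^{N+1}.$$
Letting $\zeta\to0$ inside $\mathcal C_R$ (for instance along the real segment $(0,R)$) yields $|a_N|\le A\sigma^N N!$. Hence the Borel series \eqref{eq:Borel-transform} converges for $|\tau|<\sigma^{-1}$.

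\textbf{Continuation of the Borel transform.} For $0<R'<R$ and every $N\ge1$, I define
$$B(\tau):=\sum_{k<N}\frac{a_k\tau^k}{k!}+\frac{\tau^{N-1}}{(N-1)!}\,\frac{1}{2\pi i}\oint_{\operatorname{Re}\zeta^{-1}=R'^{-1}}e^{\tau/\zeta}\,R_N(\zeta)\,\frac{d\zeta}{\zeta^{N+1}},\qquad R_N:=F-\sum_{k<N}a_k\zeta^k.$$
\begin{itemize}
\item \emph{Independence of $N$.} The formula does not depend on $N$, because of the elementary identity $\frac{1}{2\pi i}\oint e^{\tau/\zeta}\zeta^{-k-1}\,d\zeta=\tau^k/k!$ on these circles. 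Near $0$ it therefore reproduces the Borel series.
\item \emph{Change of variables.} Put $w=1/\zeta$, so the contour becomes the vertical line $\operatorname{Re}w=R'^{-1}$. Then $|e^{\tau w}|=e^{\operatorname{Re}(\tau)/R'-\operatorname{Im}(\tau)\operatorname{Im}(w)}$.
\item \emph{Bounding the integral.} Using $|R_N(\zeta)|\le A\sigma^N N!|\zeta|^N$ leaves an integrand of size $|w|^{-1}$ times an exponential in $\operatorname{Im}w$. This is the main obstacle: the integral is only conditionally convergent, and a nonzero $\operatorname{Im}\tau$ produces exponential growth in $|\operatorname{Im}w|$.
\item \emph{Handling the obstacle.} I would shift the vertical line, equivalently tilt the ray $\tau\mapsto\tau e^{i\theta}$, and optimize $N$ against $|\operatorname{Im}\tau|$ (Stirling, $N\sim$ const$/(\sigma|\operatorname{Im}\tau|)$). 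The tail gain $\sigma^N N!|w|^{-N-1}$ then beats $e^{|\operatorname{Im}\tau||\operatorname{Im}w|}$ precisely when $\operatorname{dist}(\tau,\R_+)<\sigma'^{-1}$.
\end{itemize}
This yields analyticity on $\mathcal S_\sigma$ and the bound $K_{\sigma'}e^{\operatorname{Re}\tau/R'}$. Letting $R'\uparrow R$ and absorbing the loss into $\sigma'$ gives \eqref{eq:Borel-growth}.

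\textbf{Laplace representation.} For $\zeta\in\mathcal C_R$, I insert the contour formula (with $N=1$, say) into $\zeta^{-1}\int_0^\infty e^{-\tau/\zeta}B(\tau)\,d\tau$. I then apply Fubini, which is justified because $\operatorname{Re}(\zeta^{-1})>R'^{-1}$ for $R'$ close to $R$. The $\tau$-integral $\int_0^\infty e^{-\tau(1/\zeta-1/\eta)}\,d\tau=(1/\zeta-1/\eta)^{-1}$ turns the contour integral into a Cauchy integral, and this returns $F(\zeta)$, proving \eqref{eq:Borel-Laplace}.

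\textbf{Converse.} The growth bound \eqref{eq:Borel-growth-converse} makes the integral \eqref{eq:Laplace-converse} converge locally uniformly on $\mathcal C_R$ (where $\operatorname{Re}\zeta^{-1}>R^{-1}$), so it defines an analytic function there.
\begin{itemize}
\item \emph{Derivative bounds.} Cauchy estimates on discs of radius $\sigma'^{-1}-\sigma''^{-1}$ inside the strip give $|B^{(N)}(\tau)|\le K\,\tilde\sigma^N N!\,e^{|\tau|/R}$ for $\tau\ge0$.
\item \emph{Remainder formula.} Integrating by parts $N$ times gives
$$F(\zeta)=\sum_{m<N}B^{(m)}(0)\,\zeta^m+\zeta^{N}\int_0^\infty e^{-\tau/\zeta}B^{(N)}(\tau)\,\frac{d\tau}{\zeta}.$$
\item \emph{Remainder estimate.} On $\mathcal C_{R'}$ with $R'<R$ we have $\operatorname{Re}(1/\zeta)-1/R\ge 1/R'-1/R>0$. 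Hence the last integral is bounded by $K\tilde\sigma^N N!\,|\zeta|^{-1}(1/R'-1/R)^{-1}$, and $|\zeta|^{-1}$ is harmless after adjusting constants, since $|\zeta|<R'$ gives $|\zeta|^N\le R'|\zeta|^{N-1}$, i.e.\ a shift of $N$ by one.
\end{itemize}
This gives \eqref{eq:Sokal-remainder} uniformly on $\mathcal C_{R'}$, with $a_m=B^{(m)}(0)$.
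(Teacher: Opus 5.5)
The paper gives no proof of this statement; it is recalled from Sokal. Your proposal therefore has to stand on its own, and it breaks at the step that carries the content: continuing $\mathcal BF$ to $\mathcal S_\sigma$.

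\textbf{The contour representation is wrong.} The correct identity is $\frac{1}{2\pi i}\oint_{\partial\mathcal C_r}e^{\tau/\zeta}\zeta^{k-1}\,d\zeta=\tau^k/k!$, not $\zeta^{-k-1}$. Hence for real $t\ge0$ one has $\mathcal BF(t)=\sum_{k<N}a_kt^k/k!+\frac{1}{2\pi i}\oint_{\partial\mathcal C_r}e^{t/\zeta}R_N(\zeta)\,\frac{d\zeta}{\zeta}$. Your version, with prefactor $\tau^{N-1}/(N-1)!$ and kernel $\zeta^{-N-1}$, confuses $\mathcal BF$ with its $N$-th derivative. For $F(\zeta)=\zeta$ it returns $1$ at $N=1$ but $\tau$ at $N=2$, so it is neither correct nor independent of $N$.

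With the correct kernel the integrand is $O(|w|^{-N-1})$ on $\operatorname{Re}w=1/r$. So the conditional convergence you call the main obstacle is an artefact of the wrong formula. The same correction is needed in your Laplace step: with your $N=1$ kernel the integrand decays only like $|w|^{-1}$, Fubini is unavailable, and the formal computation returns $a_0+R_1(\zeta)/\zeta$ rather than $F(\zeta)$.

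\textbf{The mechanism for non-real $\tau$ cannot work.} If $\operatorname{Im}\tau\neq0$, then $|e^{\tau w}|=e^{\operatorname{Re}\tau\operatorname{Re}w-\operatorname{Im}\tau\operatorname{Im}w}$ grows exponentially along one end of any admissible contour.
\begin{itemize}
\item No fixed power $|w|^{-N-1}$ beats that growth.
\item $N$ cannot vary along a single contour integral.
\item Tilting the line would leave the half-plane $\operatorname{Re}w>1/R$, where nothing is known about $F$. Avoiding such a sector hypothesis is exactly what separates Sokal's theorem from Watson's.
\end{itemize}

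\textbf{The missing idea: use the contour formula only on $[0,\infty)$ and continue by Taylor series.}
\begin{itemize}
\item Take the formula at order $N+1$ and differentiate $N$ times. This gives $(\mathcal BF)^{(N)}(t)=a_N+\frac{1}{2\pi i}\oint_{\partial\mathcal C_r}e^{t/\zeta}R_{N+1}(\zeta)\,\zeta^{-N-1}d\zeta$, whose integrand is bounded by $A\sigma^{N+1}(N+1)!\,e^{t/r}|w|^{-2}$.
\item Since $\frac{1}{2\pi}\int_{\operatorname{Re}w=1/r}|w|^{-2}|dw|=r/2$, letting $r\uparrow R$ gives $|(\mathcal BF)^{(N)}(t)|\le A\sigma^NN!+\tfrac R2A\sigma^{N+1}(N+1)!\,e^{t/R}$.
\item At $t=0$, shifting the line to $+\infty$ gives $(\mathcal BF)^{(N)}(0)=a_N$. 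So this function agrees with the Borel series.
\item The Taylor series at every $t\ge0$ therefore has radius at least $1/\sigma$. Overlapping discs meet on the real axis, so the expansions glue to an analytic function on their union, which is exactly $\mathcal S_\sigma$.
\item Summing the series at a $t\ge0$ with $|\tau-t|<1/\sigma'$ and $t\le|\tau|+1/\sigma'$ gives $K_{\sigma'}e^{|\tau|/R}$ directly. There is no loss to ``absorb into $\sigma'$''; an excess $e^{\operatorname{Re}\tau(1/R'-1/R)}$ could not be absorbed that way anyway.
\end{itemize}

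\textbf{What is fine.} Your coefficient bound is correct. The converse is essentially correct, with one fix: remove the factor $|\zeta|^{-1}$ by writing $R_N=B^{(N)}(0)\zeta^N+R_{N+1}$ and applying your order-$(N+1)$ bound. The inequality $|\zeta|^N\le R'|\zeta|^{N-1}$ you cite points the wrong way.
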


We now impose the assumptions needed for the Borel summability result.

\begin{assumption}\label{Borel}
Fix $t>0$ and $\phi\in\mathbb R^d$. We assume that:
\begin{enumerate}
    \item $V$ is the positive-definite quadratic form
    \begin{equation}\label{eq:quadratic-V}
    V(\varphi)
    =
    \left\langle
    \lambda^{(2)},\varphi^{\otimes2}
    \right\rangle_2,
    \qquad \varphi\in\mathbb R^d;
    \end{equation}

    \item $P$ is a real polynomial of degree $k$, written as
    \begin{equation}\label{eq:P-polynomial}
    P(x)=\sum_{j=0}^k p_jx^j;
    \end{equation}

    \item The jump distribution $r$ in the Lévy symbol is
    compactly supported.
\end{enumerate}
\end{assumption}

We would like to introduce a general initial condition for this part. Let $\chi$ be a smooth function defined by
$\chi\in C^\infty(\mathbb R)$ such that
$$
\chi(x)=0\quad\text{for }x\leq-2,
\qquad
\chi(x)=1\quad\text{for }x\geq-1,
$$
and define for $x\in\R$ 
\begin{equation}\label{eq:smoothed-f}
f(x):=\chi(x)P(x)e^{-x}.
\end{equation}

Then $f\in\mathcal S(\mathbb R)$. Therefore, our initial data is of the form

\begin{equation}\label{newinitcond}
f\bigl(\beta V(\varphi)\bigr)
=
P\bigl(\beta V(\varphi)\bigr)e^{-\beta V(\varphi)},\quad \phi\in\R\quad \beta>0.
\end{equation}

For $\zeta\in\mathcal C_\infty$, we define
\begin{equation}\label{H}
M(\zeta)
:=
\left\langle
e^{-\zeta V_\phi}
\right\rangle_{\nu_t},
\end{equation}
and
\begin{equation}\label{M}
H(\zeta)
:=
\left\langle
P(\zeta V_\phi)e^{-\zeta V_\phi}
\right\rangle_{\nu_t}.
\end{equation}

Under Assumption \ref{Borel}, the Borel summability of $M$ was
proved in \cite[Theorem~2.21]{main}. More precisely, $M$ admits the
asymptotic expansion
\begin{equation}\label{eq:M-expansion}
M(\zeta)
\sim
\sum_{n\geq0}a_n\zeta^n,
\qquad
a_n
=
\frac{(-1)^n}{n!}
\left\langle V_\phi^n\right\rangle_{\nu_t}.
\end{equation}

Its Borel transform is
\begin{equation}\label{eq:BM}
\mathcal BM(\tau)
=
\sum_{n\geq0}\frac{a_n}{n!}\tau^n
=
\sum_{n\geq0}
\frac{(-\tau)^n}{(n!)^2}
\left\langle V_\phi^n\right\rangle_{\nu_t}.
\end{equation}

It admits an analytic continuation to a strip-like neighbourhood of
$\mathbb R_+$ and satisfies the exponential-growth estimates required
by Theorem \ref{Watson-Sokal}.

We now extend this result to our initial condition \eqref{newinitcond}.

\begin{theorem}\label{polyexpborel}
Under Assumption \ref{Borel}, the function $H$ is analytic in
$\mathcal C_\infty$ and satisfies
\begin{equation}\label{eq:H-from-M}
H(\zeta)
=
\sum_{j=0}^k
p_j(-1)^j\zeta^jM^{(j)}(\zeta).
\end{equation}

Moreover, its asymptotic expansion
\begin{equation}\label{eq:H-expansion}
H(\zeta)\sim\sum_{n\geq0}h_n\zeta^n
\end{equation}
is Borel summable, where
\begin{equation}\label{eq:H-coefficients}
h_n
=
\frac{f^{(n)}(0)}{n!}
\left\langle V_\phi^n\right\rangle_{\nu_t}.
\end{equation}

The corresponding Borel transform is
\begin{equation}\label{eq:BH}
\mathcal BH(\tau)
=
\sum_{j=0}^k
p_j(-1)^j\tau^j
\bigl(\mathcal BM\bigr)^{(j)}(\tau).
\end{equation}

Finally,
\begin{equation}\label{eq:H-Borel-Laplace}
H(\zeta)
=
\frac{1}{\zeta}
\int_0^\infty
e^{-\tau/\zeta}
\mathcal BH(\tau)\,d\tau,
\qquad
\zeta\in\mathcal C_\infty.
\end{equation}
\end{theorem}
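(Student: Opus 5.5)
Throughout, $\nu_t$ is the probability measure of Definition \ref{def:nu-pairing} and $V_\phi\geq0$. I take $\mathcal C_\infty$, as in \eqref{eq:Cinfty}, to be the right half-plane.

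\textbf{Analyticity and the identity \eqref{eq:H-from-M}.} For $\zeta\in\mathcal C_\infty$ we have $|e^{-\zeta V_\phi}|=e^{-\Re\zeta\, V_\phi}\leq1$. Moreover, $V_\phi^j e^{-\Re\zeta\,V_\phi}$ is bounded by a constant depending only on a lower bound for $\Re\zeta$. Hence differentiation under the integral sign is justified locally uniformly by dominated convergence, so $M$ is analytic on $\mathcal C_\infty$ with
\[
M^{(j)}(\zeta)=(-1)^j\langle V_\phi^j e^{-\zeta V_\phi}\rangle_{\nu_t}.
\]
Writing $P(\zeta V_\phi)=\sum_j p_j\zeta^jV_\phi^j$ and using linearity of the pairing gives
\[
H(\zeta)=\sum_j p_j\zeta^j\langle V_\phi^je^{-\zeta V_\phi}\rangle_{\nu_t}=\sum_j p_j(-1)^j\zeta^jM^{(j)}(\zeta).
\]
This is \eqref{eq:H-from-M}, and it shows that $H$ is analytic on $\mathcal C_\infty$.

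\textbf{Borel transform.} The plan is to work on the Borel side with the converse half of Theorem \ref{Watson-Sokal}. By \cite[Theorem~2.21]{main}, $\mathcal BM$ is analytic in some $\mathcal S_\sigma$ and has exponential growth $K_{\sigma'}e^{|\tau|/R}$ on each $\mathcal S_{\sigma'}$, for every $R>0$ in the sense used there. Define $B(\tau):=\sum_j p_j(-1)^j\tau^j(\mathcal BM)^{(j)}(\tau)$.

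\emph{Analyticity and growth of $B$.} Cauchy estimates on discs of radius $\delta=\tfrac12(\sigma^{-1}-\sigma'^{-1})$ show that $(\mathcal BM)^{(j)}$ is bounded by $j!\,\delta^{-j}K_{\sigma''}e^{(|\tau|+\delta)/R}$ on $\mathcal S_{\sigma'}$. Multiplying by the polynomial weight $|\tau|^j$ costs an arbitrarily small increase of the exponential rate. So $B$ satisfies \eqref{eq:Borel-growth-converse} for every $R'<R$, which suffices since $R$ is arbitrary.

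\emph{Laplace transform of $B$.} Take the Laplace representation \eqref{eq:Borel-Laplace} of $M$, written as $\zeta M(\zeta)=\int_0^\infty e^{-\tau/\zeta}\mathcal BM(\tau)\,d\tau$. I would show that $\zeta^j M^{(j)}(\zeta)=\zeta^{-1}\int_0^\infty e^{-\tau/\zeta}\tau^j(\mathcal BM)^{(j)}(\tau)\,d\tau$, and the cleanest route is via coefficients. The term $\tau^j(\mathcal BM)^{(j)}$ has Taylor coefficients $\frac{a_n}{n!}\,n(n-1)\cdots(n-j+1)$. The term $\zeta^jM^{(j)}$ has formal coefficients $a_n\,n(n-1)\cdots(n-j+1)$. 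These correspond under the map $a\mapsto a/n!$, so the Borel transform commutes with the operator $\zeta^j\partial_\zeta^j$.

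\emph{Identifying the two functions.} Both $\zeta^jM^{(j)}$ and the Laplace integral of $\tau^j(\mathcal BM)^{(j)}$ are analytic on $\mathcal C_R$ and satisfy Sokal remainder bounds there. For the former this holds because derivatives of functions with Sokal bounds again have such bounds on slightly smaller $\mathcal C_{R'}$, by Cauchy estimates in the disc geometry. For the latter it is the converse of Theorem \ref{Watson-Sokal}. The two functions share the same asymptotic series, so the uniqueness part of Watson--Sokal shows they coincide. Linear combination then yields \eqref{eq:BH} and \eqref{eq:H-Borel-Laplace} on each $\mathcal C_R$, hence on $\mathcal C_\infty$.

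\textbf{Coefficients \eqref{eq:H-coefficients}.} It remains to check that the asymptotic coefficients of $H$ are $h_n=\sum_j p_j(-1)^j\,n(n-1)\cdots(n-j+1)\,a_n$. Substituting $a_n=(-1)^n\langle V_\phi^n\rangle/n!$ gives
\[
h_n=\frac{\langle V_\phi^n\rangle}{n!}\sum_j p_j(-1)^{n-j}\frac{n!}{(n-j)!}.
\]
By the Leibniz rule, $f^{(n)}(0)=(Pe^{-x})^{(n)}(0)=\sum_j\binom nj j!\,p_j(-1)^{n-j}$, since $\chi\equiv1$ near $0$. These two expressions agree.

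\textbf{Main obstacle.} I expect the hardest step to be transferring the Sokal remainder bound from $M$ to $\zeta^jM^{(j)}$ uniformly on $\mathcal C_R$. The disc $\mathcal C_R$ touches $0$, so Cauchy estimates near $\zeta=0$ need care. I would first try to avoid this entirely: apply the converse half of Theorem \ref{Watson-Sokal} directly to $B$, and identify the resulting Laplace transform with $H$ via the convergent Taylor expansion of $\mathcal BM$ near $0$ together with integration by parts $j$ times in $\tau$. The boundary terms in this integration vanish because of the factor $\tau^j$ at $0$ and the exponential decay at $\infty$.
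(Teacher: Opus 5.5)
Your proposal is correct, but the identification of $H$ with the Laplace integral of $\mathcal BH$ goes by a different route from the paper's.

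\emph{The paper's route.} It never needs a remainder estimate for $\zeta^jM^{(j)}$. For real $\beta\in(0,R)$ it writes $M(\beta)=\int_0^\infty e^{-s}\mathcal BM(\beta s)\,ds$. It then differentiates $j$ times under the integral, justified by the same Cauchy-estimate bounds on $(\mathcal BM)^{(j)}$ that you derive. This gives $\beta^jM^{(j)}(\beta)=\int_0^\infty e^{-s}(\beta s)^j(\mathcal BM)^{(j)}(\beta s)\,ds$. These identities are combined via \eqref{eq:H-from-M} and extended from $(0,R)$ to $\mathcal C_R$ by the identity theorem. The Sokal bound for $H$ then comes for free from the converse half of Theorem \ref{Watson-Sokal}.

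\emph{Your route.} You prove the formal intertwining $\mathcal B\circ\zeta^j\partial_\zeta^j=\tau^j\partial_\tau^j\circ\mathcal B$ and conclude by Watson uniqueness. This costs an extra closure property: a Sokal bound for $\zeta^jM^{(j)}$ on some $\mathcal C_{R'}$. That property holds, but your Cauchy-estimate sketch needs the bookkeeping you anticipated. In $w=1/\zeta$ the discs become half-planes at fixed distance, so the admissible radius at $\zeta$ is only of order $|\zeta|^2$. Taking $j$ derivatives loses $|\zeta|^{-2j}$, so you must peel off $j$ extra Taylor terms (using $(N+j)!\le 2^{N+j}j!\,N!$) to recover $|\zeta|^N$. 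A simpler fix is to apply the Taylor remainder directly to $\langle V_\phi^je^{-\zeta V_\phi}\rangle_{\nu_t}$ with the moment bound, exactly as for $M$.

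\emph{Fallback and trade-off.} Your integration-by-parts fallback can be made to work, but the local Taylor series of $\mathcal BM$ near $0$ plays no role in controlling an integral over $[0,\infty)$. What is really needed is differentiation of the Laplace integral under the integral sign. Once you do that, the paper's substitution $\tau=\beta s$ makes the identity immediate. The paper's route is shorter and sidesteps the tangency of $\mathcal C_R$ at $0$. Yours makes the operator intertwining explicit.

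\emph{One point to shore up.} You take growth of $\mathcal BM$ at every rate $e^{|\tau|/R}$ from the cited theorem, whereas the paper proves it. Compact jump support gives $A=\langle e^{c\sqrt{V_\phi}}\rangle_{\nu_t}<\infty$, hence $\langle V_\phi^N\rangle_{\nu_t}\le A\sigma^N(N!)^2$. That yields a Sokal bound for $M$ on all of $\mathcal C_\infty$ with $R$-independent constants. This uniformity is exactly what makes \eqref{eq:H-Borel-Laplace} hold on all of $\mathcal C_\infty$ rather than on one disc, so include that short argument instead of relying on the citation.
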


\begin{proof}{
We differentiate the integral in \eqref{M} with respect to
$\zeta$. This gives
\begin{equation}
M^{(j)}(\zeta)
=
(-1)^j\langle V_\phi^j e^{-\zeta V_\phi}\rangle_{\nu_t},
\end{equation}
and hence \eqref{eq:H-from-M} and the analyticity of $H$.
Using \eqref{eq:M-expansion}, the formal coefficient of $\zeta^n$
in \eqref{eq:H-from-M} is
\begin{equation}
a_n\sum_{j=0}^{\min\{k,n\}}
p_j(-1)^j\frac{n!}{(n-j)!}
=
\langle V_\phi^n\rangle_{\nu_t}
\sum_{j=0}^{\min\{k,n\}}
\frac{p_j(-1)^{n-j}}{(n-j)!}
=
\frac{f^{(n)}(0)}{n!}
\langle V_\phi^n\rangle_{\nu_t}.
\end{equation}

This gives \eqref{eq:H-coefficients} at the level of formal
coefficients. Their Borel transform satisfies \eqref{eq:BH}. It remains to verify the analytic continuation and growth properties
of $\mathcal BH$. By Assumptions \ref{Borel}, compact support of the jump distribution gives
finite exponential moments of linear growth for $\nu_t$. Since $\sqrt{V_\phi}$ has at most
linear growth, we can choose $c>0$ such that
\begin{equation}
A:=\langle e^{c\sqrt{V_\phi}}\rangle_{\nu_t}<\infty.
\end{equation}

It follows for every $N\geq1$,
\begin{equation}
\langle V_\phi^N\rangle_{\nu_t}
\leq Ac^{-2N}(2N)!
\leq A\sigma^N(N!)^2,\quad \sigma=4c^{-2}.
\end{equation}

Taylor's remainder for $e^{-\zeta V_\phi}$ now gives
\begin{equation}
\left|M(\zeta)-\sum_{n=0}^{N-1}a_n\zeta^n\right|
\leq A\sigma^N N!|\zeta|^N,
\qquad \zeta\in\mathcal C_\infty.
\end{equation}

The constants $A$ and $\sigma$ do not depend on $R$. Thus, for
every $\widetilde R>0$ and $\widetilde\sigma>\sigma$,
Theorem \ref{Watson-Sokal} gives
\begin{equation}
|\mathcal BM(w)|
\leq K_{\widetilde\sigma,\widetilde R}
e^{|w|/\widetilde R},
\qquad w\in\mathcal S_{\widetilde\sigma}.
\end{equation}

Now, let $R>0$ be fixed. We choose $\widetilde R>R$, and let
$\sigma<\widetilde\sigma<\sigma'$. Cauchy's estimates bound the
derivatives of $\mathcal BM$ on $\mathcal S_{\sigma'}$ using the
preceding bound on $\mathcal S_{\widetilde\sigma}$. The sum in
\eqref{eq:BH} is finite, and its polynomial factors $\tau^j$ can
be absorbed into $e^{|\tau|/R}$. Therefore $\mathcal BH$ is
analytic in $\mathcal S_\sigma$ and satisfies
\begin{equation}\label{bound}
|\mathcal BH(\tau)|
\leq C_{\sigma',R}e^{|\tau|/R},
\qquad \tau\in\mathcal S_{\sigma'}.
\end{equation}

For $\beta\in(0,R)$, we use the Borel representation
\begin{equation}
M(\beta)=\int_0^\infty e^{-s}\mathcal BM(\beta s)\,ds.
\end{equation}

The bounds in \eqref{bound} allow differentiation under
the integral. Equation \eqref{eq:H-from-M} then gives
\begin{equation}
H(\beta)
=
\int_0^\infty e^{-s}\mathcal BH(\beta s)\,ds
=
\frac{1}{\beta}\int_0^\infty
e^{-\tau/\beta}\mathcal BH(\tau)\,d\tau.
\end{equation}

Both sides are analytic in $\mathcal C_R$, hence by Theorem 3.2.6 in \cite{ablowitz}, we can
extend the equality to $\mathcal C_R$. Since $R$ is arbitrary,
\eqref{eq:H-Borel-Laplace} holds throughout $\mathcal C_\infty$.

Finally, the converse part of Theorem \ref{Watson-Sokal} gives
a factorial remainder estimate for this Borel--Laplace integral
on every $\mathcal C_{R_0}$, where $R_0<R$. Its coefficients are
the coefficients calculated above. Hence \eqref{eq:H-expansion}
is Borel summable to $H$.
}\end{proof}

\section{Remarks on Some Applications}

The results in \cite{main} were established specifically for exponential initial conditions, which represent a special case of the broader class considered in this study. Consequently, the work of \cite{main} can be viewed as a more restrictive instance of our generalized approach, and its expansions are recovered in this current work. Accordingly, we establish the following theorem.

\begin{theorem}
Suppose that $V$ is bounded below, in addition to Assumption \ref{assumptions}(2). For the exponential initial condition $e^{-\beta V}$, $0<\beta\leq1$, both the analytical and the graphical asymptotic expansions of the solution of the Cauchy problem studied in \cite{main} are recovered.
\end{theorem}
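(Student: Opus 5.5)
The plan is to realize $e^{-\beta V}$ as a legitimate instance of the framework and then specialize every formula. Since $e^{-x}$ is not Schwartz on $\R$, I will use the smoothed function \eqref{eq:smoothed-f} with $P\equiv1$, namely $f(x)=\chi(x)e^{-x}$. This $f$ lies in $\mathcal S(\R)$ and agrees with $e^{-x}$ on $[-1,\infty)$. If $V\ge -c_0$, then for $0<\beta\le1$ with $\beta c_0\le 1$ we have $\beta V(\phi)\ge-1$ for all $\phi$, so $f(\beta V)=e^{-\beta V}$ identically on $\R^d$. A general lower bound $c_0$ is handled by first shifting $\chi$, i.e. taking $\chi=1$ on $[-c_0,\infty)$; all that matters is that $f=e^{-x}$ on a half-line containing $\beta V(\R^d)$ for every $\beta\in(0,1]$. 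Consequently, Theorems \ref{fschwartz} and \ref{existuniq} apply, and the unique solution $\Phi_t^\beta=\nu_t*e^{-\beta V}$ coincides with the solution constructed in \cite{main}.

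Second, I compute the coefficients. Near $0$ we have $f(x)=e^{-x}$, so $f^{(m)}(0)=(-1)^m$. Substituting into Theorem \ref{expansion} gives
\[
\Phi_t^\beta(\phi)\sim\sum_{m\ge0}\frac{(-\beta)^m}{m!}\langle V_\phi^m\rangle_{\nu_t},
\]
which is the moment expansion of \cite{main}, i.e. the coefficients $a_n$ in \eqref{eq:M-expansion}. Theorem \ref{expansion2} and Theorem \ref{graphexpansion} then produce
\[
\sum_m\frac{(-\beta)^m}{m!}\sum_{G\in\mathcal F(m)}V[G](t,\phi).
\]
I will check that the graph class $\mathcal F(m)$ and the Feynman rules of Definition \ref{feynman} match those of \cite{main} via the bijection $(K,I)\leftrightarrow G$ of \cite[Lemma 2.9]{main}. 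In particular, the sign $(-1)^m$ should be attributable either to the inner full vertices (the factor $-\lambda^{(p)}$ in \cite{main}) or to the prefactor. I also need to reconcile the conventions $t\,l!\,C_J$ and $+\phi$ with those in \cite{main}, and this reconciliation is pure bookkeeping.

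Third, for the logarithm, $f(0)=1$ and $\log f(x)=-x$ near $0$, so $\kappa_1=-1$ and $\kappa_r=0$ for $r\ge2$. In Theorem \ref{logarithm} only $r_a=1$ survives, so $k=n$ and
\[
\gamma_n=\frac{(-1)^n}{n!}\langle V_\phi,\dots,V_\phi\rangle^T_{\nu_t}.
\]
In Theorem \ref{logexpgraphthm}, $w(\pi)=0$ unless $\pi=\hat0$, in which case $w(\hat 0)=(-1)^n/n!$. Since $\sigma(G)\vee\hat0=\sigma(G)$, the condition becomes $\sigma(G)=\hat1$. This gives the connected-graph expansion
\[
\log\Phi_t^\beta\sim\sum_n\frac{(-\beta)^n}{n!}\sum_{G\in\mathcal F(n),\,\sigma(G)=\hat1}V[G],
\]
consistent with Proposition \ref{truncated-to-graph} and with \cite{main}. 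Graphically, no triangular vertices with more than one leg appear.

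The main obstacle is the first step: ensuring that the Schwartz modification of $e^{-x}$ does not alter the solution itself, not merely its germ at $\beta=0$. This is exactly where the lower bound on $V$ and the restriction $\beta\le1$ enter. Without them, $f(\beta V)\neq e^{-\beta V}$ on a region, and the recovered object would differ from that of \cite{main} by a term which, while asymptotically negligible, is not identically zero. A secondary point is matching the graph conventions precisely, including the signs and the factorials $l!$.
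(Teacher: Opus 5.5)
Your proposal is correct and matches the paper's proof. Like the paper, you move the cutoff $\chi$ so that $\chi=1$ on a half-line containing $\beta V(\R^d)$ for all $\beta\in(0,1]$ (the paper takes $[\min\{0,\inf V\}-1,\infty)$, which also contains a neighbourhood of $0$), read off $f^{(m)}(0)=(-1)^m$, and substitute into Theorems \ref{expansion} and \ref{graphexpansion}. Your logarithm computation with $\kappa_1=-1$ and $\kappa_r=0$ for $r\ge2$ is exactly what the paper does in the theorem that immediately follows this one.
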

\begin{proof}
    We use the same construction of \eqref{eq:smoothed-f} with $P\equiv1$, choosing $\chi=1$ on $[\min\{0,\inf V\}-1,\infty)$ and $\chi=0$ on $(-\infty,\min\{0,\inf V\}-2]$. Then $f(x)=\chi(x)e^{-x}\in\mathcal S(\R)$, $f(\beta V)=e^{-\beta V}$ for $0<\beta\leq1$, and $f^{(n)}(0)=(-1)^n$. We obtain from Theorem \ref{expansion}

    \begin{equation}
        \Phi_t{ ^\beta}(\phi){ \mathrel{\sim}}\sum_{m=0}^\infty \frac{(-1)^m{ \beta^m}}{m!}\langle V^m_\phi\rangle_{\nu_t}, \qquad t\geq0,\quad \phi\in\R^d,
    \end{equation}
and from Theorem \ref{graphexpansion}{ , both as $\beta\downarrow0$,} 

\begin{equation}
    \Phi_t{ ^\beta}(\phi){ \mathrel{\sim}}\sum_{m=0}^\infty \frac{(-1)^m{ \beta^m}}{m!}\sum_{G\in \mathcal F(m)} V[G](t,\phi),\qquad t\geq0,\quad \phi\in\R^d,
\end{equation}

\end{proof}

We now characterize the contributing graphs in the formal logarithm of the solution $\Phi_t{ ^\beta}$ studied in \cite{main}. 

\begin{theorem} The only partition $\pi\in\mathcal P([n])$ giving a nonzero contribution is $\pi=\hat{0}$, i.e, the only graphs that contribute to the logarithmic expansion are connected graphs. 
\end{theorem}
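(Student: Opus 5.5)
The plan is to specialize Theorem \ref{logexpgraphthm} to the exponential initial condition and read off which weights $w(\pi)$ survive. As in the preceding theorem, we realize the exponential data through \eqref{eq:smoothed-f} with $P\equiv1$, so that $f(x)=\chi(x)e^{-x}$ agrees with $e^{-x}$ on a neighbourhood of $0$. The hypothesis $f(0)=1$ of Theorems \ref{logarithm} and \ref{logexpgraphthm} then holds. Moreover, since only the Taylor series of $f$ at $0$ enters the formal logarithm in \eqref{def-K}, the formal series $K(z)=\log f(z)$ coincides with the Taylor series of $\log e^{-z}=-z$.

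The key computation is therefore
\begin{equation}
\kappa_1=(\log f)'(0)=-1,\qquad \kappa_r=(\log f)^{(r)}(0)=0\quad (r\geq2).
\end{equation}
Substituting into $w(\pi)=\frac{1}{n!}\prod_{B\in\pi}\kappa_{|B|}$ shows that $w(\pi)=0$ whenever $\pi$ has a block with $|B|\geq2$. The only partition with all blocks singletons is the minimal element $\hat0=\{\{1\},\dots,\{n\}\}$ of $\mathcal P([n])$, and for it
\begin{equation}
w(\hat0)=\frac{(-1)^n}{n!}.
\end{equation}
This proves the first claim: only $\pi=\hat0$ contributes.

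For the graphical characterization, observe that $\hat0$ is the bottom element of the partition lattice. Hence $\sigma(G)\vee\hat0=\sigma(G)$ for every $G\in\mathcal F(n)$, so the constraint $\sigma(G)\vee\pi=\hat1$ in \eqref{logexpgrapheq} becomes $\sigma(G)=\hat1$. By the definition of $\sigma$-connectedness, this means all inner full vertices lie in one equivalence class of $\sim_G$, that is, $G$ is connected. We conclude
\begin{equation}
\log\Phi_t^\beta(\phi)=\sum_{n\geq1}\frac{(-\beta)^n}{n!}\sum_{\substack{G\in\mathcal F(n)\\ \sigma(G)=\hat1}}V[G](t,\phi),
\end{equation}
which is the connected-graph expansion of \cite{main}. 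As a consistency check, this also follows from Proposition \ref{truncated-to-graph} together with Lemma \ref{exp-linked} applied with $s=-\beta$ and $P=V_\phi$.

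The only delicate point is justifying that the smoothing cutoff $\chi$ does not affect the $\kappa_r$. This holds because all constructions in Section 5 are formal power series in $\beta$ and depend only on the germ of $f$ at $0$, where $\chi\equiv1$. Graphically, in Definition \ref{feynman2} every triangular vertex is then attached to a single full vertex and carries the factor $-1$. No extra connections are created, which explains why disconnected graphs no longer contribute.
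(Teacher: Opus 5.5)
Your proposal is correct and follows essentially the same route as the paper's proof. Both compute $\kappa_1=-1$ and $\kappa_r=0$ for $r\geq2$, deduce that the weight vanishes unless $\pi=\hat0$, and then specialize Theorem \ref{logexpgraphthm} to the condition $\sigma(G)\vee\hat0=\sigma(G)=\hat1$. Your extra remarks (the cutoff $\chi$ not affecting the germ of $f$ at $0$, and $\hat0$ being the bottom element of the lattice) only make explicit steps that the paper leaves implicit.
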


\begin{proof}

Since $f(z)=e^{-z}$ near zero, we have $\log f(z)=-z$, from which
\begin{equation}
\kappa_1=-1, \qquad \kappa_r=0 \quad \text{for all } r\ge2.
\end{equation}

Now let $\pi\in\mathcal P([n])$. If $\pi\neq\hat0$, then $\pi$ contains at least one block
$B$ with $|B|\ge2$. Since $\tilde w(\pi)=\prod_{B\in\pi}\kappa_{|B|}$, the factor
$\kappa_{|B|}$ appears in this product. Hence $\tilde w(\pi)=0$ for every
$\pi\neq\hat0$. Thus the only partition with nonzero contribution is $\pi=\hat0$.

Therefore, by Theorem \ref{logexpgraphthm}
\begin{equation}
\log\Phi_t{ ^\beta}(\phi)
=
\sum_{n\ge1}\frac{(-1)^n{ \beta^n}}{n!}\,\sum_{\substack{G\in\mathcal F(n)\\ \sigma(G)=\hat1}}V[G](t,\phi).
\end{equation}
\end{proof}

\begin{remark}
Borel summability can also be recovered from our current work. Consider the case when $P\equiv1$ in Assumptions \ref{Borel}, we have $H=M$ and
$\mathcal BH=\mathcal BM$ and therefore, Theorem \ref{polyexpborel} reduces to the exponential case proved in \cite[Theorem~2.21]{main}.
\end{remark}

We illustrate now a one-dimensional setting for which the convolution kernel and the corresponding solution can be represented explicitly. In this case, the transition kernel is a Poisson-weighted mixture of translated Gaussian densities. This representation provides both an exact benchmark for the theoretical expansion and a direct algorithm for its numerical evaluation.

Let $\phi, x\in\R$ and assume
\begin{equation}
V(\varphi)=\varphi^2,
\qquad
f_2(x)=(1+x)e^{-x}.
\end{equation}

For applying the preceding results, we take $f=\chi f_2$ with the cutoff in \eqref{eq:smoothed-f}. Since $V\geq0$ and $\beta>0$, this preserves the initial condition and every derivative at zero.

Assume that the symbol $\Psi$ of the kernel convolution $\nu_t$ is of the form 
\begin{equation}
\Psi(-i\xi)
=
-\frac{\sigma^2}{2}\xi^2
+
z\int_{\mathbb{R}\setminus\{0\}}\bigl(e^{iy\xi}-1\bigr)\,d\delta_a(y)
=
-\frac{\sigma^2}{2}\xi^2+z\bigl(e^{ia\xi}-1\bigr),\qquad \xi\in\R,
\end{equation}
where $\delta_a$,$\,a>0$, is the Dirac measure, $z>0$ is the intensity of the jumps, and $\sigma>0$. 

We then have by \eqref{nuhat}
\begin{equation}
\widehat{\nu_t}(\xi)=e^{t\Psi(-i\xi)}=
e^{-zt}\sum_{n=0}^{\infty}\frac{(zt)^n}{n!}\,e^{ian\xi}e^{-\frac{\sigma^2 t}{2}\xi^2}.
\end{equation}

Then, by the inverse Fourier transform, we obtain an explicit expression of the convolution kernel
\begin{equation}
\nu_t(x)=e^{-zt}\sum_{n=0}^{\infty}\frac{(zt)^n}{n!}\,g_{\sigma^2 t}(x-na),
\quad\text{ with }\quad
g_s(x)=\frac{1}{\sqrt{2\pi s}}e^{-x^2/(2s)}{ ,\quad s>0.}
\end{equation}

The exact solution is given by 
\begin{equation}\label{numexp}
\Phi_t^\beta(\varphi)=(\nu_t*f_{\beta})(\varphi),
\qquad
f_{\beta}(\varphi)=(1+\beta\varphi^2)e^{-\beta\varphi^2}.
\end{equation}

{
The exact convolution in \eqref{numexp} can be evaluated from the Gaussian mixture above. Its direct and logarithmic finite-order coefficients are given by Theorems \ref{graphexpansion} and \ref{logexpgraphthm}, respectively.
}

\section*{Acknowledgments}
This work is supported by the Interdisciplinary research Center for Intelligent Secure Sys-
tems at King Fahd University of Petroleum and Minerals under the project $\#IN SS2602$.\\
The authors gratefully acknowledge this support.

\end{document}